\numberwithin{equation}{section}
\numberwithin{figure}{section}
\theoremstyle{plain}
 \newtheorem{theorem}{Theorem}[section]
 \newtheorem{proposition}[theorem]{Proposition}
 \newtheorem{lemma}[theorem]{Lemma}
 \newtheorem{corollary}[theorem]{Corollary}
 \newtheorem{conjecture}[theorem]{Conjecture}
 \newtheorem{thmABC}{Theorem}
\theoremstyle{definition}
 \newtheorem{definition}[theorem]{Definition}
\theoremstyle{remark}
 \newtheorem{remark}[theorem]{Remark}
 \newtheorem*{acknowledgements}{Acknowledgements}
 \newtheorem{question}[theorem]{Question}
\newcommand{\mfg}{\mathfrak{g}}
\newcommand{\mfp}{\mathfrak{p}}
\newcommand{\mfP}{\mathfrak{P}}
\newcommand{\mfz}{\mathfrak{z}}
\newcommand{\eps}{\ensuremath{\epsilon}}
\newcommand{\nega}{{\rm neg}}
\newcommand{\udrl}{\underline}
\newcommand{\C}{\ensuremath{\mathbb{C}}}
\newcommand{\Fq}{\ensuremath{\mathbb{F}_q}}
\newcommand{\N}{\ensuremath{\mathbb{N}}}
\newcommand{\Q}{\ensuremath{\mathbb{Q}}}
\newcommand{\R}{\ensuremath{\mathbb{R}}}
\newcommand{\mcL}{\ensuremath{\mathcal{L}}}
\newcommand{\T}{\ensuremath{\mathcal{T}}}
\newcommand{\Z}{\ensuremath{\mathbb{Z}}}
\newcommand{\bfa}{\ensuremath{\mathbf{a}}}
\newcommand{\bfb}{\ensuremath{\mathbf{b}}}
\newcommand{\bfc}{\ensuremath{\mathbf{c}}}
\newcommand{\bfe}{\ensuremath{\mathbf{e}}}
\newcommand{\bff}{\ensuremath{\mathbf{f}}}
\newcommand{\bfr}{\ensuremath{\mathbf{r}}}
\newcommand{\bfx}{\ensuremath{\mathbf{x}}}
\newcommand{\bfy}{\ensuremath{\mathbf{y}}}
\newcommand{\bfz}{\ensuremath{\mathbf{z}}}
\newcommand{\bfG}{{\bf G}}
\newcommand{\bfH}{{\bf H}}
\newcommand{\bfY}{\ensuremath{\mathbf{Y}}}
\newcommand{\calF}{\ensuremath{\mathcal{F}}}
\newcommand{\calG}{\ensuremath{\mathcal{G}}}
\newcommand{\calH}{\ensuremath{\mathcal{H}}}
\newcommand{\calI}{\ensuremath{\mathcal{I}}}
\newcommand{\calN}{\ensuremath{\mathcal{N}}}
\newcommand{\calO}{\ensuremath{\mathcal{O}}}
\newcommand{\calP}{\ensuremath{\mathcal{P}}}
\newcommand{\calR}{\ensuremath{\mathcal{R}}}
\newcommand{\calT}{\ensuremath{\mathcal{T}}}
\newcommand{\calS}{\ensuremath{\mathcal{S}}}
\newcommand{\calZ}{\ensuremath{\mathcal{Z}}}
\newcommand{\rk}{\ensuremath{{\rm rk}}}
\newcommand{\ol}{\overline}
\newcommand{\Gri}{\ensuremath{\mathcal{O}}}
\newcommand{\Gfi}{\ensuremath{K}}
\newcommand{\dotcup}{\ensuremath{\mathbin{\mathaccent\cdot\cup}}}
\newcommand{\lri}{\mathfrak{o}}
\newcommand{\Lri}{\mathfrak{O}}
\newcommand{\lfi}{\mathfrak{k}}
\newcommand{\gp}[1]{\mathrm{gp}(#1)}
\newcommand{\rmN}{\mathrm{N}}
\renewcommand{\epsilon}{\varepsilon}
\renewcommand{\phi}{\varphi}
\DeclareMathOperator{\maj}{maj}
\DeclareMathOperator{\Rad}{Rad}
\DeclareMathOperator{\GL}{GL}
\DeclareMathOperator{\Mat}{Mat}
\DeclareMathOperator{\Stab}{Stab}
\DeclareMathOperator{\real}{Re}
\DeclareMathOperator{\Alt}{Alt}
\DeclareMathOperator{\Sym}{Sym}
\DeclareMathOperator{\Hom}{Hom}
\DeclareMathOperator{\Ind}{Ind}
\DeclareMathOperator{\Res}{Res}
\DeclareMathOperator{\ad}{ad}
\DeclareMathOperator{\diag}{diag}
\DeclareMathOperator{\Pfaff}{Pf}
\DeclareMathOperator{\Irr}{Irr}
\DeclareMathOperator{\coxinv}{inv}
\DeclareMathOperator{\coxneg}{neg}
\DeclareMathOperator{\rmaj}{rmaj} 
\def \tud {\textup{d}}
\begin{document}

\title[Representation zeta functions of nilpotent
  groups]{Representation zeta functions of nilpotent groups and
  generating functions for Weyl groups of type~$B$}

\author{Alexander Stasinski and Christopher Voll}

\address{School of Mathematics, University of Southampton, University
  Road, Sou\-thampton SO17 1BJ, United Kingdom}
\curraddr{A.~Stasinski: Department of Mathematical Sciences, Durham
  University, Durham DH1 3LE, United Kingdom. C.~Voll: Fakult\"at
  f\"ur Mathematik, Universit\"at Bielefeld\\ Postfach
  100131\\ D-33501 Bielefeld\\Germany}
\email{alexander.stasinski@durham.ac.uk, C.Voll.98@cantab.net}

\keywords{finitely generated nilpotent groups, representation zeta
  functions, Kirillov orbit method, signed permutation statistics,
  $q$-series, Igusa local zeta functions, prehomogeneous vector spaces}
\subjclass[2000]{22E55, 20F69, 05A15, 11M41}

\begin{abstract}
We study representation zeta functions of finitely generated,
torsion-free nilpotent groups which are groups of rational points of
unipotent group schemes over rings of integers of number fields. Using
the Kirillov orbit method and $\mfp$-adic integration, we prove
rationality and functional equations for almost all local factors of
the Euler products of these zeta functions.

We further give explicit formulae, in terms of Dedekind zeta
functions, for the zeta functions of class-$2$-nilpotent groups
obtained from three infinite families of group schemes, generalising
the integral Heisenberg group. As an immediate corollary, we obtain
precise asymptotics for the representation growth of these groups, and
key analytic properties of their zeta functions, such as meromorphic
continuation.

We express the local factors of these zeta functions in terms of
generating functions for finite Weyl groups of type~$B$. This allows
us to establish a formula for the joint distribution of three
functions, or `statistics', on such Weyl groups.

Finally, we compare our explicit formulae to $\mfp$-adic integrals
associated to relative invariants of three infinite families of
prehomogeneous vector spaces.
\end{abstract}

\maketitle

\setcounter{tocdepth}{1}
\thispagestyle{empty}

\section{Introduction and statement of main results}\label{sec:intro}

\subsection{Background and summary}
Let $G$
be a group and denote, for $n\in\N$, by $r_{n}(G)$ the number of
isomorphism classes of $n$-dimensional irreducible complex
representations of~$G$. The group $G$ is called
(\emph{representation}) \emph{rigid} if $r_{n}(G)$ is finite for
all~$n\in\N$. If $G$ is rigid and the numbers $r_{n}(G)$ grow at most
polynomially, a fruitful approach to the study of these numbers is to
encode them into a Dirichlet generating function, which is called the
representation zeta function of $G$. A variety of tools from complex
analysis, algebraic geometry, model theory and combinatorics is
available to investigate these zeta functions, and thus to shed light
on the arithmetic and asymptotic properties of the sequence~
$(r_{n}(G))$; see, for instance,
\cite{LarsenLubotzky/08,cr_AKOV/09,BartholdidelaHarpe/10}. Throughout
this paper we use the term `representations' to refer to complex
representations. In the context of topological groups, we only
consider continuous representations.

In the current paper we study representation zeta functions associated
to finitely generated, torsion-free nilpotent groups (so-called
$\calT$-groups).  Such groups are not rigid. Indeed, a non-trivial
$\calT$-group has infinitely many representations of dimension
$1$. However, $\calT$-groups are {}``rigid up to twisting'' by
$1$-dimensional representations.  More precisely, let $G$ be a group
and let $\rho$ and $\sigma$ be irreducible representations of~$G$. One
calls $\rho$ and $\sigma$ \emph{twist-equivalent} if there exists a
$1$-dimensional representation $\chi$ of $G$ such
that~$\rho\cong\chi\otimes\sigma$. If $G$ is a topological group, we
demand in addition that $\chi$ be continuous. This defines an
equivalence relation on the set of irreducible representations of $G$,
whose classes are called \emph{twist-isoclasses}. For a group $G$ and
$n\in\N$ we denote by $\widetilde{r}_{n}(G)$ the number of
twist-isoclasses of $G$ of dimension~$n$. It is known that if $G$ is a
$\calT$-group and $n\in\N$ then there exists a finite quotient $G(n)$
of $G$ such that every $n$-dimensional representation of $G$ is
twist-equivalent to one that factors through $G(n)$. In particular,
the number $\widetilde{r}_{n}(G)$ is finite for all~$n\in\N$; see
\cite[Theorem 6.6]{LubotzkyMagid/85}.  The \emph{representation zeta
function} of a $\calT$-group $G$ is defined to be the Dirichlet
generating function \begin{equation}
\zeta_{G}(s):=\sum_{n=1}^{\infty}\widetilde{r}_n(G)n^{-s},\label{def:zeta}\end{equation}
where $s$ is a complex variable. The sequence $(\widetilde{r}_n(G))$
grows polynomially, and thus this series converges on a complex right
half-plane $\{s\in\C\mid\real(s)>\alpha\}$, for some~$\alpha\in\R$;
see Lemma~\ref{lem:poly}. The \emph{abscissa of convergence
$\alpha(G)$} of $\zeta_{G}(s)$, that is, the infimum of such~$\alpha$,
gives the precise degree of polynomial growth.  More precisely, if $G$
is non-trivial, $\alpha(G)$ is the smallest value such that
$\sum_{n=1}^{N}\widetilde{r}_n(G)=O(N^{\alpha(G)+\varepsilon})$ for
every $\varepsilon\in\R_{>0}$.

The zeta function $\zeta_G(s)$ has an Euler product, indexed by the
rational primes. Indeed, one has
\begin{equation}\label{equ:coarse euler}
 \zeta_{G}(s)=\prod_{p\text{ prime}}\zeta_{G,p}(s),
\end{equation}
where
$\zeta_{G,p}(s):=\sum_{i=0}^{\infty}\widetilde{r}_{p^{i}}(G)p^{-is}$,
for each prime number $p$.  This Euler product simply reflects the
facts that every representation of $G$ is twist-equivalent to one that
factors through a finite quotient, and that finite nilpotent groups
are direct products of their Sylow $p$-subgroups. Much deeper lies the
fact, proved by Hrushovski and Martin, that all the factors
in~\eqref{equ:coarse euler} are rational functions in the parameter
$p^{-s}$; see~\cite[Theorem 8.4]{HrushovskiMartin/07}. Another deep
result establishes functional equations of almost all of the local
factors `upon inversion of $p$'; see~\cite[Theorem D]{Voll/10}.

The only explicitly computed examples of representation zeta functions
of $\calT$-groups in print prior to the current paper are formulae for
the zeta function of the Heisenberg group $\bfH(\mathcal{O})$ of
upper-unitriangular $3\times3$-matrices over the ring of integers
$\mathcal{O}$ of a number field $K$ of degree at most~$2$;
cf.~\cite{NunleyMagid/89} for $K=\mathbb{Q}$, and
\cite[Theorem~1.1]{Ezzat/14} for $K$ a quadratic number
field. These examples agree with the formula
\begin{equation}\label{equ:heisenberg number field}
\zeta_{\bfH(\mathcal{O})}(s)=\frac{\zeta_{K}(s-1)}{\zeta_{K}(s)} = \prod_{\mfp}
\frac{1-q^{-s}}{1-q^{1-s}},
\end{equation}
where $\zeta_{K}(s)$ is the Dedekind zeta function of $K$, $\mfp$
ranges over the non-zero prime ideals of $\Gri$, and $q=|\Gri/\mfp|$.
In particular, we have $\zeta_{\bfH(\Z)}(s) = \sum_{n=1}^\infty
\phi(n) n^{-s}$, where $\phi$ is the Euler totient function. Ezzat
conjectured in~\cite{Ezzat/14} that~\eqref{equ:heisenberg
  number field} holds for arbitrary number fields $K$. This is in fact
implied by one of our main results; cf.~Theorem~\ref{thmABC:thm
  B}. Note that for $G=\bfH(\Gri)$ the Euler product
in~\eqref{equ:heisenberg number field} is finer than the
product~\eqref{equ:coarse euler}, and that all the local factors are
rational in~$q^{-s}$. We will show that these facts, too, are special
cases of more general phenomena.

The $\calT$-groups studied in the current paper are obtained from
unipotent group schemes over rings of integers of number fields. From
now on, let $\mathcal{O}$ be the ring of integers of a number
field~$K$. Let $\bfG$ be a unipotent group scheme defined over~$\Gri$;
see Section~\ref{subsec:group schemes}. Then $\bfG(\mathcal{O})$ is a
$\calT$-group. Given a non-zero prime ideal $\mfp$ of~$\Gri$, we
denote by $\Gri_\mfp$ the completion of $\Gri$ at $\mfp$, and by $p$
the residue field characteristic of~$\Gri_\mfp$. In
Proposition~\ref{pro:Fine Euler prod} we establish the Euler
factorisation
\begin{equation}\label{equ:euler}
 \zeta_{\bfG(\Gri)}(s)=\prod_{\mfp}\zeta_{\bfG(\Gri_\mfp)}(s),
\end{equation}
indexed by the non-zero prime ideals of~$\Gri$, with local factors
given by \[
\zeta_{\bfG(\Gri_{\mfp})}(s)=\sum_{i=0}^{\infty}\widetilde{r}_{p^{i}}(\bfG(\Gri_{\mfp}))p^{-is}.\]
The factorisation~\eqref{equ:euler} reflects the fact that unipotent
groups have the Congruence Subgroup Property and the strong
approximation property. Note that it refines~\eqref{equ:coarse euler}.

The Euler product~\eqref{equ:euler} is similar to the Euler product
satisfied by representation zeta functions of arithmetic subgroups of
semisimple algebraic groups;
cf.~\cite[Proposition~1.3]{LarsenLubotzky/08}. The archimedean factors
present in the latter are absent in the realm of nilpotent groups,
reflecting the fact that every representation of a $\T$-group is
twist-equivalent to one which factors over a finite quotient. The
non-archimedean factors in the context of semisimple groups have been
studied using techniques from model theory and $\mfp$-adic
integration; see, for instance,~\cite{Jaikin/06, Avni/08, cr_AKOV/09}.

\bigskip
The following summarises the main results of the paper and some of its
motivation. On the one hand, we set up a general framework for
studying representation zeta functions of $\calT$-groups obtained from
group schemes over $\calO$, which in turn are associated to
$\calO$-Lie lattices. The principal tools here are the Kirillov orbit
method and $\mfp$-adic integration, which enable us to analyse the
local factors of Euler products of the form~\eqref{equ:euler}. In
particular, we derive formulae which are uniform under extensions of
$\Gri$, and prove rationality and functional equations for almost all
local factors; see Theorem~\ref{thmABC:thm A}.

On the other hand, we study three infinite families of $\calT$-groups
of nilpotency class~$2$ for which we carry out this general analysis
explicitly and prove more precise results. Our approach allows us to
compute the zeta functions of these groups as finite products
involving translates of Dedekind zeta functions; see
Theorem~\ref{thmABC:thm B}. This shows, in particular, that in these
cases all of the local factors in~\eqref{equ:euler} are rational both
in $q$ and in~$q^{-s}$, and satisfy the functional equations which
Theorem~\ref{thmABC:thm A} asserts only for almost all local
factors. Using our formulae it is easy to read off the zeta functions'
key analytic properties, such as abscissae of convergence, meromorphic
continuation, and location and order of the poles; see
Corollary~\ref{cor:thm B}. The formulae for the zeta functions also
imply precise asymptotic formulae for the representation growth of the
relevant groups.

We do not expect the strong regularity properties displayed by the
zeta functions in Theorem~\ref{thmABC:thm B} to hold in general, not
even in nilpotency class~$2$. Nevertheless, the interesting problem
arises to characterise the groups for which they do hold.

Originally our interest in the three specific families of $\T$-groups
studied in Theorem~\ref{thmABC:thm B} arose out of an analogy with
$\mfp$-adic integrals associated to reduced irreducible prehomogeneous
vector spaces with relative invariants; see
Section~\ref{sec:pvs}. These are complex vector spaces on which
algebraic groups act with Zariski dense orbits. Three infinite
families of such prehomogeneous vector spaces are given by $n\times
n$-matrices, symmetric matrices and antisymmetric matrices,
respectively. The relative invariants in these cases are the
determinant or, in the case of antisymmetric matrices, the Pfaffian;
the associated $\mfp$-adic integrals are Igusa's local zeta functions
associated to these polynomials. These well-known integrals are of
particular interest as they are cases in which the Bernstein-Sato
polynomial conjecture is known to hold. This conjecture connects the
real parts of the poles of Igusa's local zeta function with the zeros
of the integrand's Bernstein-Sato polynomial. We record in this paper
an intriguing analogy between the representation zeta functions
computed in Theorem~\ref{thmABC:thm B} and the $\mfp$-adic integrals
associated to the above-mentioned prehomogeneous vector spaces. In
particular, the local pole spectra of the former are obtained from the
pole spectra of the latter by a simple translation by integers which
turn out to be the (global) abscissae of convergence of the relevant
zeta functions. Our observations give rise to the general question to
what degree the local pole spectra of zeta functions of groups reflect
geometric invariants like zeros of Bernstein-Sato polynomials. This
points to a possible analogue of the Bernstein-Sato polynomial
conjecture for zeta functions of groups.

Our explicit `multiplicative' formulae given in
Theorem~\ref{thmABC:thm B} are proved using `additive' formulae given
in Theorem~\ref{thmABC:thm C}. The bridge between the two is given by
an identity which is reminiscent of the $q$-multinomial theorem in the
theory of basic hypergeometric series; see
Proposition~\ref{pro:multinomial}. The present paper provides a
natural motivation for this identity in the context of zeta functions
of groups; see also Remark~\ref{rem:subgroup zeta}.  We show that the
polynomials appearing in Theorem~\ref{thmABC:thm C} have a rich
combinatorial structure: we express them as generating functions for
statistics on finite Weyl groups of type~$B$, also known as
hyperoctahedral groups. As an application, we prove a formula for the
joint distribution of three statistics on Weyl groups of type~$B$; see
Proposition~\ref{pro:distribution}. It would be interesting to know
whether other types of Weyl groups occur in this framework, and to
study the resulting analogues of our results.

\subsection{Uniformity, rationality and functional equations}\label{subsec:uniratfunc}
In Section~\ref{subsec:group schemes} we describe a class of group
schemes defined by nilpotent $\Gri$-Lie lattices. By an $\Gri$-Lie
lattice we mean a free and finitely generated $\Gri$-module, together
with an antisymmetric, bi-additive form satisfying the Jacobi
identity. Let $\Lambda$ be a nilpotent $\Gri$-Lie lattice of
nilpotency class $c$, and write $\Lambda'$ for the derived Lie
lattice~$[\Lambda,\Lambda]$. If $\Lambda$ satisfies the condition
$\Lambda'\subseteq c!\Lambda$ it gives rise to a unipotent group
scheme $\bfG_{\Lambda}$ over $\Gri$ via the Hausdorff series. When the residue characteristic $p$ is odd or when $p=2$ and $c\geq4$, there exists a Kirillov orbit method to describe the
irreducible representations of groups of the
form~$\bfG_{\Lambda}(\Gri_{\mfp})$ in terms of co-adjoint orbits; see
Section~\ref{subsec:kirillov}. In the case where $c=2$ we give an
unconditional construction of a unipotent group scheme associated to
$\Lambda$ which coincides with $\bfG_\Lambda$ if~$\Lambda'\subseteq
2\Lambda$, and describe a Kirillov orbit method for
$\bfG_{\Lambda}(\Gri_\mfp)$ which holds for all primes; see
Section~\ref{subsec:class 2}. In any case, whenever the Kirillov orbit
method applies it allows us to describe local representation zeta
functions in terms of Poincar\'e series and suitable $\mfp$-adic
integrals. The first main result of the paper establishes universal
formulae for the generic factors in Euler products of the
form~\eqref{equ:euler} for groups of the form $\bfG_\Lambda(\Gri_L)$,
where $\Gri_L$ is the ring of integers in a finite extension $L$ of
the number field~$K$. Let $d=\dim_K(\Lambda'\otimes_\Gri K)$.

\begin{thmABC} \label{thmABC:thm A}
 There exist a finite set $S$ of prime ideals of $\Gri$, $t\in\N$, and
 a rational function
 $R(X_{1},\dots,X_{t},Y)\in\Q(X_{1},\dots,X_{t},Y)$ such that, for
 every prime ideal $\mfp$ of $\Gri$ with $\mfp\not\in S$, the
 following is true. There exist algebraic integers
 $\lambda_{1},\dots,\lambda_{t}$, depending on~$\mfp$, such that, for
 all finite extensions $\Lri$ of $\lri=\Gri_{\mfp}$ one has \[
 \zeta_{\bfG_{\Lambda}(\Lri)}(s) =
 R(\lambda_{1}^{f},\dots,\lambda_{t}^{f},q^{-fs}),\] where $q$ denotes
 the residue field cardinality of $\lri=\Gri_\mfp$, and
 $f=f(\Lri,\lri)$ is the relative degree of inertia. In particular,
 the local factor $\zeta_{\bfG_{\Lambda}(\Lri)}(s)$ is a rational
 function in~$q^{-fs}$. Furthermore, the following functional equation
 holds:
 \begin{equation}
   \zeta_{\bfG_{\Lambda}(\Lri)}(s)|_{\substack{q\rightarrow
       q^{-1}\\ \lambda_{i}\rightarrow\lambda_{i}^{-1}}
       }=q^{fd}\zeta_{\bfG_{\Lambda}(\Lri)}(s).\label{equ:funeq}
 \end{equation}
\end{thmABC}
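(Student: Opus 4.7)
The plan is to combine the Kirillov orbit method with uniform $\mfp$-adic integration techniques. The Kirillov orbit method, as described in the preceding sections (applicable when $p > c$, and for all primes in the class-$2$ case), parametrises the irreducible representations of the congruence quotients $\bfG_{\Lambda}(\Lri/\mfP^n)$ by co-adjoint orbits of linear forms on $\Lambda(\Lri/\mfP^n)$, the dimensions being the square roots of the indices of the radicals of the associated alternating forms. Summing over $n$ and over orbits, one rewrites $\zeta_{\bfG_{\Lambda}(\Lri)}(s)$ as a $\mfp$-adic integral over a suitable dual module of $\Lambda(\Lri)$ whose integrand is a product of absolute values of elementary divisors of the commutator matrix $R(\bfY)$ of $\Lambda$, evaluated at the coordinates of the linear form.

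Next I would apply the standard uniform rationality theory for such integrals. After excluding a finite set $S$ of primes (those dividing the discriminant of a strong resolution of singularities of the divisor cut out by the minors of $R(\bfY)$, together with primes $p\leq c$ where the Kirillov machinery may fail), Denef's rationality theorem with its Macintyre--Pas model-theoretic uniformity (or, equivalently, the motivic integration framework of Cluckers--Loeser) yields a single rational function $R \in \Q(X_1,\dots,X_t,Y)$ that specialises to the local factor under $X_i \mapsto \lambda_i^f$, $Y \mapsto q^{-fs}$. The $\lambda_i$ are the numbers of $\F_q$-points on certain smooth subvarieties over the residue field $\Gri/\mfp$ arising from the resolution; under an unramified extension of degree $f$ these counts become $\lambda_i^f$ by the Lefschetz trace formula, while quantifier elimination in Pas's language handles ramified extensions without loss of the uniform shape. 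This delivers the first assertion.

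For the functional equation~\eqref{equ:funeq} I would appeal to the Denef--Meuser-type inversion symmetry for Igusa integrals whose associated varieties admit a strong resolution stable under Poincaré duality; in the representation-theoretic setting this is precisely the content of the result referenced as Theorem D of \cite{Voll/10}, which is the announced precedent for~\eqref{equ:funeq}. The exponent $q^{fd}$ is forced by dimension bookkeeping involving the $K$-rank $d$ of $\Lambda'$. The main obstacle is the functional equation rather than the rationality: one must verify that the point counts $\lambda_i$ and the combinatorial data of the resolution are linked by a symmetry that simultaneously inverts each $\lambda_i$ and $q$ and produces exactly the correction $q^{fd}$. This symmetry is driven by the antisymmetry of the commutator matrix $R(\bfY)$, which forces the associated divisor to be Gorenstein in the relevant sense; additional care is required to ensure that the inversion symmetry persists uniformly under both ramified and unramified extensions $\Lri/\lri$, not merely under the unramified case handled automatically by the Lefschetz formula.
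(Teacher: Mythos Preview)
Your overall strategy coincides with the paper's: express $\zeta_{\bfG_\Lambda(\Lri)}(s)$ via the Kirillov orbit method as a $\mfp$-adic integral whose integrand is built from minors of the commutator matrix of $\Lambda$ (with respect to a \emph{global} $\Gri$-basis), then invoke the uniform Denef-type explicit formula and the functional-equation machinery of \cite{Voll/10}. The paper's own proof is terse and defers the heavy lifting to \cite[Section~4]{AKOVI/10}; its one substantive contribution is the observation that after excluding the primes dividing $|\iota(\Lambda'):\Lambda'|$ one may take the basis so that the exponents $b_i$ vanish, whence the integrand is defined over~$\Gri$ and only the domain of integration varies with~$\Lri$.

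Two corrections to your write-up. First, the $\lambda_i$ are \emph{not} point counts but Frobenius eigenvalues on the $\ell$-adic cohomology of the smooth strata of the resolution: point counts over $\F_q$ are alternating sums of the $\lambda_i$, and it is these eigenvalues, not the counts, that get raised to the $f$-th power under passage to~$\F_{q^f}$. (If $\lambda$ were a point count, $\lambda^f$ would have no meaning as a point count over the extension.) Second, your handling of ramified extensions is muddled. Quantifier elimination in Pas's language is not what does the work here; rather, once the integrand is defined over $\Gri$ with no explicit uniformiser, Denef's formula shows the integral depends only on the residue field of~$\Lri$, so ramification is simply invisible. This is why the formula involves only $q^f$ and $\lambda_i^f$ with $f$ the inertia degree. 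Your invocation of Gorenstein-ness for the functional equation is also not how \cite{Voll/10} proceeds; the argument there is a combinatorial inversion property of the explicit Denef-type formula, driven by Poincar\'e duality on the strata and the particular shape of the integrand, rather than a ring-theoretic condition on the divisor.
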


\begin{remark}
 The statement of Theorem~\ref{thmABC:thm A} is analogous
 to~\cite[Theorem~A]{AKOVI/10}. In fact, the proof of~\ref{thmABC:thm
   A} leans heavily on the methods developed in~\cite{AKOVI/10}. We
 note, however, that~\cite[Theorem~A]{AKOVI/10} applies only to
 certain pro-$p$ subgroups of the groups featuring in the Euler
 factors, whereas Theorem~\ref{thmABC:thm A} yields a formula for
 almost all factors in~\eqref{equ:euler}. The functional
 equation~\eqref{equ:funeq} refines the statement of~\cite[Theorem
   D]{Voll/10}.
\end{remark}
The proof of Theorem~\ref{thmABC:thm A} is found in Section~\ref{subsec:proof thm A}.
\subsection{Groups of types $F$, $G$ and $H$, and multiplicative formulae}
Much of the present paper is concerned with the representation zeta
functions of $\T$-groups obtained from three specific infinite
families of group schemes. These arise from class-$2$-nilpotent $\Z$-Lie
lattices, and each family generalises a different aspect
of the Heisenberg group scheme~$\bfH$.

\begin{definition}\label{def:lattices}
Let $n\in\N$ and $\delta\in\{0,1\}$.  We define the following
nilpotent $\Z$-Lie lattices of class $2$:
\begin{align*}
 \calF_{n,\delta} & =\langle x_k, y_{ij} \mid[x_k,y_{ij}],
 \,[x_{i},x_{j}]-y_{ij},\; 1\leq k \leq 2n+\delta, 1\leq i<j\leq
 2n+\delta\rangle,\\
\calG_{n} & =\langle x_k,y_{ij}\mid[x_k,y_{ij}],\, [x_{i},x_{n+j}] -
y_{ij}, \; 1\leq k \leq 2n, 1\leq i,j\leq n\rangle, \\
\calH_{n} &
=\langle x_k,y_{ij} \mid[x_k,y_{ij}],[x_{i},x_{n+j}]-y_{ij},
[x_{j},x_{n+i}]-y_{ij}, \; 1 \leq k \leq 2n,1\leq i\leq j\leq
n\rangle.
\end{align*}
Note that all these Lie lattices are isomorphic to quotients of the
free class-$2$-nilpotent Lie rings generated by the $x_k$. In fact,
$\calF_{n,\delta}$ is isomorphic to the free class-$2$-nilpotent Lie
ring on $x_1,\dots,x_{2n+\delta}$. In any case, the elements
$x_k,y_{ij}$ yield $\Z$-bases for the respective Lie lattices.
\end{definition}

Let $F_{n,\delta}$, $G_{n}$ and $H_{n}$ denote the unipotent group
schemes over $\mathbb{Z}$ associated to the Lie lattices
$\calF_{n,\delta}$, $\calG_{n}$ and~$\calH_{n}$, respectively. We
call groups of the form $F_{n,\delta}(\Gri)$, $G_{n}(\Gri)$ and
$H_{n}(\Gri)$ groups of type $F$, $G$ and $H$, respectively.  The
groups $F_{n,\delta}(\mathbb{Z})$ are the free class-$2$-nilpotent
groups on $2n+\delta$ generators. Note that
$F_{1,0}(\Gri)=G_{1}(\Gri)=H_{1}(\Gri)=\bfH(\Gri)$, the Heisenberg
group over~$\Gri$.

Apart from being natural generalisations of $\bfH(\Gri)$, the groups
of type $F$, $G$ and $H$ are of interest as their zeta functions are close analogues of zeta
integrals associated to relative invariants of irreducible
prehomogeneous vector spaces. This connection is explored in
Section~\ref{sec:pvs}. In our second main result we give explicit
formulae for the zeta functions of groups of type $F$, $G$ and $H$, in
terms of Dedekind zeta functions. For $n\in\N$, we set $m=\lfloor n/2
\rfloor$ and~$\eps=n-2m\in\{0,1\}$, so that $n=2m+\eps$.

\begin{thmABC}\label{thmABC:thm B}
 Let $n=2m+\eps\in\N$, $\delta\in\{0,1\}$ and let $K$ be a number field with
 ring of integers~$\Gri$. Then
 \begin{align} \zeta_{F_{n,\delta}(\Gri)}(s) &
  =\prod_{i=0}^{n-1}\frac{\zeta_{K}(s-2(n+i+\delta)+1)}{\zeta_{K}(s-2i)},\label{equ:mult
    F}\\ \zeta_{G_{n}(\Gri)}(s) &
  =\prod_{i=0}^{n-1}\frac{\zeta_{K}(s-n-i)}{\zeta_{K}(s-i)},\label{equ:mult
    G}\\ \zeta_{H_{n}(\Gri)}(s) &
  =\frac{\zeta_{K}(s-n)}{\zeta_{K}(s)}\prod_{i=0}^{m-1}\frac{\zeta_{K}(2(s-m-i-\varepsilon)-1)}{\zeta_{K}(2(s-i-1))}.\label{equ:mult
    H}
 \end{align}
\end{thmABC}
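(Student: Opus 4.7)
The strategy is to establish Theorem~\ref{thmABC:thm B} factor by factor via the Euler product~\eqref{equ:euler}: I would compute each local factor $\zeta_{\bfG(\Gri_\mfp)}(s)$, for $\bfG \in \{F_{n,\delta}, G_n, H_n\}$, as an explicit rational function in $q=|\Gri_\mfp/\mfp|$ and $q^{-s}$, and then recognise the product over $\mfp$ as the claimed ratio of Dedekind zeta functions. The key reason this should be feasible uniformly in all primes is that the Lie lattices $\calF_{n,\delta}$, $\calG_n$, $\calH_n$ are of nilpotency class~$2$, so the unconditional Kirillov orbit method of Section~\ref{subsec:class 2} is available at every prime, not merely at primes exceeding the class.

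I would start by observing that the commutator form on each of the three lattices is ``generic'': for $\calF_{n,\delta}$ it is the universal alternating form on $2n+\delta$ generators, for $\calG_n$ it is the universal bilinear pairing between two $n$-dimensional spaces, and for $\calH_n$ it is the universal symmetric pairing on an $n$-dimensional space. Applying the Kirillov correspondence, each local zeta function should become a $\mfp$-adic integral whose integrand records the elementary divisors of a generic antisymmetric $(2n+\delta)\times(2n+\delta)$, square $n\times n$, or symmetric $n\times n$ matrix with entries in $\Gri_\mfp$. Stratifying the domain of integration by elementary divisor type and using classical orbit counts of $\GL_n$ on such matrix spaces over the residue field $\Gri_\mfp/\mfp$ should then yield an ``additive'' expression for $\zeta_{\bfG(\Gri_\mfp)}(s)$ as a finite sum indexed by integer partitions, with summands rational in $q$ and $q^{-s}$. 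I expect this to be exactly the content of Theorem~\ref{thmABC:thm C}.

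To pass from the additive formula to the multiplicative formulae~\eqref{equ:mult F}--\eqref{equ:mult H}, I would then apply the $q$-multinomial-type identity of Proposition~\ref{pro:multinomial}, which collapses the partition sum into a finite product of geometric factors of the form $(1 - q^{a-bs})^{\pm 1}$. Taking the Euler product over all non-zero primes $\mfp$ of $\Gri$ assembles these local factors into the corresponding Dedekind zeta values $\zeta_K(bs - a)$; the precise shifts read off from Proposition~\ref{pro:multinomial} should match the translates $s - 2(n+i+\delta)+1$, $s - n - i$, and $2(s-m-i-\eps) - 1$ appearing in the statement of Theorem~\ref{thmABC:thm B}.

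The main obstacle is the explicit evaluation of the $\mfp$-adic integrals, that is, the proof of Theorem~\ref{thmABC:thm C}, since it requires accurate control of the elementary divisor stratification for generic antisymmetric, square and symmetric matrices simultaneously, and in a form amenable to Proposition~\ref{pro:multinomial}. The case $F_{n,\delta}$ should be the cleanest, thanks to the Pfaffian structure of generic antisymmetric matrices; the case $H_n$ will be the most delicate, since the quadratic shifts $2(s-i-1)$ appearing in~\eqref{equ:mult H} reflect a split between diagonal and off-diagonal contributions of symmetric matrices that must be tracked carefully throughout the elementary divisor sum, and potentially dealt with differently in residue characteristic $2$ before being unified.
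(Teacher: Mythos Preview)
Your outline is essentially what the paper does for types $F$ and $G$: Theorem~\ref{thmABC:thm C} gives the additive formula via the class-$2$ Kirillov method and elementary divisor stratification, and then Proposition~\ref{pro:multinomial} (with the substitutions $(X,Y,Z)=(q^2,q^{-2\delta+1},q^{2(n+\delta)-1-s})$ and $(X,Y,Z)=(q,1,q^{n-s})$ respectively) collapses the sum into the claimed product of local Dedekind factors. So that part is fine.

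The gap is in type $H$. You propose to apply Proposition~\ref{pro:multinomial} there as well, but it does not apply: the left-hand side of~\eqref{equ:binomial B} is built from the $X$-multinomial coefficients $\binom{n}{I}_{X^{-1}}$, whereas the polynomials $f_{H_n,I}(X)$ in Theorem~\ref{thmABC:thm C} are
\[
f_{H_n,I}(X)=\Bigl(\prod_{j=1}^{l}(X^2;X^2)_{\lfloor\mu_j/2\rfloor}^{-1}\Bigr)(X^{i_1+1};X)_{n-i_1},
\]
which have no such multinomial structure. No substitution of variables in Proposition~\ref{pro:multinomial} will produce this sum. The paper deals with this by an entirely separate combinatorial argument: it introduces a ``bisection map'' $\phi:2^{[n-1]_0}\to 2^{[m-1]_0}$ and proves (Lemma~5.2) that the partial sum of $f_{H_n,I}\Pi_I$ over each fibre $\phi^{-1}(J)$ equals a universal factor times $f_{H_n,2J+\eps}\Pi_{2J+\eps}$. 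The crucial observation is that $f_{H_n,2J+\eps}=f_{F_{m,\eps},J}$ and that the surviving numerical data match those of $F_{m,\eps}$ under $s\mapsto 2s-2$, yielding the identity $\zeta_{H_n(\Gri_\mfp)}(s)=\frac{1-q^{-s}}{1-q^{n-s}}\,\zeta_{F_{m,\eps}(\Gri_\mfp)}(2s-2)$, from which~\eqref{equ:mult H} follows. This fibre-sum reduction is the missing idea in your proposal; the difficulty is genuinely combinatorial and is not about residue characteristic~$2$ (the counting in Theorem~\ref{thmABC:thm C} is already uniform in~$p$).
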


\begin{corollary}\label{cor:thm B}
 Let $\mathbf{G}\in\{F_{n,\delta},G_{n},H_{n}\}$.
 \begin{enumerate}
  \item \label{item:funeqs}For all non-zero prime ideals $\mfp$ of $\Gri$, the following functional equation holds:
   \begin{equation}\label{equ:funeqs thm B}
    \zeta_{\bfG(\Gri_\mfp)}(s)|_{q\rightarrow q^{-1}} = q^{d(\bfG)}\zeta_{\bfG(\Gri_\mfp)}(s),
   \end{equation}
   where
   $$d(F_{n,\delta})=\binom{2n+\delta}{2}, \quad d(G_{n})= n^2, \quad
   d(H_{n})= \binom{n+1}{2}$$ is the $\Z$-rank of the corresponding
   derived Lie lattice.
  \item \label{item:alphas}The abscissa of convergence
    $\alpha(\mathbf{G}(\Gri))$ of
    $\zeta_{\mathbf{G}(\Gri)}(s)$ is an integer. More precisely, we
    have
    \begin{equation}
     \alpha(F_{n,\delta}(\Gri))=2(2n+\delta-1),\quad\alpha(G_{n}(\Gri))=2n,\quad\alpha(H_{n}(\Gri))=n+1.\label{equ:alphas}
    \end{equation}
  In particular, $\alpha(\bfG) := \alpha(\mathbf{G}(\Gri))$ is
  independent of $\Gri$.
 \item \label{item:analytic}The zeta function $\zeta_{\mathbf{G}(\Gri)}(s)$ has
   meromorphic continuation to the whole complex plane. The continued
   zeta function has no singularities on the line
   $\{s\in\C\mid\real(s)=\alpha(\mathbf{G})\}$, apart from a simple
   pole at $s=\alpha(\mathbf{G})$.
 \item \label{item:asymptotic}There exists a constant
   $c(\mathbf{G}(\Gri))$, given explicitly in terms of special values
   of the Dedekind zeta function~$\zeta_K(s)$, such that
\[
\sum_{n\leq N}\widetilde{r}_n(\mathbf{G}(\Gri))\sim c(\mathbf{G}(\Gri))\cdot
N^{\alpha(\mathbf{G})}\quad\text{ as }N\longrightarrow\infty.\]

\end{enumerate}
\end{corollary}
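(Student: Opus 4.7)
My plan is to read off all four statements directly from the explicit multiplicative formulae \eqref{equ:mult F}--\eqref{equ:mult H} of Theorem~\ref{thmABC:thm B}, using only standard analytic properties of Dedekind zeta functions.

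For part~(1), I work one local factor at a time. The $\mfp$-factor of $\zeta_K(s-a)$ is $(1-q^{a-s})^{-1}$, which transforms under $q\mapsto q^{-1}$ as
\[
(1-q^{a-s})^{-1}\bigr|_{q\to q^{-1}} = -q^{a-s}(1-q^{a-s})^{-1}.
\]
Applying this to every shifted Dedekind factor in Theorem~\ref{thmABC:thm B}, the signs cancel (numerator and denominator contain equally many factors) and the $q$-exponents collapse. A short bookkeeping gives the cumulative exponents $n(2n+2\delta-1)=\binom{2n+\delta}{2}$ for $F_{n,\delta}$, $n^{2}$ for $G_{n}$, and $n+m(2m+2\varepsilon-1)=\binom{n+1}{2}$ for $H_{n}$; these match $d(\bfG)$ and yield~\eqref{equ:funeqs thm B}.

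For parts~(2) and~(3), I use that $\zeta_{K}(s)$ is meromorphic on $\C$ with only a simple pole at $s=1$, and that its nontrivial zeros lie in the critical strip $0\leq\real(s)\leq 1$. Every numerator factor $\zeta_{K}(s-a)$ contributes a simple pole to the quotient at $s=a+1$; every denominator factor $\zeta_{K}(s-a)$ (respectively $\zeta_{K}(2s-b)$) contributes potential poles only at zeros of $\zeta_{K}$, all confined to the strip $a\leq\real(s)\leq a+1$ (respectively $b/2\leq\real(s)\leq (b+1)/2$). The rightmost pole therefore comes from the largest numerator shift, giving the abscissae in~\eqref{equ:alphas}, and the meromorphic continuation asserted in part~(3) transports through the product. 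To see that $s=\alpha(\bfG)$ is the \emph{only} singularity on the line $\real(s)=\alpha(\bfG)$ and is a simple pole, I check case by case that (a) the other numerator shifts are strictly smaller, so no other numerator factor has a singularity on that line, and (b) each denominator strip lies strictly to the left of $\real(s)=\alpha(\bfG)$; for instance, for $H_{n}$ the rightmost denominator strip reaches only to $\real(s)=m+\tfrac{1}{2}<n+1=\alpha(H_{n})$. Identification of $\alpha(\bfG)$ with the abscissa of convergence then invokes Landau's theorem, which applies since the coefficients $\widetilde{r}_{n}(\bfG(\Gri))$ are non-negative.

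Given parts~(2) and~(3), part~(4) follows from the Wiener--Ikehara Tauberian theorem applied to $\zeta_{\bfG(\Gri)}(s)$, yielding
\[
\sum_{n\leq N}\widetilde{r}_{n}(\bfG(\Gri))\sim\frac{\Res_{s=\alpha(\bfG)}\zeta_{\bfG(\Gri)}(s)}{\alpha(\bfG)}\,N^{\alpha(\bfG)}.
\]
The constant $c(\bfG(\Gri))$ is the explicit evaluation of this residue, in terms of the residue $\kappa_{K}$ of $\zeta_{K}$ at $s=1$ and the special values of $\zeta_{K}$ at the remaining numerator and denominator shifts. The principal obstacle is minor: the arithmetic verification in step~(3) that no critical strip of a denominator Dedekind factor meets the line $\real(s)=\alpha(\bfG)$, so that the singularity there is genuinely simple. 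The $H_{n}$ case, with its Dedekind factors scaled under $s\mapsto 2s$, demands the most care but reduces to a straightforward inequality.
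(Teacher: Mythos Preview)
Your proof is correct and follows essentially the same approach as the paper's: both read off all four assertions directly from the explicit formulae of Theorem~\ref{thmABC:thm B} together with classical analytic properties of the Dedekind zeta function and a standard Tauberian theorem. You simply supply more of the bookkeeping (the exponent computations for the functional equation, the verification that the denominator strips lie strictly left of $\real(s)=\alpha(\bfG)$, and the invocation of Landau's theorem) than the paper's one-paragraph sketch.
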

Theorem~\ref{thmABC:thm B} and its corollary are proved in Section~\ref{sec:proof thm B}.

We remark that the functional equations~\eqref{equ:funeqs thm B}
illustrate~\eqref{equ:funeq}, which Theorem~\ref{thmABC:thm A} only
asserts for almost all~$\mfp$. It is of interest to what extent the
assertions of Corollary~\ref{cor:thm B} generalise to more general
group schemes. Specifically, we ask the following.
\begin{question}\label{qun:analytic}
 Let $\bfG$ be a unipotent group scheme defined over the ring of
 integers $\Gri$ of a number field $\Gfi$, and let $L$ be a finite
 extension of $K$, with ring of integers~$\Gri_L$. Is the abscissa of
 convergence $\alpha(\bfG(\Gri_L))$ independent of~$L$? Is it a
 rational number?  Does the zeta function $\zeta_{\bfG(\Gri)}(s)$
 allow for analytic continuation beyond its abscissa of convergence?
\end{question}
The last two questions would have an affirmative answer if local
representation zeta functions of $\calT$-groups were `cone integrals'
in the sense of~\cite{duSG/00}.

\subsection{Additive formulae and Weyl group generating functions}
In our third main result we prove `additive formulae' for the local
factors of the zeta functions of groups of type $F$, $G$ and~$H$.  To
state this result we introduce some notation. Throughout the paper,
$X$, $Y$ and $Z$ will denote indeterminates in the
field~$\Q(X,Y,Z)$. For $N\in\N$, we set $[N] = \{1,\dots,N\}$ and
$[N]_0 = \{0,1,\dots,N\}$. We write $(\udrl{N})_{X}$ for the
polynomial~$1-X^{N}$. We set $(\underline{0})_{X} = 1$ and write
$(\underline{N})_X!$ for
$(\underline{1})_{X}(\underline{2})_{X}\cdots(\underline{N})_{X}$.
Given $n\in\N$ and $I=\{i_{1},\dots,i_{l}\}_{<}\subseteq[n-1]_{0}$, we
set $i_{0} = 0$ and $i_{l+1} = n$, respectively. Here the subscript
$<$ on $\{i_{1},\dots,i_{l}\}$ indicates that~$i_1<\dots<i_l$.  Note
that $i_1=n$ when $I=\varnothing$. For $j\in[l]_{0}$ we define
$\mu_{j}=i_{j+1}-i_{j}$. For $a,b\in\N_{0}$ such that $a\geq b$, we
have the `$X$-binomial coefficient', also known as the Gaussian
polynomial \[
\binom{a}{b}_{X}=\frac{(\underline{a})_{X}!}{(\udrl{a-b})_{X}!(\underline{b})_{X}!}.\]
Furthermore, we have the `$X$-multinomial coefficient' \[
\binom{n}{I}_{X}=\binom{n}{i_{l}}_{X}\binom{i_{l}}{i_{l-1}}_{X}\cdots\binom{i_{2}}{i_{1}}_{X}.\]
We also define the $Y$-Pochhammer symbol, or $Y$-shifted factorial, as
\begin{equation}\label{def:pochhammer}
(X;Y)_{n}=\prod_{i=0}^{n-1}(1-XY^{i}).
\end{equation}
In the literature, the above symbols are often defined in terms of a
formal variable $q$, and thus one often encounters the $q$-binomial
and $q$-multinomial coefficients, and $q$-Pochhammer symbol,
respectively. In our context, however, $q$ is always a prime power, so
we choose $X$ and $Y$ as formal variables.

Given a fixed prime power $q=p^{f}$ we write $(\underline{N})$ for
$(\underline{N})_{q^{-1}}$.
\begin{thmABC} \label{thmABC:thm C}
 Let $n\in\N$, $\delta\in\{0,1\}$ and let $K$ be a number field with
 ring of integers~$\Gri$.  Let
 $\mathbf{G}\in\{F_{n,\delta},G_{n},H_{n}\}$. There exist polynomials
 $f_{\mathbf{G},I}(X)\in\Z[X]$, ${I\subseteq[n-1]_{0}}$, and natural
 numbers $a(\mathbf{G},i)$, ${i\in[n-1]_{0}}$, such that, for all
 non-zero prime ideals $\mfp$ of~$\Gri$, one has
\begin{equation}
 \zeta_{\mathbf{G}(\Gri_\mfp)}(s)=\sum_{I\subseteq[n-1]_{0}}f_{\mathbf{G},I}(q^{-1})\prod_{i\in
   I}\frac{q^{a(\mathbf{G},i)-(n-i)s}}{1-q^{a(\mathbf{G},i)-(n-i)s}},\label{main
   theorem formula}
\end{equation}
where $q=|\Gri/\mfp|$.  The
data $f_{\mathbf{G},I}(X)$ and $a(\mathbf{G},i)$ are given in the
following table.
\[
\begin{array}{c|c|c} \mathbf{G} & f_{\mathbf{G},I}(X) &
  a(\mathbf{G},i)\\ \hline F_{n,\delta} &
  \binom{n}{I}_{X^2}(X^{2(i_1+\delta)+1};X^2)_{n-i_{1}} &
  \binom{2n+\delta}{2}-\binom{2i+\delta}{2}\\ G_{n} &
  \binom{n}{I}_{X}(X^{i_1+1};X)_{n-i_1} & n^{2}-i^{2}\\ H_{n} &
  \left(\prod_{j=1}^{l}(X^2;X^2)_{\lfloor\mu_{j}/2\rfloor}^{-1}\right)(X^{i_1+1};X)_{n-i_1}
  & \binom{n+1}{2}-\binom{i+1}{2}
\end{array}
\]
\end{thmABC}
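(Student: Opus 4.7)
The plan is to invoke the class-$2$ Kirillov orbit method set up in Section~\ref{subsec:class 2} to express each local zeta function $\zeta_{\mathbf{G}(\Gri_\mfp)}(s)$ as a $\mfp$-adic integral, and then evaluate that integral by stratifying the domain according to the elementary divisor type of the relevant commutator matrix. Writing $\lri=\Gri_\mfp$, let $R(\mathbf{y})$ denote the commutator matrix of $\Lambda\otimes_\Z\lri$ for $\Lambda\in\{\calF_{n,\delta},\calG_n,\calH_n\}$, whose entries are linear forms in the central variables $\mathbf{y}=(y_{ij})$. For $\calF_{n,\delta}$, $R(\mathbf{y})$ is the generic $(2n+\delta)\times(2n+\delta)$ antisymmetric matrix; for $\calG_n$ it is the antisymmetric matrix built as a block off-diagonal from a generic $n\times n$ matrix $M(\mathbf{y})$; for $\calH_n$, the same with $M$ generic symmetric. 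The class-$2$ Kirillov correspondence parametrises twist-isoclasses of irreducible representations of $\mathbf{G}(\lri)$ by co-adjoint orbits of characters of the derived lattice, with the dimension of an orbit governed by the elementary divisors of $R(\mathbf{y})$.

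Summing over orbits, one obtains an integral representation
\[
\zeta_{\mathbf{G}(\lri)}(s)=C_{\mathbf{G}}\int_{\lri^{d}}\Psi_{\mathbf{G}}(\mathbf{y},s)\,d\mu
\]
for a suitable constant $C_{\mathbf{G}}$, where $\Psi_{\mathbf{G}}(\mathbf{y},s)$ is a product of norms of Pfaffian minors of $R(\mathbf{y})$ (for $\calF_{n,\delta}$ and $\calH_n$) or determinantal minors of $M(\mathbf{y})$ (for $\calG_n$), raised to powers depending linearly on $s$. This reduction parallels the setup underlying Theorem~\ref{thmABC:thm A}.

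Next I would stratify the domain by the elementary divisor type of $R(\mathbf{y})$ modulo powers of $\mfp$. The natural index is $I=\{i_{1}<\dots<i_{l}\}\subseteq[n-1]_{0}$, recording the jumps in the rank filtration. On each stratum $\calY_{I}$ the integrand becomes a monomial in $q^{-s}$ times a multivariate geometric series in the remaining valuation parameters; resumming produces precisely $\prod_{i\in I}\frac{q^{a(\mathbf{G},i)-(n-i)s}}{1-q^{a(\mathbf{G},i)-(n-i)s}}$, with the exponent $a(\mathbf{G},i)$ recording the combined homogeneity of the relevant Pfaffian or determinantal invariants. The prefactor $f_{\mathbf{G},I}(q^{-1})$ then emerges as the normalised $\lri/\mfp$-point count of~$\calY_{I}$.

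These point counts feed on three distinct matrix-enumeration inputs over the residue field: counting flags together with full-rank generic matrices yields $\binom{n}{I}_{X}(X^{i_{1}+1};X)_{n-i_{1}}$ for $\calG_{n}$; the antisymmetric analogue, where Pfaffian-controlled rank forces the appearance of $X^{2}$, gives $\binom{n}{I}_{X^{2}}(X^{2(i_{1}+\delta)+1};X^{2})_{n-i_{1}}$ for $\calF_{n,\delta}$; and the symmetric-matrix count, where the stabilisers of non-degenerate symmetric forms under $\GL_{n}$-congruence are orthogonal groups, produces the parity-sensitive factor $\prod_{j=1}^{l}(X^{2};X^{2})_{\lfloor\mu_{j}/2\rfloor}^{-1}$ for $\calH_{n}$. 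Substituting these inputs and summing over $I\subseteq[n-1]_{0}$ yields formula~(\ref{main theorem formula}). The main obstacle is the symmetric case $\calH_{n}$: the orbit enumeration over residue fields of arbitrary characteristic requires careful treatment of orthogonal stabilisers and their quadratic-form invariants, and reconciling this count with the Pfaffian data from the Kirillov integrand, so that the $\lfloor\mu_{j}/2\rfloor$-dependence emerges cleanly, is the delicate step. The case $\calG_{n}$ rests on classical Smith normal form enumerations, and the antisymmetric case $\calF_{n,\delta}$ is a close cousin; the symmetric case is where the real work lies.
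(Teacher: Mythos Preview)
Your overall strategy matches the paper's: invoke the class-$2$ Kirillov formalism, stratify by the elementary divisor profile of the commutator matrix, and recognise each stratum's contribution as a geometric-progression product weighted by a residue-field point count. Two points are worth noting.

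First, a minor presentational difference: the paper does not actually pass through the $\mfp$-adic integral. It works directly with the Poincar\'e series $\calP_{\calR,\lri}(s)$ of Proposition~\ref{pro:zeta=poincare class 2}, reindexes it as a sum over pairs $(I,\bfr_I)$ with $I\subseteq[n-1]_0$ and $\bfr_I\in\N^I$ recording the jumps in the elementary divisor type, and then computes the cardinalities $|\rmN^\lri_{I,\bfr_I}(\bfG)|$ by a recursive lifting argument (reduce modulo $\mfp^{r_{i_l}}$, count matrices of the appropriate rank over $\Fq$ via Lemma~\ref{lem:ranks}, count lifts, recurse on $I\setminus\{i_l\}$). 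This is equivalent to your integral stratification but cleaner to execute.

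Second, and more substantively: your proposed route for type $H$ via orthogonal stabilisers is exactly the approach the paper warns against. As the Remark following Proposition~\ref{pro: Mainthm-formulae} explains, classifying symmetric forms over $\lri/\mfp^N$ by $\GL$-congruence is straightforward for odd residue characteristic but genuinely delicate when $p=2$, and this obscures the fact that the answer is uniform in $p$. The paper sidesteps this entirely: it uses only the closed-form count $|\Sym_{n,n-i}(\Fq)|$ from Lemma~\ref{lem:ranks} (valid for all $q$), and the recursive lifting argument never needs to know which isometry class a given symmetric matrix lies in --- only its rank modulo successive powers of $\mfp$. So the ``real work'' you anticipate in the symmetric case is in fact avoided, not confronted.
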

Note that $f_{\mathbf{G},I}(X)$ and $a(\mathbf{G},i)$ are independent
of $\Gri$ and~$\mfp$. Theorem~\ref{thmABC:thm C} is proved in
Section~\ref{sec:proofThmC}.

The proof of the `multiplicative' Theorem~\ref{thmABC:thm B} relies on
the `additive' Theorem~\ref{thmABC:thm C}, together with the following
identity, which we prove in
Section~\ref{subsec:binomial}.

\begin{proposition} \label{pro:multinomial}
 For $n\in\mathbb{N}$ we have
\begin{gather}
\sum_{I\subseteq[n-1]_0}\binom{n}{I}_{X^{-1}}(YX^{-i_1-1};X^{-1})_{n-i_1}
  \prod_{i\in
  I}\gp{(X^{i}Z)^{n-i}} =\frac{(X^{-n}YZ;X)_n}{(Z;X)_n}.\label{equ:binomial
  B}
\end{gather}
\end{proposition}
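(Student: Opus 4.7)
The plan is to treat both sides as rational functions of $Z$ over $\Q(X,Y)$ and compare them via partial fractions in $Z$. The RHS $\frac{(X^{-n}YZ;X)_n}{(Z;X)_n}$ has simple poles only at $Z = X^{-j}$ for $j\in[n-1]_0$, value $1$ at $Z=0$, and limit $Y^nX^{-n^2}$ at $Z\to\infty$ (by comparing leading coefficients in numerator and denominator). The summand of the LHS indexed by $I \subseteq [n-1]_0$ carries factors $\gp{(X^iZ)^{n-i}} = (X^iZ)^{n-i}/(1-(X^iZ)^{n-i})$ for $i \in I$, each contributing a priori poles at $Z = X^{-i}\zeta$ for every $(n-i)$-th root of unity $\zeta$.

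The first substantive step is to show that the poles of the LHS at non-trivial roots of unity cancel across the sum over $I$, so that the LHS too has poles only at $Z = X^{-j}$ for $j \in [n-1]_0$. Granting this, the identity reduces to matching residues at each such pole together with the limits at $Z = 0$ and $Z \to \infty$. Computing $\lim_{Z \to X^{-j}}(1 - X^jZ)\gp{(X^jZ)^{n-j}} = (n-j)^{-1}$, the LHS residue at $Z = X^{-j}$ becomes a finite sum over $I \ni j$; this must equal the RHS residue $\frac{\prod_{k=1}^n(1 - YX^{-j-k})}{\prod_{i \neq j}(1 - X^{i-j})}$. The match at $Z \to \infty$ gives the polynomial identity $\sum_I (-1)^{|I|}\binom{n}{I}_{X^{-1}}(YX^{-i_1-1};X^{-1})_{n-i_1} = Y^n X^{-n^2}$, which can be verified by extracting the top-degree term in $Y$ and resumming.

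An alternative route is induction on $n$. The base case $n=1$ is a two-term computation giving $(1 - X^{-1}YZ)/(1-Z)$ on both sides. The inductive step splits the LHS by the smallest element $i_1 \in I \cup \{n\}$ (taking $i_1 = n$ for $I = \emptyset$), using the factorisation $\binom{n}{I}_{X^{-1}} = \binom{i_2}{i_1}_{X^{-1}}\binom{n}{I\setminus\{i_1\}}_{X^{-1}}$ with $i_2$ the next-smallest element of $I$ (or $n$ if none). The factor $(YX^{-i_1-1};X^{-1})_{n-i_1}$ depends only on $i_1$, and one attempts to reduce the inner sum to a shifted instance of the hypothesis via $q$-Pascal manipulations.

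The main obstacle, in either approach, is the unusual exponent structure $n - i$ in $\gp{(X^iZ)^{n-i}}$: because this exponent depends on $i$, the LHS does not match any standard $q$-hypergeometric identity, and any proof must exploit a subtle interaction between the $X^{-1}$-multinomials $\binom{n}{I}_{X^{-1}}$ and the Pochhammer $(YX^{-i_1-1};X^{-1})_{n-i_1}$. In practice I would likely first establish the $Y=0$ specialisation $\sum_I \binom{n}{I}_{X^{-1}}\prod_{i \in I}\gp{(X^iZ)^{n-i}} = 1/(Z;X)_n$ by direct induction or partial-fraction analysis, and then expand $(YX^{-i_1-1};X^{-1})_{n-i_1}$ via the $q$-binomial theorem and bootstrap the full identity by equating coefficients of $Y^k$.
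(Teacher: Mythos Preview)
Your proposal is a survey of possible strategies rather than a proof, and the one you lead with has a gap you yourself flag but do not close. The partial-fraction approach stands or falls on the cancellation of poles at $Z=X^{-i}\zeta$ for non-trivial $(n-i)$-th roots of unity $\zeta$; you call this ``the first substantive step'' but give no argument for it. This cancellation is not obvious: for a fixed $I$ the summand has genuine poles at such points, and the required cancellation across the sum over $I$ encodes essentially the same combinatorics as the identity itself (certainly at least its $Y=0$ specialisation). Even granting the cancellation, the residue matching at $Z=X^{-j}$ involves a sum over all $I\ni j$ with weights depending on all of $I$, and you give no indication why this simplifies to the stated product.

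Your alternative --- induction on $n$, splitting by $j=\min(I\cup\{n\})$ --- is the right idea, and it is exactly what the paper does; but the factorisation $\binom{n}{I}_{X^{-1}}=\binom{i_2}{i_1}_{X^{-1}}\binom{n}{I\setminus\{i_1\}}_{X^{-1}}$, while correct, is not the useful one, because $i_2$ varies with $I$ and the residual sum is not of the same shape. The paper instead writes $I=\{j\}\cup(I'+j)$ with $I'\subseteq[n-1-j]$ and uses $\binom{n}{I}_{X^{-1}}=\binom{n}{j}_{X^{-1}}\binom{n-j}{I'}_{X^{-1}}$; then the inner sum over $I'$ is precisely a shifted copy of the $Y=0$ sum $\mathcal{A}_{n-j}(X,X^jZ)$. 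The paper first proves the $Y=0$ identity $\mathcal{A}_{n}(X,Z)=(1-Z^n)/(Z;X)_n$ by this induction, finishing with the $q$-binomial theorem at $Y=0$. For general $Y$ it then observes that $(YX^{-j-1};X^{-1})_{n-j}$ depends only on $j$, so the sum over $I$ with fixed minimum $j$ collapses to $\binom{n}{j}_X(YX^{-j-1};X^{-1})_{n-j}\,Z^{n-j}/(X^jZ;X)_{n-j}$; clearing the common denominator $(Z;X)_n$ and applying the $q$-binomial theorem once more (with $Y$ replaced by $X^{-n}Y$) yields $(X^{-n}YZ;X)_n$ directly. This is cleaner than your proposed bootstrap via expanding the Pochhammer in powers of $Y$.
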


We call the identity ~\eqref{equ:binomial B} of `multinomial type' due
to its analogy with the multinomial theorem. For groups of type $F$
and $G$, Theorem~\ref{thmABC:thm B} is a formal consequence of
Theorem~\ref{thmABC:thm C} and
Proposition~\ref{pro:multinomial}. Further combinatorial arguments are
needed to deal with groups of type~$H$, and these are treated in
Section~\ref{subsec:thm B}.

Proposition~\ref{pro:multinomial} has applications to generating
functions of statistics on Weyl groups of type $B$; for details, see
Section~\ref{subsec:weyl}. Let $n\in\N$, and consider the group $B_n$
of permutations $w$ of the set $[\pm n]_0:=\{-n,\dots,n\}$ such that,
for all $i\in[\pm n]_0$, $w(-i)=-w(i)$. We may identify $B_n$ with the
group of `signed permutation matrices', that is, monomial matrices
whose non-zero entries are in~$\{1,-1\}$. Interesting statistics on
$B_n$ include the usual Coxeter length function $l$ with respect to
the standard Coxeter generating set $S=\{s_{0},\dots,s_{n-1}\}$ and
the statistic `$\nega$' which keeps track of the number of negative
entries of a signed permutation. A result of Reiner in
\cite{Reiner-Signed-perm} allows us to express the polynomials
$f_{F_{n,\delta},I}(X)$ and $f_{G_n,I}(X)$ given in
Theorem~\ref{thmABC:thm C} in terms of the joint distribution of the
statistics $l$ and $\nega$ over descent classes in~$B_n$. For groups
of type $H$ we present a conjectural formula of this kind. To state
it, we introduce a new statistic $L$ on $B_n$. For $w\in B_{n}$ we
define
\begin{equation}\label{def:L}
 {L}(w)=\frac{1}{2}\#\{(x,y)\in[\pm n]_{0}^{2}\mid
 x<y,\ w(x)>w(y),\ x\not\equiv y\bmod{(2)}\}\in\N_0
\end{equation}
and write $D(w)=\{s\in S \mid l(ws)<l(w)\}$ for the (right) descent
set of~$w$. From now on, we identify $S$ with $[n-1]_0$ in the
obvious way. For $I\subseteq S$, let $I^c=[n-1]_0\setminus I$ denote the complement of $I$, and let $B_n^{I^c}=\{w\in B_n \mid D(w)\subseteq I\}$.
\begin{conjecture}\label{con:L}
\label{con:L-tilde}For $n\in\N$ and $I\subseteq[n-1]_{0}$
we have
\begin{equation*}
f_{H_{n},I}(X)=\sum_{w\in B_n^{I^c}}(-1)^{l(w)}X^{{L}(w)}.
\end{equation*}

\end{conjecture}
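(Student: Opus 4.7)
The plan is to prove the conjecture by induction on $n$ combined with a parabolic-quotient argument in the spirit of Reiner's approach in \cite{Reiner-Signed-perm}. Rewriting the formula for $f_{H_n,I}(X)$ given in Theorem~\ref{thmABC:thm C} yields
$$f_{H_n, I}(X) = \frac{(X;X)_n}{(X;X)_{i_1} \, \prod_{j=1}^{l} (X^2; X^2)_{\lfloor \mu_j / 2 \rfloor}}.$$
This numerator-over-denominator shape suggests expressing $\sum_{w \in B_n^{I^c}} (-1)^{l(w)} X^{L(w)}$ as the ratio of the corresponding signed sum over $B_n$ and the one over the parabolic subgroup $W_I = \langle s_i : i \in I\rangle$.

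The first step is to prove the extremal identity
$$\sum_{w \in B_n} (-1)^{l(w)} X^{L(w)} = (X;X)_n,$$
which is the case $I=[n-1]_0$ of the conjecture. This should be established via a sign-reversing involution $\iota \colon B_n \to B_n$ that preserves $L$ while toggling $l(w)$ by one, leaving a distinguished set $F_n$ of fixed points whose $L$-distribution equals $(X;X)_n$. A natural candidate for $\iota$ is to locate the smallest index $i$ exhibiting a \emph{flippable} feature (for instance, an adjacent transposition or sign-swap compatible with the parities of the neighbouring entries) and to toggle it, so that the inversions of opposite-parity pairs that determine $L(w)$ are left invariant while the Coxeter length changes by exactly one.

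The next step is to exploit the unique parabolic factorisation $w = uv$ with $u \in B_n^{I^c}$, $v \in W_I$, and $l(uv) = l(u) + l(v)$, and to show that $L$ likewise admits a decomposition $L(uv) = L(u) + L^I(v)$ for an appropriate statistic $L^I$ on $W_I$. Using $W_I \cong B_{i_1} \times S_{\mu_1} \times \cdots \times S_{\mu_l}$, one reduces to the extremal identity on the $B_{i_1}$-factor (producing $(X;X)_{i_1}$) and to local identities of the shape $\sum_{\sigma \in S_{\mu_j}} (-1)^{l(\sigma)} X^{L^I(\sigma)} = (X^2; X^2)_{\lfloor \mu_j/2 \rfloor}$ on each symmetric-group factor, each amenable to a further sign-reversing involution. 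Assembling these factors reproduces $f_{H_n, I}(X)$.

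The hard part will be the construction of $L^I$ and the proof of the factorisation $L(uv) = L(u) + L^I(v)$: since $L$ is defined globally via opposite-parity inversion pairs in $[\pm n]_0$, its behaviour under the parabolic decomposition is delicate. The appearance of $\lfloor \mu_j / 2 \rfloor$ reflects the fact that within a block of $\mu_j$ consecutive indices, opposite-parity inversions split according to the parities of the endpoints, with essentially $\lfloor \mu_j / 2 \rfloor$ relevant pairs; matching this exactly with the involution design is the key subtlety. Should this approach prove intractable, a fallback is a direct induction on $n$ using the decomposition of $B_n$ into $B_{n-1}$-cosets indexed by $w^{-1}(\pm n)$, matching the recursive structure satisfied by $f_{H_n, I}(X)$ extracted from Theorem~\ref{thmABC:thm C}.
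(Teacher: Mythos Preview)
The first thing to flag is that the paper does \emph{not} prove this statement: it is recorded as Conjecture~\ref{con:L}, and the authors only claim partial cases ($I\in\{\{0\},[n-1]_0\}$, and $n$ even with $I\subseteq 2\N_0\cap[n-1]_0$) in a separate forthcoming paper. There is therefore no proof in the paper to compare your proposal against; you are attempting something the authors left open.

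On the substance of your plan, there are two issues. A minor one: the parabolic subgroup relevant to $B_n^{I^c}=\{w:D(w)\subseteq I\}$ is $W_{I^c}$, not $W_I$; it is $W_{I^c}$ that decomposes as $B_{i_1}\times S_{\mu_1}\times\cdots\times S_{\mu_l}$, and the factorisation you want is $w=uv$ with $u\in B_n^{I^c}$, $v\in W_{I^c}$.

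The serious gap is that the additive decomposition $L(uv)=L(u)+L^{I^c}(v)$ cannot hold for \emph{any} statistic $L^{I^c}$ on $W_{I^c}$ depending only on $v$. Here is a concrete obstruction already for $n=2$ and $I=\{0\}$, so $W_{I^c}=\langle s_1\rangle$. One computes directly from~\eqref{def:L} that $L([1,2])=0$, $L([2,1])=1$, $L([-1,2])=1$, $L([2,-1])=1$. The two elements $[2,1]$ and $[2,-1]$ both have right descent set $\{1\}$ and factor as
\[
[2,1]=[1,2]\cdot s_1,\qquad [2,-1]=[-1,2]\cdot s_1,
\]
with $[1,2],[-1,2]\in B_2^{\{1\}}$. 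Subtracting, a putative $L^{I^c}(s_1)$ would have to equal both $L([2,1])-L([1,2])=1$ and $L([2,-1])-L([-1,2])=0$, a contradiction. Thus the parity-restricted inversion count $L$ does not split along the parabolic factorisation in the way your argument requires, and the Reiner-style quotient approach---which works for $l$ and $\coxneg$ precisely because those statistics are additive---does not transfer to $L$ without substantial new ideas. The failure is not merely technical: the parity condition on \emph{positions} in the definition of $L$ interacts with the parities of the block boundaries $i_j$, which is presumably why the authors can establish the conjecture for $I$ supported on even indices but not in general. Your fallback inductive approach via the decomposition by $w^{-1}(\pm n)$ may be more promising, but you should be aware that this, too, is an open problem rather than a routine exercise.
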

In a \cite{StasinskiVoll/13} we prove Conjecture~\ref{con:L} for
arbitrary $n\in\N$ and $I\in\left\{\{0\},[n-1]_{0}\right\}$, as well
as in the case where $n$ is even and
$I\subseteq[n-1]_{0}\cap2\N_0$. We remark that the statistic $L$ is a
natural extension of a statistic on the symmetric group $S_n$ defined
by Klopsch and the second author; cf.~\cite[Lemma
  5.2]{KlopschVoll/09}.

Combining our Weyl-group theoretical interpretations of the polynomials
$f_{\bfG,I}(X)$ with Proposition~\ref{pro:multinomial} allows us to describe the joint
distribution of three statistics on Weyl groups of type~$B$, namely $\sigma-l$, $\coxneg$ and $\rmaj$. The statistics $l$ and $\coxneg$ have already been introduced. We now give the definitions of $\sigma$ and $\rmaj$. For a general finite Weyl group~$W$, with root system $\Phi$ and simple
roots $\alpha_0,\dots,\alpha_{n-1}$, let $b_0,\dots,b_{n-1}$ denote the simple
root coordinates for half the sum of all positive roots, that is
$\sum_{\alpha\in\Phi}\alpha=2\sum_{i=0}^{n-1}b_i\alpha_i$.
Specifically, for $B_n$ with generating set $S$ as above, the simple
root coordinates $b_i$, $i\in[n-1]_0$ are given by $b_i=n^2-i^2$;
cf.~\cite[Plate~II]{Bourbaki/02} (note that we use the reverse
ordering of the simple roots). For $w\in B_n$, let
$$\sigma(w) = \sum_{i\in D(w)}b_i=\sum_{i\in D(w)}(n^2-i^2)$$ and define the
\emph{reverse major index} by
$\rmaj(w) = \sum_{i\in D(w)} (n-i)$.
\begin{proposition}\label{pro:distribution}
For $n\in\N$ we have
\begin{gather*}
\sum_{w\in B_n} X^{(\sigma-l)(w)}Y^{\nega(w)}Z^{\rmaj(w)} =
\prod_{i=0}^{n-1}\frac{(1+X^iYZ)(1-(X^{n+i}Z)^{n-i})}{1-X^{n+i}Z}.
\end{gather*}
\end{proposition}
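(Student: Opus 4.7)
The plan is to combine a descent-class decomposition of the left-hand side with Reiner's formula for the joint $(l,\nega)$-distribution on descent classes in $B_{n}$ \cite{Reiner-Signed-perm}, and then invoke Proposition~\ref{pro:multinomial} after a suitable substitution of variables.

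First I would observe that both $\sigma(w)$ and $\rmaj(w)$ depend on $w$ only through its descent set~$D(w)$, and that $X^{n^{2}-i^{2}}Z^{n-i}=(X^{n+i}Z)^{n-i}$. Grouping the sum according to $D(w)=J$ gives
\begin{equation*}
\sum_{w\in B_{n}}X^{(\sigma-l)(w)}Y^{\nega(w)}Z^{\rmaj(w)}=\sum_{J\subseteq[n-1]_{0}}\left(\prod_{i\in J}(X^{n+i}Z)^{n-i}\right)\sum_{\substack{w\in B_{n}\\ D(w)=J}}X^{-l(w)}Y^{\nega(w)}.
\end{equation*}
Applying M\"obius inversion on the Boolean lattice of $[n-1]_{0}$ to re-express the inner sum in terms of the subsums over $B_{n}^{I^{c}}=\{w:D(w)\subseteq I\}$ and swapping the order of summation (letting $J\setminus I$ range over subsets of $[n-1]_{0}\setminus I$) yields
\begin{equation*}
\sum_{w\in B_{n}}X^{(\sigma-l)(w)}Y^{\nega(w)}Z^{\rmaj(w)}=\sum_{I\subseteq[n-1]_{0}}h(I)\prod_{i\in I}(X^{n+i}Z)^{n-i}\prod_{i\not\in I}\bigl(1-(X^{n+i}Z)^{n-i}\bigr),
\end{equation*}
where $h(I):=\sum_{w\in B_{n}^{I^{c}}}X^{-l(w)}Y^{\nega(w)}$.

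Second, Reiner's joint distribution formula, which underlies the recovery of $f_{G_{n},I}(X)$ and $f_{F_{n,\delta},I}(X)$ of Theorem~\ref{thmABC:thm C} upon suitable specialisations of~$Y$, reads
\begin{equation*}
\sum_{w\in B_{n}^{I^{c}}}X^{l(w)}Y^{\nega(w)}=\binom{n}{I}_{X}\prod_{k=i_{1}+1}^{n}(1+YX^{k}).
\end{equation*}
Inverting $X$ and rewriting the product as a $Y$-shifted factorial in the sense of~\eqref{def:pochhammer} gives
\begin{equation*}
h(I)=\binom{n}{I}_{X^{-1}}(-YX^{-i_{1}-1};X^{-1})_{n-i_{1}}.
\end{equation*}

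Finally, I would factor $\prod_{i=0}^{n-1}\bigl(1-(X^{n+i}Z)^{n-i}\bigr)$ out of the whole sum. What remains is precisely the left-hand side of Proposition~\ref{pro:multinomial} under the substitutions $Y\mapsto -Y$ and $Z\mapsto X^{n}Z$, whose right-hand side evaluates to $(-YZ;X)_{n}/(X^{n}Z;X)_{n}=\prod_{i=0}^{n-1}(1+X^{i}YZ)/(1-X^{n+i}Z)$. Recombining with the factored product yields the expression on the right-hand side of Proposition~\ref{pro:distribution}.

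The main technical hurdle is pinning down Reiner's formula in the precise form above and verifying the boundary case $I=\emptyset$ (where the convention $i_{1}=n$ makes both sides equal~$1$, and similarly $B_{n}^{I^{c}}=\{e\}$ so $h(\emptyset)=1$); once these are in place the argument is a formal manipulation driven by the multinomial-type identity of Proposition~\ref{pro:multinomial}.
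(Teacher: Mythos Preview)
Your proof is correct and follows essentially the same approach as the paper. The paper packages your M\"obius-inversion step as a separate lemma (Lemma~\ref{lem:Common-denominator}), which is precisely the identity $\sum_{I}h(I)\prod_{i\in I}\gp{Z_i}=\bigl(\sum_{w}h_w\prod_{i\in D(w)}Z_i\bigr)/\prod_i(1-Z_i)$ you derive by hand, and then applies it together with Lemma~\ref{lem:reiner} (Reiner's formula, exactly as you state it) and Proposition~\ref{pro:multinomial} under the same substitution $(Y,Z)\mapsto(-Y,X^{n}Z)$; the only difference is the order of presentation.
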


The generating function for the statistic $\sigma-l$ over a general
finite Weyl group was studied in~\cite{StembridgeWaugh/98}. The
geometric relevance of the statistic $\sigma-l$ is explained
in~\cite[Lemma 2.2]{StembridgeWaugh/98}. Proposition~\ref{pro:distribution}
generalises \cite[Theorem 1.1]{StembridgeWaugh/98} in the case of Weyl
groups of type~$B$. A similar result, pertaining to the statistic $L$
defined in~\eqref{def:L}, exploits the two different expressions of
the local zeta functions of groups of type $H$ given by Theorems
\ref{thmABC:thm B} and~\ref{thmABC:thm C}; see
Proposition~\ref{pro:distribution L}.

\subsection{Notation}\label{subsec:Notation}
We record some of our recurrent notation. Throughout, we denote by $K$
a number field with ring of integers $\Gri=\Gri_K$. We denote by
$\mfp$ a non-zero prime ideal of $\Gri$, and sometimes write $\lri$
for the completion $\Gri_\mfp$ of $\Gri$ at~$\mfp$. We write $q$ for
the residue field cardinality $|\lri/\mfp|$ and $p$ for the residue
field characteristic of~$\lri$. The Dedekind zeta function
$\zeta_K(s)$ of $K$ is
\begin{equation}\label{equ:dedekind}
 \zeta_K(s) = \sum_{I\triangleleft\Gri}|\Gri:I|^{-s} =
\prod_{\mfp}\zeta_{K,\mfp}(s),
\end{equation}
where the product is indexed by the non-zero prime ideals of~$\Gri$,
and $\zeta_{K,\mfp}(s) = 1 / (1-q^{-s})$. For a non-trivial
$\lri$-module $M$, we write $M^*:=M\setminus \mfp M$, and for the trivial
$\lri$-module $\{0\}$ we set $\{0\}^*=\{0\}$. Given a ring $R$, we
write $\rk_R(M)$ to denote the rank of a free $R$-module $M$. We also
write $\rk(x)$ for the rank of a matrix~$x$. For a fixed $d\in\N$, we
write $W(\lri)=(\lri^d)^*$ and, given $N\in\N$, we set
$W_N(\lri)=\left((\lri/\mfp^N)^d\right)^*$.

For any compact abelian group $\mathfrak{a}$ we write
$\widehat{\mathfrak{a}}$ for its Pontryagin dual
$\Hom_\Z^{\textup{cont}}(\mathfrak{a},\mathbb{C}^\times)$. In the context
of nilpotent groups we use the notation $\widehat{G}$ to denote the
profinite completion of the $\T$-group~$G$. We write $\Irr(G)$ for the
collection of isomorphism classes of continuous, irreducible complex
representations of a topological group~$G$.

Given a subset $I\subseteq\N$ we write $I_0$ for $I\cup\{0\}$. For
$a,b\in\Z$, we use the notation $aI+b=\{ai+b \mid i\in I\}$. Given a
term $X$ different from~$1$, we often write $\gp{X}$ for the `geometric
progression'~$X/(1-X)$.

\section{Representation zeta functions of nilpotent groups}
In this section we develop some general machinery to study
representation zeta functions of finitely generated nilpotent groups
obtained from unipotent group schemes associated to nilpotent Lie
lattices over rings of integers of number fields. We first give a
general description of the construction of $\T$-groups from group
schemes, the Kirillov orbit method and tools from $\mfp$-adic
integration in Sections~\ref{subsec:group schemes}
and~\ref{subsec:kirillov}. We prove Theorem~\ref{thmABC:thm A} in
Section~\ref{subsec:proof thm A}. Section~\ref{subsec:class 2} is
dedicated to a more explicit analysis in the case of nilpotency class
$2$, affording slightly stronger results in this case. The results in
this section also prepare the ground for the subsequent proof of
Theorem~\ref{thmABC:thm C}.

\medskip
Let $G$ be a $\calT$-group. Recall that, for $n\in\N$, we denote by
$\widetilde{r}_n(G)$ the number of twist-isoclasses of $n$-dimensional irreducible
representations of~$G$. The following lemma establishes that the
sequence $(\widetilde{r}_n(G))$ has polynomial growth, so that the Dirichlet
series $\zeta_G(s)$ has non-empty domain of convergence.

\begin{lemma}\label{lem:poly}
 The series $\zeta_G(s)=\sum_{n=1}^\infty \widetilde{r}_n(G)n^{-s}$ converges on a
 complex half-plane.
\end{lemma}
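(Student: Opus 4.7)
The plan is to establish polynomial growth of the sequence $(\widetilde{r}_n(G))$; since these coefficients are non-negative integers, this immediately implies convergence of $\zeta_G(s)$ on some right half-plane $\{\real(s) > \alpha\}$ by standard comparison with $\sum n^{C-s}$ for sufficiently large~$C$.

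First I would invoke \cite[Theorem~6.6]{LubotzkyMagid/85} to obtain, for each $n \in \N$, a finite quotient $G(n)$ of $G$ through which every $n$-dimensional irreducible representation of $G$ factors up to twist. This immediately yields $\widetilde{r}_n(G) \le r_n(G(n))$, where $r_n(G(n))$ denotes the number of $n$-dimensional irreducible representations of $G(n)$. Combining this with the orthogonality relation $\sum_\rho (\dim\rho)^2 = |G(n)|$ for the character table of the finite group $G(n)$, I would then deduce
\[ \widetilde{r}_n(G) \,\le\, r_n(G(n)) \,\le\, |G(n)|/n^2. \]

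The remaining task, and the main obstacle, is to bound $|G(n)|$ polynomially in~$n$. For this I would inspect the construction of $G(n)$ in \cite[\S 6]{LubotzkyMagid/85}: $G(n)$ can be realised as the quotient of $G$ by an intersection of finitely many finite-index normal subgroups, each controlling one twist-isoclass of $n$-dimensional representations and each of index polynomially bounded in~$n$. Combined with the polynomial subgroup growth of $\calT$-groups (Grunewald--Segal--Smith, Lubotzky--Mann--Segal), this gives $|G(n)| = O(n^C)$ for some constant $C \in \N$, whence $\widetilde{r}_n(G) = O(n^{C-2})$, which is the desired polynomial bound.
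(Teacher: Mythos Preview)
Your overall strategy of bounding $|G(n)|$ is reasonable in principle, but the justification in your final paragraph has a gap that is essentially circular. You propose to realise $G(n)$ as a quotient by an intersection of finite-index normal subgroups, ``each controlling one twist-isoclass'' and each of polynomially bounded index. But the number of such subgroups is then $\widetilde{r}_n(G)$ itself, so the index of their intersection is a priori bounded only by something like $n^{C\cdot\widetilde{r}_n(G)}$, which is useless. Polynomial subgroup growth does not rescue this: it bounds the \emph{number} of subgroups of a given index, not the index of an intersection of many such subgroups. To make your approach work you would need to exhibit a \emph{single} normal subgroup $N_n\lhd G$ of polynomially bounded index through which every $n$-dimensional irreducible factors up to twist, and the Lubotzky--Magid argument is qualitative and does not hand you such a bound.

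The paper's proof avoids $|G(n)|$ altogether by exploiting monomiality: every finite-dimensional irreducible representation of a $\calT$-group is induced from a one-dimensional character $\psi$ of some subgroup $H$ of index~$n$. Polynomial subgroup growth bounds the number of choices for~$H$. For fixed~$H$, the identity $\chi\otimes\Ind_H^G\psi=\Ind_H^G(\Res_H^G(\chi)\otimes\psi)$ shows that the twist-isoclass of $\Ind_H^G\psi$ depends only on $\psi$ modulo restrictions of characters of~$G$; the number of such classes is at most $|G'\cap H:H'|$, which is polynomially bounded in $n=|G:H|$ for nilpotent groups. Multiplying these two polynomial bounds gives the result directly.
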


\begin{proof}
 We need to show that the sequence $(\widetilde{r}_n(G))$ is bounded by a
 polynomial in~$n$. It is well-known that every finite-dimensional
 irreducible representation of a $\T$-group is monomial, that is induced
 from a $1$-dimensional representation of some subgroup. $\T$-groups
 are further known to have polynomial subgroup growth, that is the
 sequence of the numbers $a_n(G)$ of subgroups of $G$ of index $n$ is
 bounded by a polynomial in~$n$; see, for instance,
 \cite[Theorem~5.1]{LubotzkySegal/03}. It thus suffices to show that
 the sequence of the numbers of twist-isoclasses of representations of
 $G$ obtained by inducing to $G$ a $1$-dimensional representation of
 an index-$n$-subgroup of $G$ is bounded by a polynomial in~$n$. This
 follows from the fact that, given a subgroup $H$ of $G$ of index $n$,
 a $1$-dimensional representation $\chi$ of $G$ and a $1$-dimensional
 representation $\psi$ of $H$, we have that
 \begin{equation}\label{equ:induced}
  \chi\otimes \Ind_H^G(\psi) = \Ind_H^G(\Res_H^G(\chi)\otimes \psi),
 \end{equation}
 and that the index $|G'\cap H:H'|$ is bounded by a polynomial
 in~$n$. To see the latter note that, since $G$ is nilpotent, each
 subgroup of index $n$ in $G$ contains~$G^n$, and that the index of
 $(G^n)'$ in $G'$ is bounded by a power of~$n$, which only depends on
 the number of generators and the nilpotency class of~$G$.
\end{proof}

\subsection{Unipotent group schemes, $\T$-groups and nilpotent Lie lattices}\label{subsec:group schemes}

\subsubsection{$\T$-groups from group schemes}
Let $\mathbf{G}$ be an affine smooth group scheme over~$\Gri$, the
ring of integers of a number field~$\Gfi$.  We say that $\mathbf{G}$
is \emph{unipotent} (\emph{over $\Gri$}) if the geometric fibre
$\mathbf{G}\times_{\Gri}\overline{k(\mfp)}$ is a connected unipotent
algebraic group for all $\mfp\in\text{Spec}({\Gri})$.  Here $k(\mfp)$
is the residue field $\Gri/\mfp$ when $\mfp\neq(0)$, and
$k((0))=K$. Note that, if $\mathbf{G}$ is unipotent, it is
automatically finitely presented over $\Gri$, by the definition of smoothness; see
e.g.~\cite[Section~2]{Alex-RedGreenAlg}.  It is well-known that if $k$
is any field and $\mathbf{G}$ is unipotent over~$k$ then
$\mathbf{G}(k)$ embeds as a subgroup of the group of
upper-unitriangular matrices in $\GL_{N}(k)$, for some~$N$. Thus
$\mathbf{G}(\Gri)$ is nilpotent and torsion-free, by virtue of being a
subgroup of $\mathbf{G}(K$). Moreover, since $\mathbf{G}$ is affine
and finitely presented over $\Gri$, and $\Gri$ is free of finite rank
over $\mathbb{Z}$, the Weil restriction
$\text{Res}_{\Gri/\mathbb{Z}}\mathbf{G}$ is an affine finitely
presented group scheme over $\mathbb{Z}$;
cf.~\cite[Proposition~4.4]{Real-Etale-coho}.  By a result of Borel and
Harish-Chandra, the group of $\mathbb{Z}$-points of an affine group
scheme of finite type over $\mathbb{Z}$ is finitely generated;
cf.~\cite[Theorem~6.12]{Borel-Harish-Chandra}. Therefore
$\mathbf{G}(\Gri)=(\text{Res}_{\Gri/\mathbb{Z}}\mathbf{G})(\mathbb{Z})$
is finitely generated, and thus a $\calT$-group.

For a non-zero prime ideal $\mfp$ of $\Gri$, we denote by
$\Gri_{\mfp}$ the completion of $\Gri$ at~$\mfp$, with maximal ideal
$\mfp$, residue field cardinality $q$ and residue field
characteristic~$p$. Let $\bfG$ be a unipotent group scheme
over~$\Gri$. By the Congruence Subgroup Property for unipotent groups
(see, for instance, \cite{Chahal/80}), and the strong approximation
property for unipotent groups
(cf.~\cite[Lemma~5.5]{PlatonovRapinchuk/94}), the profinite completion
of the $\T$-group $\bfG(\Gri)$ satisfies
\begin{equation}
\widehat{\bfG(\Gri)}=\prod_{\mfp}\bfG(\Gri_{\mfp}),\label{equ:strong}\end{equation}
where $\mfp$ ranges over the non-zero prime ideals of $\Gri$. The
factorisation \eqref{equ:strong} implies that the zeta function
$\zeta_{\bfG(\Gri)}(s)$ satisfies an Euler product, indexed by the
non-zero prime ideals on~$\Gri$. Indeed, every finite-dimensional
irreducible complex representation of $\bfG(\Gri)$ is twist-equivalent
to one with finite image; see \cite[Theorem~6.6]{LubotzkyMagid/85}. We
thus have a dimension-preserving bijection between the
twist-isoclasses of representations of $\bfG(\Gri)$ on the one hand
and continuous irreducible complex representations of
$\widehat{\bfG(\Gri)}$ up to twists by continuous $1$-dimensional
representations on the other. Owing to the product \eqref{equ:strong}
and the resulting fact that \[
\Hom^{\textup{cts}}(\widehat{\bfG(\Gri)},\C^{\times})=\prod_{\mfp}\Hom^{\textup{cts}}(\widehat{\bfG(\Gri_{\mfp})},\C^{\times}),\]
the zeta function $\zeta_{\bfG(\Gri)}(s)$ therefore is the Euler
product of the local zeta functions
\begin{equation}
  \zeta_{\bfG(\Gri_{\mfp})}(s):=\sum_{i=0}^{\infty}\widetilde{r}_{p^{i}}(\bfG(\Gri_{\mfp}))p^{-is}.\label{euler
    factor}
\end{equation}
Note that $\widetilde{r}_n(\bfG(\Gri_{\mfp}))=0$ unless $n$ is a power of $p$, as
$\bfG(\Gri_{\mfp})$ is a pro-$p$ group. The above discussion is summarised in the following result.

\begin{proposition} \label{pro:Fine Euler prod}~
We have the Euler product
 \begin{equation*}
 \zeta_{\bfG(\Gri)}(s)=\prod_{\mfp}\zeta_{\bfG(\Gri_{\mfp})}(s).
 \end{equation*}
\end{proposition}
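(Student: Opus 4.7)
The plan is to unwind the three ingredients highlighted in the discussion preceding the statement. First, I would establish a dimension-preserving bijection between the twist-isoclasses of irreducible representations of $G:=\bfG(\Gri)$ and the continuous twist-isoclasses of irreducible representations of its profinite completion $\widehat{G}$. This relies on the Lubotzky--Magid theorem, which guarantees that every finite-dimensional irreducible representation of the $\T$-group $G$ is twist-equivalent to one with finite image and hence factors through $\widehat{G}$; combined with the elementary fact that every continuous $1$-dimensional representation of $\widehat{G}$ pulls back to a $1$-dimensional representation of $G$ (and vice versa), this reduces the problem to counting continuous irreducible representations of $\widehat{G}$ modulo continuous $1$-dimensional twists.

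Next, I would invoke the congruence subgroup property and the strong approximation property for unipotent groups, cited in the text, to obtain the identification $\widehat{G} = \prod_\mfp \bfG(\Gri_\mfp)$ of \eqref{equ:strong}, where the product runs over all non-zero prime ideals of~$\Gri$. In parallel, I would record the corresponding factorisation $\Hom^{\textup{cts}}(\widehat{G},\C^\times) = \prod_\mfp \Hom^{\textup{cts}}(\bfG(\Gri_\mfp),\C^\times)$ of the continuous Pontryagin duals.

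The main step is then to show that every continuous irreducible representation of the profinite direct product $\widehat{G}$ decomposes as a (restricted) tensor product of continuous irreducible representations of the factors $\bfG(\Gri_\mfp)$, with the same statement at the level of continuous $1$-dimensional characters. The key observation is that any continuous representation of $\widehat{G}$ into $\GL_n(\C)$ has finite image and therefore factors through a finite quotient of $\widehat{G}$ in which only finitely many of the factors act non-trivially; the tensor-product decomposition for finite direct products of finite groups is standard. Multiplicativity of dimensions across tensor factors then yields the arithmetic identity $\widetilde{r}_n(G) = \sum_{\prod n_\mfp = n}\prod_\mfp \widetilde{r}_{n_\mfp}(\bfG(\Gri_\mfp))$, and the Euler product \eqref{equ:euler} follows upon forming Dirichlet series, absolute convergence being guaranteed on some right half-plane by Lemma~\ref{lem:poly}.

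I expect the main obstacle, though a mild one, to be the careful bookkeeping needed to ensure that each of the above bijections respects \emph{twist}-equivalence rather than mere isomorphism. Concretely, one must verify that a continuous $1$-dimensional twist of a tensor product of irreducibles factors as the tensor product of the individual $1$-dimensional twists of the factors, which is precisely where the factorisation of $\Hom^{\textup{cts}}(\widehat{G},\C^\times)$ feeds back in; once this compatibility is in place, the Euler product assembles itself.
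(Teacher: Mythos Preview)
Your proposal is correct and follows essentially the same approach as the paper: the paper's proof is precisely the discussion preceding the proposition, invoking Lubotzky--Magid to pass to the profinite completion, the Congruence Subgroup Property and strong approximation to obtain the product decomposition~\eqref{equ:strong}, and the resulting factorisation of $\Hom^{\textup{cts}}(\widehat{\bfG(\Gri)},\C^{\times})$ to handle twists. You have merely spelled out in more detail the tensor-product decomposition of irreducibles over the profinite product and the twist-compatibility check, both of which the paper leaves implicit.
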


\begin{remark}
  We will show that the local factor $\zeta_{\bfG(\Gri_{\mfp})}(s)$ is
  in fact a rational function in $q^{-s}$ whenever the group scheme
  $\bfG$ is obtained from a Lie lattice in the sense defined in
  Section~\ref{subsubsec:group schemes} and $p$ is odd or $p=2$ and
  $c\neq3$; cf.\ Corollaries~\ref{cor:zeta=integral}
  and~\ref{cor:power series in q class 2}. We do not know whether
  these conditions are also necessary. In any case, it follows from
  work of Hrushovski and Martin that, for all rational primes $p$, the
  `mini Euler product' $\zeta_{\bfG(\Gri),p}(s)=\prod_{\mfp\vert
    p}\zeta_{\bfG(\Gri_{\mfp})}(s)$ is rational in~$p^{-s}$;
  see~\cite[Theorem 8.4]{HrushovskiMartin/07}.
\end{remark}
\subsubsection{Group schemes from Lie lattices}\label{subsubsec:group schemes}

Recall the notion of Lie lattice from Section~\ref{subsec:uniratfunc}.  Let $(\Lambda,[\,\cdot\,,\,\cdot\,])$ be a nilpotent $\Gri$-Lie
lattice of $\Gri$-rank $h$ and nilpotency class $c$. Choose an
$\Gri$-basis $(x_{1},\dots,x_{h})$ for $\Lambda$.  For any
$\Gri$-algebra~$R$, let $\Lambda(R):=\Lambda\otimes_{\Gri}R$.  Then
$(x_{1}\otimes1,\dots,x_{h}\otimes1)$ is an $R$-basis for
$\Lambda(R)$; cf.~\cite[XVI, Proposition~2.3]{Lang-Algebra}.  We write
$\Lambda'$ for the derived Lie lattice~$[\Lambda,\Lambda]$.

Assume that $\Lambda'\subseteq c!\Lambda$. By means of the Hausdorff
series one may define a group structure on $\Lambda(R)$ by setting,
for $x,y\in\Lambda(R)$, \begin{align*} x*y &
=x+y+\frac{1}{2}[x,y]+\frac{1}{12}[[x,y],y]+\cdots,\\ x^{-1} &
=-x;\end{align*} see~\cite[Chapter 9.2]{Khukhro/98}. The group
$(\Lambda(R),*)$ is nilpotent of class~$c$. In co-ordinates with
respect to the $R$-basis $(x_1\otimes 1,\dots,x_h\otimes 1)$ for
$\Lambda(R)$, the group operations are given by polynomials
over~$\Gri$ which are independent of~$R$. This defines a unipotent
group scheme $\bfG_{\Lambda}$ over~$\Gri$, isomorphic as a scheme to
affine $h$-space over $\Gri$, representing the group functor\[
R\longmapsto(\Lambda(R),*).\]

The group $\bfG_\Lambda(\Gri)$ is a $\T$-group of nilpotency class
$c$ and Hirsch length
$h(\bfG_\Lambda(\Gri))=\rk_{\Z}(\Gri)h=[K:\mathbb{Q}]h$.  If $R$ is a
finitely generated pro-$p$ ring, such as~$\Gri_{\mfp}$, the group
$\bfG_\Lambda(R)$ is a finitely generated class-$c$-nilpotent pro-$p$ group.

\begin{remark}
Let $G$ be a $\T$-group of nilpotency class~$c$. It is well-known that
there exists a $\Q$-Lie algebra $\mcL_G(\Q)$ of $\Q$-dimension $h(G)$,
and an injective mapping $\log:G\rightarrow\mcL_G(\Q)$ such that
$\log(G)$ spans $\mcL_G(\Q)$ over~$\Q$; cf., for instance,
\cite[Chapter 6]{Segal/83}. It is further known that there exists a
subgroup $H$ of $G$ of finite index such that $\log(H)$ is a $\Z$-Lie
lattice inside $\mcL_G(\Q)$ and $\log(H)'\subseteq c!\log(H)$. Thus
$H$ may be recovered as the group of $\Z$-points of the group scheme
defined by $\log(H)$, and it makes sense to study the $\Gri$-points of
this group scheme for extensions $\Gri$ of~$\Z$;
cf. Remark~\ref{rem:class 2} and \cite[Sections 1 and
5]{GSegal/84}. In particular, for all $p$ which do not divide the
index $|G:H|$, we have that~$\zeta_{G,p}(s)=\zeta_{H,p}(s)$.
\end{remark}

We close this section with a simple lemma which we will need in later
computations with coordinates. Let $Z(\Lambda) =
\{x\in\Lambda\mid[x,\Lambda]=0\}$ be the centre of~$\Lambda$. Let $R$
be either $\Gri$ or~$\lri$, and let $M$ be a finitely generated
$R$-module, and $N$ an $R$-submodule of $M$. We write $\iota(N)$ for
the \emph{isolator} of $N$ in $M$, that is the smallest submodule $L$ of $M$
containing $N$ such that $M/L$ is torsion-free. We say that $N$ is
\emph{isolated} in $M$ if $\iota(N)=N$, that is if $M/N$ is torsion-free. Note that if $M$ is torsion-free then $N$ is isolated in $M$ if and only if $N$ is a pure submodule of $M$.

\begin{lemma}\label{lem:Free centre}
The centre $Z(\Lambda)$ is isolated in $\Lambda$. Moreover, suppose
that $M$ is a free $\Gri$-module of finite rank and $N$ an isolated
submodule of  $M$. Then there exists a free finite index submodule
$N_{0}$ of $N$ and a free finite index submodule $M_{0}$ of $M$
containing $N_{0}$ such that there exists a basis for $N_{0}$
which can be extended to a basis for $M_{0}$.\end{lemma}
\begin{proof}
  Let $x\in\Lambda$. If $x+Z(\Lambda)\in\Lambda/Z(\Lambda)$ is torsion
  then there exists a non-zero element $a\in\Gri$ such that $ax\in
  Z(\Lambda)$. This implies that $[ax,\Lambda]=a[x,\Lambda]=0$, but
  since $\Lambda$ is torsion-free, this means that $[x,\Lambda]=0$,
  that is, $x\in Z(\Lambda)$.  We now prove the second claim. Note
  that, if $\Gri$ is a principal ideal domain, the claim follows in a
  straightforward way from the structure theory of modules over such
  rings. In general, we use well-known facts from the structure theory
  of finitely generated modules over Dedekind domains; see, for
  instance,~\cite[Chapter~10.6]{Cohn/03}.  Since torsion-free modules
  over a Dedekind domain are
  projective~\cite[Proposition~10.6.6]{Cohn/03}, the short exact
  sequence
\[
0\longrightarrow N\longrightarrow M\longrightarrow M/N\longrightarrow0
\]
splits, and so $M\cong N\oplus M/N$. By
\cite[Theorem~10.6.11]{Cohn/03} there exist non-zero ideals $I_{1}$
and $I_{2}$ of $\Gri$ such that $N\cong\Gri^{r-1}\oplus I_{1}$ and
$M/N\cong\Gri^{s-1}\oplus I_{2}$, where $r$ and $s$ are the $K$-ranks
of $N$ and $M/N$, respectively.  The $K$-rank of $M$ is then $r+s$ and
$M\cong\Gri^{r+s}$. Thus $\Gri^{r+s}\cong\Gri^{r-1}\oplus
I_{1}\oplus\Gri^{s-1}\oplus I_{2}\cong\Gri^{r+s-2}\oplus I_{1}\oplus
I_{2}\cong\Gri^{r+s-1}\oplus I_{1}I_{2}$, where for the last
isomorphism we have used \cite[(10.6.8)]{Cohn/03}. Invoking
\cite[Theorem~10.6.11]{Cohn/03} again yields that $\Gri\cong
I_{1}I_{2}$, so in particular $I_{1}I_{2}$ is a free $\Gri$-module
which is contained in both $I_{1}$ and $I_{2}$.  Hence
$\Gri^{r-1}\oplus I_{1}I_{2}$ is a free submodule of $N$ and
$\Gri^{s-1}\oplus I_{1}I_{2}$ is a free submodule of $M/N$. Let
\[
N_{0}:=\Gri^{r-1}\oplus I_{1}I_{2}\quad\mathrm{and}\quad M_{0}:=N_{0}\oplus(\Gri^{s-1}\oplus I_{1}I_{2}).
\]
Then $N/N_{0}\cong I_{1}/I_{1}I_{2}$ and $M/M_{0}\cong N/N_{0}\oplus I_{2}/I_{1}I_{2}$
are finite as sets because every non-zero ideal of $\Gri$ is of finite
index. Any basis of $N_{0}$ can obviously be extended to $M_{0}$,
and the lemma is proved.\end{proof}

\subsection{Kirillov orbit method, Poincar\'e series and $\mfp$-adic integration}\label{subsec:kirillov}
Let $\Lambda$ be a nilpotent $\Gri$-Lie lattice such that $\Lambda'\subseteq c!\Lambda$, and let
$\bfG=\bfG_\Lambda$ be the unipotent group scheme over~$\Gri$ associated to $\Lambda$ as in Section~\ref{subsubsec:group schemes}. Our aim in the current
section is to provide tools to study and compute the local factors
$\zeta_{\bfG(\Gri_{\mfp})}(s)$.

\subsubsection{Kirillov orbit method}\label{subsubsec:kirillov}
A key technical tool in our analysis is the Kirillov orbit method for
groups of the form~$\bfG(\Gri_\mfp)$. Where it is applicable, it
provides a way to construct the irreducible representations of a group
in terms of co-adjoint orbits. For the class of $\T$-groups, this
method was pioneered by Howe in~\cite{Howe-nilpotent/77}. A treatment
of the Kirillov orbit method for pro-$p$ groups can be found
in~\cite{Gonzalez/09}.

We now fix a non-zero prime ideal $\mfp$ of $\Gri$ and write
$\lri=\Gri_\mfp$. Consider the $\lri$-Lie lattice
$\mfg:=\Lambda(\lri)=\Lambda\otimes_\Gri\lri$, and its Pontryagin dual
$\widehat{\mfg}=\Hom_\Z^\textup{cont}(\mfg,\C^\times)$. For any
$\psi\in\widehat{\mfg}$ we have an associated alternating bi-additive
form \[ B_{\psi}:\mfg\times
\mfg\longrightarrow\mathbb{C}^{\times},\quad(x,y)\longmapsto\psi([x,y]).\]
The form $B_{\psi}$ clearly only depends on the restriction of $\psi$
to $\mfg'$, and if $\phi\in\widehat{\mfg'}$ we simply write $B_{\phi}$
for $B_{\tilde{\phi}}$, where $\tilde{\phi}$ is any element in
$\widehat{\mfg}$ such that $\tilde{\phi}|_{\mfg'}=\phi$.  The radical
of the form $B_{\psi}$ is $\mathrm{Rad}(B_{\psi}):=\{x\in
\mfg\mid\forall y\in
\mfg:B_{\psi}(x,y)=1\}=\{x\in\mfg\mid\psi([x,\mfg])=1\}$.  The
following is a refinement of~\cite[Corollary~3.1]{Voll/10}.

\begin{theorem}\label{thm:zeta radical}
 If $p$ is odd or $p=2$ and $c\geq4$, we have
 \begin{equation}\label{equ:zeta}
 \zeta_{\bfG(\lri)}(s) = \sum_{\psi\in \widehat{\mfg'}} |
 \mfg:\Rad(B_\psi)|^{-s/2} | \mfg : \mfg_{\psi,2} |^{-1},
 \end{equation}
 where $\mfg_{\psi,2} = \{x \in \mfg |\;\psi([x,\mfg'])=1\}$.
\end{theorem}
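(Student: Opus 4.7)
The plan is to apply the Kirillov orbit method to the pro-$p$ group $\bfG(\lri)$, whose validity in this nilpotent setting is guaranteed by the hypothesis $p>c$ (see, e.g.,~\cite{Howe-nilpotent/77, Gonzalez/09}), and then to track how twisting by continuous one-dimensional characters interacts with the coadjoint orbit parametrisation. The Kirillov correspondence furnishes a dimension-preserving bijection $\Omega\mapsto\rho_\Omega$ between the coadjoint $\bfG(\lri)$-orbits $\Omega\subseteq\widehat{\mfg}$ and $\Irr(\bfG(\lri))$, with $\dim(\rho_\Omega)^2=|\Omega|=|\mfg:\Rad(B_\psi)|$ for any $\psi\in\Omega$ (using that $B_\psi$, and hence $\Rad(B_\psi)$, depends only on $\psi|_{\mfg'}$, as noted just before the theorem).

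First I would identify twist-isoclasses of $\Irr(\bfG(\lri))$ with $\bfG(\lri)$-orbits in $\widehat{\mfg'}$. The continuous one-dimensional representations factor through $\mfg/\mfg'$ and correspond under Kirillov to the singleton orbits $\{\eta\}$ with $\eta$ in the subgroup $\widehat{\mfg/\mfg'}\subseteq\widehat{\mfg}$. Compatibility of the correspondence with tensoring sends $\rho_\Omega$ to $\rho_{\eta+\Omega}$, so two orbits give twist-equivalent representations precisely when they coincide under the dual restriction map $\pi:\widehat{\mfg}\to\widehat{\mfg'}$, whose kernel is $\widehat{\mfg/\mfg'}$. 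As $\bfG(\lri)$ acts trivially on $\widehat{\mfg/\mfg'}$, this common image is a single $\bfG(\lri)$-orbit $\bar\Omega\subseteq\widehat{\mfg'}$. Combining this with the well-definedness of $\Rad(B_\psi)$ on $\widehat{\mfg'}$ yields
\[
\zeta_{\bfG(\lri)}(s) \;=\; \sum_{\bar\Omega\subseteq\widehat{\mfg'}} |\mfg:\Rad(B_\psi)|^{-s/2} \;=\; \sum_{\psi\in\widehat{\mfg'}} \frac{|\mfg:\Rad(B_\psi)|^{-s/2}}{|\bar\Omega_\psi|},
\]
where $\bar\Omega_\psi$ denotes the $\bfG(\lri)$-orbit of $\psi$ in $\widehat{\mfg'}$.

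The remaining task is to identify $|\bar\Omega_\psi|=|\mfg:\mfg_{\psi,2}|$ via orbit-stabilizer. Since $p>c$, the Hausdorff exponential maps Lie sublattices of $\mfg$ bijectively onto closed subgroups of $\bfG(\lri)$, so it suffices to prove $\Stab_{\bfG(\lri)}(\psi)=\exp(\mfg_{\psi,2})$. Writing $g=\exp(x)$, the coadjoint action takes the form $(g\cdot\psi)(y)=\psi(e^{-\ad(x)}(y))$ for $y\in\mfg'$, so the fixed-point condition amounts to the vanishing of $\psi\circ\alpha$ on $\mfg'$, where $\alpha:=e^{-\ad(x)}-\mathrm{id}$ is the $\lri$-linear endomorphism sending the term $\gamma_i$ of the lower central series into $\gamma_{i+1}$. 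An induction descending through $\mfg'=\gamma_2\supseteq\gamma_3\supseteq\cdots\supseteq\gamma_{c+1}=0$ --- using that for $y\in\gamma_i$ one has $\alpha(y)\equiv-[x,y]\pmod{\gamma_{i+2}}$, while each iterated bracket $\ad(x)^k(y)$ with $k\geq2$ lies in $[x,\gamma_{i+k-1}]$ and is therefore known by the inductive hypothesis to lie in $\ker\psi$ --- shows that the vanishing of $\psi\circ\alpha$ on $\mfg'$ is equivalent to $\psi([x,\mfg'])=0$, i.e.\ to $x\in\mfg_{\psi,2}$.

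The main obstacle is this last step: the nilpotency-driven filtration argument that controls the higher-order Hausdorff terms in the coadjoint action. It is subtler than the familiar identification $\Stab_{\bfG(\lri)}(\tilde\psi)=\exp(\Rad(B_{\tilde\psi}))$ for full characters $\tilde\psi\in\widehat{\mfg}$, precisely because here only the restriction to $\mfg'$ needs to be stabilised, which explains why $\mfg_{\psi,2}$ is defined by pairing against $\mfg'$ rather than against all of $\mfg$. Once this is in place, orbit-stabilizer gives $|\bar\Omega_\psi|=|\bfG(\lri):\Stab_{\bfG(\lri)}(\psi)|=|\mfg:\mfg_{\psi,2}|$, and substitution into the displayed identity delivers the claimed formula.
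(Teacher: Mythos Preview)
Your proposal is correct and follows essentially the same route as the paper: invoke the Kirillov orbit method (via saturability for $p>c$), identify twist-isoclasses with coadjoint orbits in $\widehat{\mfg'}$, and recover the factor $|\mfg:\mfg_{\psi,2}|^{-1}$ as the inverse orbit size. The one difference is that the paper simply asserts the identity $|\bar\Omega_\psi|=|\mfg:\mfg_{\psi,2}|$ without proof, whereas you supply the descending lower-central-series induction that handles the higher Hausdorff terms; this extra detail is sound and strengthens the exposition.
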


\begin{proof}
 Assume first that $p>c$. The pro-$p$ group $\bfG(\lri)$ is then saturable and there
 exists a Kirillov correspondence between the finite co-adjoint orbits in the
 dual of the Lie algebra $\mfg$ and the continuous irreducible
 representations of $\bfG(\lri)$; cf.~\cite[Theorem~4.4]{Gonzalez/09}.

 Assume now that $p\leq c$. The condition $\Lambda'\subseteq
 c!\Lambda$ implies that $\mfg'\subseteq c!\mfg \subseteq p
 \mfg$. Furthermore, if $c\geq4$ the condition implies that
 $\mfg'\subseteq 4\mfg$. Applying the exponential map we obtain
 $\bfG(\lri)'\subseteq\bfG(\lri)^p$ and, if $c\geq4$,
 $\bfG(\lri)'\subseteq\bfG(\lri)^4$. Thus, if $p$ is odd or if $p=2$
 and $c\geq4$, we have that $\bfG(\lri)$ is a uniform pro-$p$
 group. For such groups there is again a Kirillov correspondence
 between the finite co-adjoint orbits in the dual of the Lie algebra
 $\mfg$ and the continuous irreducible representations of
 $\bfG(\lri)$; cf.~\cite[Section~2]{Jaikin/06}.

 Moreover, if $\Omega\subset\widehat{\mfg}$ is a finite co-adjoint orbit and
 $\psi\in\Omega$ then the dimension of the corresponding
 representation $\pi(\Omega)$ is given by
 $$\dim(\pi(\Omega)) =
 |\Omega|^{1/2}=|\bfG(\lri):\Stab_{\bfG(\lri)}(\psi)|^{1/2} = |\mfg :
 \Rad(B_{\psi})|^{1/2}.$$ The representation $\pi(\Omega)$ is obtained
 by inducing to $\bfG(\lri)$ the restriction of $\psi$ to a finite-index
 subgroup of~$\bfG(\lri)$. It follows that the twist-isoclass of the
 representation $\pi(\Omega)$ determines and is determined by the
 multiset of restrictions of the elements of $\Omega$ to~$\mfg'$. The
 number of distinct restrictions to $\mfg'$ in the orbit $\Omega$
 containing $\psi$ is $| \mfg : \mfg_{\psi,2} |$.
\end{proof}

\subsubsection{Poincar\'e series}\label{subsubsec:poincare}
In order to effectively compute the generating function
\eqref{equ:zeta} we express it in terms of Poincar\'e series. To this
end, we compute its terms in explicit coordinates. Write $\mfz$ for
the centre of~$\mfg$. By Lemma~\ref{lem:Free centre}, $\mfz$ is
isolated, that is, $\mfz=\iota(\mfz)$. Recall that $h =
\rk_\lri(\mfg)$ and set, in addition,
\begin{equation*}
d = \rk_\lri(\mfg'),\quad k =
\rk_\lri(\iota(\mfg')/\iota(\mfg'\cap\mfz))=
\rk_\lri(\iota(\mfg'+\mfz)/\mfz),\quad r-k =
\rk_\lri(\mfg/\iota(\mfg'+\mfz)),
\end{equation*}
so that $r=\rk_\lri(\mfg/\mfz)$. Note that the nilpotency class $c$ of
$\mfg$ is at most $2$ if and only if~$k=0$. We choose a
uniformiser~$\pi$ of~$\lri$, write $\overline{\phantom{x}}$ for the
natural surjection $\mfg\rightarrow\mfg/\mfz$, and choose an
$\lri$-basis
\begin{equation}\label{def:e}
  \bfe=(e_1,\dots,e_{r-k},\underbrace{e_{r-k+1},\dots,e_{r}}_{\iota(\overline{\mfg'+\mfz})},\overbrace{\underbrace{e_{r+1},\dots,e_{r-k+d}}_{\iota({\mfg'\cap\mfz})},e_{r-k+d+1},\dots,e_h}^{\mfz})
\end{equation}
for $\mfg$, as well as nonnegative integers $b_1,\dots,b_d$, such that
the following hold:
$$
\mfz = \langle e_{r+1},\dots,e_h \rangle_\lri
$$
\begin{align*}
\overline{\mfg'+\mfz} &=
  \langle
  \overline{\pi^{b_1}e_{r-k+1}},\dots,\overline{\pi^{b_k}e_{r}}\rangle_\lri&\iota(\overline{\mfg'+\mfz})
  &= \langle \overline{e_{r-k+1}},\dots,\overline{e_{r}}\rangle_\lri \\
\mfg'\cap\mfz &= \langle \pi^{b_{k+1}}e_{r+1},\dots,\pi^{b_d}e_{r-k+d}
  \rangle_\lri& \iota(\mfg'\cap\mfz) &= \langle
  e_{r+1},\dots,e_{r-k+d} \rangle_\lri.
\end{align*}
The existence of such integers is a consequence of the elementary
divisor theorem. We choose an $\lri$-basis
$\bff=(f_1,\dots,f_d)$
for $\mfg'$ such that
\begin{align*}
 (\ol{f_1},\dots,\ol{f_k}) &=
  (\ol{\pi^{b_1}e_{r-k+1}},\dots,\ol{\pi^{b_k}e_r}),\\ (f_{k+1},\dots,f_d)
    &= (\pi^{b_{k+1}}e_{r+1},\dots,\pi^{b_d}e_{r-k+d}).
\end{align*}
The structure constants $\lambda_{ij}^l\in\lri$ for $\mfg$, where
$i,j\in [r]$ and $l\in[d]$, with respect to these bases are defined by
$[e_i,e_j]=\sum_{l=1}^d\lambda_{ij}^lf_l$ and are encoded in the
\emph{commutator matrix}
$$\calR(\bfY) =
\left(\sum_{l=1}^d\lambda_{ij}^l
Y_l\right)_{ij}\in\Mat_{r}(\lri[\bfY]).$$ We define the submatrix
$$\calS(\bfY) = \left(\calR(\bfY)_{ij}\right)_{i\in[r],
  j\in\{r-k+1,\dots,r\}}\in \Mat_{r\times k}(\lri[\bfY]),$$ comprising
the last $k$ columns of $\calR(\bfY)$. As in \cite[Lemma 2.4]{AKOVI/10},
we write
$$\widehat{\mfg'} \cong \bigcup _{N\in\N_0} \Irr_N(\mfg'),$$ with
$\Irr_N(\mfg') = \widehat{\mfg'/\mfp^N\mfg'} \cong
\Hom_\lri(\mfg',\lri/\mfp^N)^*$. Let $N\in\N_0$. We say that
$w\in\Hom_\lri(\mfg',\lri)^*$ is a representative of $\psi \in
\Irr_N(\mfg')$ if $\psi$ is the image of $w$ under the natural
surjection $\Hom_\lri(\mfg',\lri)^* \rightarrow
\Hom_\lri(\mfg',\lri/\mfp^N)^*$. The $\lri$-basis ${\bf e}$ yields a
co-ordinate system $\overline{\mfg}=\mfg/\mfz \cong \lri^{r}, z
\mapsto \underline{z} = (z_1,\dots,z_{r})$. The dual basis
$\bff^\vee$, on the other hand, gives a co-ordinate system
$\Hom_\lri(\mfg',\lri)^*\cong (\lri^d)^*, w \mapsto
\underline{w}=(w_1,\dots,w_d)$. The following is proved in a way
similar to \cite[Lemma~3.3]{AKOVI/10}, and generalises the analysis
in~\cite[Section~3.4]{Voll/10}.
\begin{lemma}
 Let $w\in\Hom_\lri(\mfg',\lri)^*$ and $N\in\N_0$. Consider the
 element $\psi\in\Irr_N(\mfg')$ represented by $w$. Then, for every
 $z\in \mfg/\mfz$, we have
  \begin{align*}
   z \in \overline{\Rad(B_\psi)} &\Longleftrightarrow \underline{z} \cdot
   \calR(\underline{w}) \equiv 0 \mod \mfp^N\text{ and }\\ z \in
   \overline{\mfg_{\psi,2}} &\Longleftrightarrow \underline{z} \cdot
   \calS(\underline{w})\cdot\diag(\pi^{b_1},\dots,\pi^{b_k}) \equiv 0
   \mod \mfp^N.
  \end{align*}
\end{lemma}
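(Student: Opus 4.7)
The plan is to unpack the definitions of $\Rad(B_\psi)$ and $\mfg_{\psi,2}$ directly in the chosen coordinates, using that the character $\psi \in \Irr_N(\mfg')$ is the image of $w \in \Hom_\lri(\mfg',\lri)^\ast$ under the natural surjection onto $\Hom_\lri(\mfg',\lri/\mfp^N)^\ast$. Throughout, lift $\bar z \in \mfg/\mfz$ to an element $z = \sum_{i=1}^{r} z_i e_i \in \mfg$; this is harmless since $[z,\mfg]$ depends only on the class of $z$ modulo $\mfz$.

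First I would treat the radical. Since $e_{r+1},\dots,e_h$ span $\mfz$, the condition $\psi([z,\mfg]) = 1$ reduces to $\psi([z,e_j]) = 1$ for $j \in [r]$. Expanding in the basis $\bff$ of $\mfg'$ via the structure constants $\lambda_{ij}^l$ yields
\begin{equation*}
[z,e_j] = \sum_{i=1}^{r}\sum_{l=1}^{d} z_i \lambda_{ij}^l f_l,
\end{equation*}
and applying $\psi$ (equivalently $w$ modulo $\mfp^N$, upon identifying $\psi$ with a homomorphism into $\lri/\mfp^N$ via a fixed additive character) gives
\begin{equation*}
\psi([z,e_j]) \equiv \sum_{l=1}^d \left(\sum_{i=1}^r z_i \lambda_{ij}^l\right)\! w_l
= \bigl(\underline{z}\cdot \calR(\underline{w})\bigr)_{\! j} \pmod{\mfp^N}.
\end{equation*}
Requiring this to vanish for all $j \in [r]$ is exactly the matrix congruence $\underline{z}\cdot\calR(\underline{w}) \equiv 0 \bmod \mfp^N$.

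For $\mfg_{\psi,2}$, I would test against the basis $(f_1,\dots,f_d)$ of $\mfg'$. By construction of $\bff$, the elements $f_{k+1},\dots,f_d$ lie in $\mfz$, so they produce no condition. For $l \in [k]$ one has $f_l \equiv \pi^{b_l} e_{r-k+l} \bmod \mfz$, hence $[z,f_l] = \pi^{b_l}[z,e_{r-k+l}]$, and the computation above gives
\begin{equation*}
\psi([z,f_l]) \equiv \pi^{b_l}\bigl(\underline{z}\cdot\calR(\underline{w})\bigr)_{\!r-k+l} \pmod{\mfp^N}.
\end{equation*}
Since the $k$ columns of $\calR(\underline{w})$ indexed by $r-k+1,\dots,r$ form $\calS(\underline{w})$, assembling the $k$ conditions into a row vector equation yields $\underline{z}\cdot\calS(\underline{w})\cdot\diag(\pi^{b_1},\dots,\pi^{b_k}) \equiv 0 \bmod \mfp^N$, as claimed.

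The only delicate point is the passage from $\psi$ to the linear form $w$: one must fix, once and for all, a level-compatible additive character identifying $\Hom_\lri(\mfg',\lri/\mfp^N)$ with the level-$N$ characters of $\mfg'$, so that vanishing of $\psi$ on an element corresponds to that element lying in $\mfp^N\mfg'$. This is standard and proceeds exactly as in \cite[Lemma~3.3]{AKOVI/10}; once it is in place, the rest is bookkeeping with the commutator matrix and the elementary-divisor form of the basis $\bfe,\bff$.
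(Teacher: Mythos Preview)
Your argument is correct and proceeds exactly along the lines the paper intends: the paper does not spell out a proof but simply refers to \cite[Lemma~3.3]{AKOVI/10}, and your direct unpacking of $\Rad(B_\psi)$ and $\mfg_{\psi,2}$ in the coordinates $\bfe,\bff$ is precisely that computation. The only cosmetic point is that the paper's text describes $\calS(\bfY)$ as ``the last $k$ rows'' while its displayed definition (and your argument) uses the last $k$ columns; since $\calR$ is antisymmetric this makes no difference to the congruence conditions.
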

We say that a matrix $S\in\Mat_{r\times k}(\lri)$ has (elementary
divisor) type $\bfc=(c_1,\dots,c_k)\in(\N_0\cup\{\infty\})^k$ --
written $\widetilde{\nu}(S)=\bfc$ -- if $S$ is equivalent by
elementary row and column operations to the $r\times k$-matrix
$$\left(\begin{matrix}\pi^{c_1}&&\\&\ddots&\\&&\pi^{c_k}\\&&\end{matrix}\right),$$
where $0\leq c_1\leq \dots \leq c_k$. This is a variant of the
definition of the type $\nu(R)$ of an antisymmetric matrix
$R\in\Mat_{r}(\lri)$ given in~\cite[Section 3.1]{AKOVI/10}. By
definition, we have
$\nu(R)=(a_1,\dots,a_{\lfloor r/2\rfloor})\in(\N_0\cup\{\infty\})^{\lfloor r/2\rfloor}$,
where $0\leq a_1 \leq \dots \leq a_{\lfloor r/2\rfloor}$ if
$$\widetilde{\nu}(R) = \begin{cases}
  (a_1,a_1,\,a_2,a_2,\,\dots,a_{r/2},a_{r/2})&\text{ if $r$ is
    even,}\\ (a_1,a_1,\,a_2,a_2,\,\dots,a_{(r-1)/2},a_{(r-1)/2},\infty)&\text{
    if $r$ is odd.}\end{cases}$$

The definition of $\nu$ takes into account the fact that the
elementary divisors of an antisymmetric matrix of even size come in
pairs.  Analogously to \cite[Lemma~3.4]{AKOVI/10} we have the
following.
\begin{lemma}
Let $w\in\Hom_\lri(\mfg',\lri)^*$ and $N\in\N_0$. Consider the element
$\psi\in\Irr_N(\mfg')$ represented by $w$. Let
$\bfa:=(a_1,\dots,a_{\lfloor r/2 \rfloor})=\nu(\calR(\underline{w}))$ and
$\bfc:=(c_1,\dots,c_k)=\widetilde{\nu}(\calS(\underline{w})\cdot\diag(\pi^{b_1},\dots,\pi^{b_k}))$. Then
\begin{align*}
 | \mfg : \Rad(B_{\psi})| &= q^{2\sum_{i=1}^{\lfloor r/2
     \rfloor}(N-\min\{a_i,N\})}\quad\text{ and}\\
 | \mfg : \mfg_{\psi,2} | &= q^{\sum_{i=1}^k(N-\min\{c_i,N\})}.
\end{align*}
\end{lemma}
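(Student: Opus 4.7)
The plan is to compute both indices as indices of sublattices of $\mfg/\mfz$, and then to apply the elementary divisor theorem. First I would observe that $\mfz$ is contained in both $\Rad(B_\psi)$ and $\mfg_{\psi,2}$: for $x\in\mfz$ the commutator $[x,\mfg]$ vanishes, so the defining conditions $\psi([x,\mfg])=1$ and $\psi([x,\mfg'])=1$ are trivially satisfied. Hence $|\mfg:\Rad(B_\psi)|$ and $|\mfg:\mfg_{\psi,2}|$ coincide with the corresponding indices in $\mfg/\mfz\cong\lri^{r}$, and the preceding lemma describes these image sublattices explicitly as the solution sets in $\underline{z}\in\lri^{r}$ of the respective matrix congruences modulo $\mfp^{N}$.

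The core technical input would be a general lemma asserting that, for any $M\in\Mat_{r\times s}(\lri)$, the sublattice $L_{M}:=\{\underline{z}\in\lri^{r}:\underline{z}\cdot M\in\mfp^{N}\lri^{s}\}$ has index $\prod_{i}q^{N-\min(d_{i},N)}$ in $\lri^{r}$, where the $d_{i}$ are the elementary divisor exponents of $M$ (with the convention that a zero row in the Smith normal form corresponds to $d_{i}=\infty$, contributing the factor $q^{0}=1$). I would prove this by writing $M=UDV$ in Smith normal form with $U\in\GL_{r}(\lri)$ and $V\in\GL_{s}(\lri)$, and performing the coordinate change $\underline{z}':=\underline{z}\cdot U$, under which the condition becomes the componentwise constraint $z'_{i}\in\mfp^{\max(0,N-d_{i})}$. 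Applied to $M=\calS(\underline{w})\diag(\pi^{b_{1}},\dots,\pi^{b_{k}})$, whose elementary divisor type equals $\bfc$ by definition, this yields the second formula at once.

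For the first formula, $M=\calR(\underline{w})$ is antisymmetric, and here the essential additional ingredient is the classical fact that the elementary divisors of an antisymmetric matrix over a local principal ideal domain occur in equal pairs, with at most one extra infinite divisor in odd dimension. This is precisely the relationship between $\widetilde{\nu}$ and $\nu$ recorded in the excerpt: $\widetilde{\nu}(\calR(\underline{w}))$ equals $(a_{1},a_{1},\dots,a_{\lfloor r/2\rfloor},a_{\lfloor r/2\rfloor})$ when $r$ is even, with a trailing $\infty$ appended when $r$ is odd. Substituting into the general lemma produces the factor of $2$ in the exponent, while the possible infinite entry contributes trivially. The only step that is not entirely mechanical is this antisymmetric pairing, which is what motivates the definition of $\nu$ as half of $\widetilde{\nu}$; everything else reduces to routine Smith normal form bookkeeping over the discrete valuation ring $\lri$.
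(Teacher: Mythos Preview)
Your proposal is correct and follows the standard route: reduce modulo the center, invoke the preceding lemma's matrix congruence description, and compute the index of the solution lattice via Smith normal form, using the pairing of elementary divisors in the antisymmetric case to account for the factor of $2$. The paper itself does not spell out a proof but simply records that the statement is analogous to \cite[Lemma~3.4]{AKOVI/10}; your argument is precisely the one underlying that reference.
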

Let $N\in\N_0$. Given an antisymmetric matrix
$\overline{R}\in\Mat_{r}(\lri/\mfp^N)$, we set
$\nu(\overline{R}):=(\min\{a_i,N\})_{i\in[\lfloor r/2
    \rfloor]}\in([N]_0)^{\lfloor r/2 \rfloor}$, where $\bfa=\nu(R)$ is
the type of any lift $R$ of $\overline{R}$ under the natural
surjection $\Mat_r(\lri)\rightarrow\Mat_r(\lri/\mfp^N)$. Given
$\overline{S}\in\Mat_{r\times k}(\lri/\mfp^N)$, the vector
$\widetilde{\nu}(\overline{S})\in([N]_0)^k$ is defined similarly. We
set $W_N(\lri) := \left((\lri/\mfp^N)^d\right)^*$.

Given $N\in\N_0$, $\bfa\in\N_0^{\lfloor r/2 \rfloor},
 \bfc\in\N_0^k$, we set
\begin{equation}\label{def:N}
 \calN^\lri_{N,\bfa,\bfc} := \# \left\{\bfy \in W_N(\lri) \vert \;
 \nu(\calR(\bfy))=\bfa, \, \widetilde{\nu}(\calS(\bfy)\cdot
 \diag(\pi^{b_1},\dots,\pi^{b_k}))=\bfc \right\}.
\end{equation}
In analogy with \cite[Proposition~3.1]{AKOVI/10} we have the following.
\begin{proposition}\label{pro:zeta=poincare}
If $p$ is odd or if $p=2$ and $c\geq4$, then
\begin{equation}\label{def:poincare}
 \zeta_{\bfG(\lri)}(s) =
 \sum_{\stackrel{N\in\N_0,}{\bfa\in\N_0^{\lfloor r/2\rfloor},
     \,\bfc\in\N_0^k}}\calN^\lri_{N,\bfa,\bfc}q^{-\sum_{i=1}^{\lfloor
     r/2 \rfloor}(N-a_i)s -
   \sum_{i=1}^k(N-c_i)}=:\calP_{\calR,\calS,\lri}(s).
\end{equation}
\end{proposition}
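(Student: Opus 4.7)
The plan is to combine Theorem~\ref{thm:zeta radical} with the two coordinate-level lemmas stated immediately before Proposition~\ref{pro:zeta=poincare}, and to organise the resulting sum by stratifying $\widehat{\mfg'}$ and grouping characters by the elementary divisor types of the associated matrices $\calR(\underline{w})$ and $\calS(\underline{w})\cdot\diag(\pi^{b_1},\dots,\pi^{b_k})$.

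First, since $p>c$ Theorem~\ref{thm:zeta radical} applies and gives
\begin{equation*}
\zeta_{\bfG(\lri)}(s) = \sum_{\psi\in\widehat{\mfg'}} |\mfg:\Rad(B_\psi)|^{-s/2}\,|\mfg:\mfg_{\psi,2}|^{-1}.
\end{equation*}
The plan is to partition $\widehat{\mfg'}$ according to the decomposition $\widehat{\mfg'}=\bigcup_{N\in\N_0}\Irr_N(\mfg')$, where the starred identification $\Irr_N(\mfg')\cong\Hom_\lri(\mfg',\lri/\mfp^N)^*$ ensures that each $\psi$ is captured exactly once, namely at the smallest $N$ such that $\psi$ factors through $\mfg'/\mfp^N\mfg'$. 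Via the dual basis $\bff^\vee$, each $\psi\in\Irr_N(\mfg')$ corresponds bijectively to a primitive coordinate vector $\bfy\in W_N(\lri)=((\lri/\mfp^N)^d)^*$.

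Next I would substitute the two index formulae. For $\psi\in\Irr_N(\mfg')$ represented by $w\in\Hom_\lri(\mfg',\lri)^*$ with reduction $\bfy\in W_N(\lri)$, the second lemma above gives
\begin{equation*}
|\mfg:\Rad(B_\psi)| = q^{2\sum_{i=1}^{\lfloor r/2\rfloor}(N-\min\{a_i,N\})},\qquad |\mfg:\mfg_{\psi,2}| = q^{\sum_{i=1}^{k}(N-\min\{c_i,N\})},
\end{equation*}
where $\bfa=\nu(\calR(\underline{w}))$ and $\bfc=\widetilde\nu(\calS(\underline{w})\cdot\diag(\pi^{b_1},\dots,\pi^{b_k}))$. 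Crucially, after taking $\min\{\cdot,N\}$ these data depend only on the reductions modulo $\mfp^N$, hence only on $\bfy\in W_N(\lri)$. Using the convention on $\nu$, $\widetilde\nu$ of matrices over $\lri/\mfp^N$ introduced just before the definition of $\calN^\lri_{N,\bfa,\bfc}$, I can replace $\min\{a_i,N\}$ and $\min\{c_i,N\}$ by $a_i$ and $c_i$ respectively, now understood as entries in $[N]_0$.

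Finally, I would group the sum by type: the contribution to $\zeta_{\bfG(\lri)}(s)$ from the $\psi\in\Irr_N(\mfg')$ whose coordinate vector $\bfy$ satisfies $\nu(\calR(\bfy))=\bfa$ and $\widetilde\nu(\calS(\bfy)\cdot\diag(\pi^{b_1},\dots,\pi^{b_k}))=\bfc$ is, by definition of $\calN^\lri_{N,\bfa,\bfc}$,
\begin{equation*}
\calN^\lri_{N,\bfa,\bfc}\,q^{-\sum_{i=1}^{\lfloor r/2\rfloor}(N-a_i)s-\sum_{i=1}^{k}(N-c_i)}.
\end{equation*}
Summing over $N\in\N_0$ and all admissible $\bfa,\bfc$ yields the asserted identity. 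The only real obstacle is bookkeeping: one must verify that the starred convention on $\Hom_\lri(\mfg',\lri/\mfp^N)^*$ indeed counts each $\psi\in\widehat{\mfg'}$ exactly once (this is the same device used in \cite[Lemma~2.4, Proposition~3.1]{AKOVI/10}), and that the truncation passing from $\bfa,\bfc\in\N_0^{\ast}$ to min-truncated types does not create overcounting across different strata~$N$. Both points are by now standard in this circle of ideas.
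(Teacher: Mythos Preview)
Your proposal is correct and follows precisely the route the paper indicates: it assembles Theorem~\ref{thm:zeta radical}, the stratification $\widehat{\mfg'}\cong\bigcup_{N}\Irr_N(\mfg')$, and the two index lemmas preceding the proposition, then groups by elementary divisor type to recover the Poincar\'e series. The paper itself gives no explicit proof, referring instead to the analogy with \cite[Proposition~3.1]{AKOVI/10}; your argument is exactly the unpacking of that analogy, and your closing remarks about the starred convention and the absence of overcounting are the two points that need checking and are indeed standard.
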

\begin{corollary}\label{cor:power series in q}
Under the hypothesis of Proposition~\ref{pro:zeta=poincare} the zeta function $\zeta_{\bfG(\lri)}(s)$ is a power
series in $q^{-s}$, that is the degrees of continuous irreducible
complex representations of $\bfG(\lri)$ are powers of $q$.
\end{corollary}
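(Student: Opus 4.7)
The plan is to derive the corollary directly from Proposition~\ref{pro:zeta=poincare} together with the preceding lemma expressing $|\mfg:\Rad(B_\psi)|$ in terms of elementary divisor data. The hypothesis $p>c$ is used only to invoke the Kirillov orbit method, and hence Theorem~\ref{thm:zeta radical} and Proposition~\ref{pro:zeta=poincare}; once these are in place, everything else is a matter of reading off exponents.

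First I would recall that, by Theorem~\ref{thm:zeta radical}, each continuous irreducible representation of $\bfG(\lri)$ has dimension $|\mfg:\Rad(B_\psi)|^{1/2}$ for some $\psi$ in the appropriate coadjoint orbit. By the lemma computing $|\mfg:\Rad(B_\psi)|$ via the type of $\calR(\underline{w})$, this dimension equals $q^{\sum_{i=1}^{\lfloor r/2\rfloor}(N-\min\{a_i,N\})}$, which is an integer power of $q$ since each $\min\{a_i,N\}\in[N]_0$. This already establishes that the representation degrees occurring in $\zeta_{\bfG(\lri)}(s)$ are powers of $q$, which is the second, equivalent, formulation of the corollary.

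To translate this into the power-series statement, I would then inspect the Poincar\'e-type expansion
\[
\zeta_{\bfG(\lri)}(s) = \sum_{N,\bfa,\bfc}\calN^\lri_{N,\bfa,\bfc}\,q^{-\sum_{i=1}^{\lfloor r/2\rfloor}(N-a_i)s - \sum_{i=1}^k(N-c_i)}
\]
provided by Proposition~\ref{pro:zeta=poincare}. In the defining set \eqref{def:N}, the truncated elementary divisors satisfy $a_i\in[N]_0$ and $c_j\in[N]_0$, so the exponent $\sum_{i=1}^{\lfloor r/2\rfloor}(N-a_i)$ of $-s$ in each summand is a non-negative integer. Thus every contributing monomial is of the form $q^m(q^{-s})^n$ with $m\in\Z$ and $n\in\N_0$, and grouping these monomials by the value of $n$ expresses $\zeta_{\bfG(\lri)}(s)$ as a formal power series in $q^{-s}$.

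There is essentially no obstacle beyond correctly bookkeeping the truncations built into the definitions of $\calN^\lri_{N,\bfa,\bfc}$ and of $\nu(\calR(\underline{w}))$; the hypothesis $p>c$ enters only through the applicability of the Kirillov orbit method underlying Proposition~\ref{pro:zeta=poincare}.
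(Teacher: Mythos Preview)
Your proposal is correct and matches the paper's own treatment: the paper states this as an immediate corollary of Proposition~\ref{pro:zeta=poincare} without further proof, and your argument simply spells out the obvious reasoning, namely that in each summand of~\eqref{def:poincare} the exponent of $q^{-s}$ is $\sum_i(N-a_i)\in\N_0$ because the truncated type $\nu(\calR(\bfy))$ of a matrix over $\lri/\mfp^N$ has entries in $[N]_0$.
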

It is interesting to ask whether the conclusion of
Corollary~\ref{cor:power series in q} holds without the hypothesis
of Proposition~\ref{pro:zeta=poincare}.

\subsubsection{$\mfp$-Adic integration}\label{subsubsec:integration}
As explained in \cite[Section~2.2]{Voll/10}, we can express the
Poincar\'e series $\calP_{\calR,\calS,\lri}(s)$ defined
in~\eqref{def:poincare} in terms of a $\mfp$-adic integral. We define
\begin{multline}
\calZ_\lri(\rho,\sigma,\tau) := \\ \int_{(x,\bfy)\in\mfp\times
  W(\lri)}|x|_\mfp^\tau \prod_{j=1}^u\frac{\| F_j(\bfy) \cup
  F_{j-1}(\bfy)x^2\|_\mfp^\rho}{\| F_{j-1}(\bfy)
  \|_\mfp^\rho}\prod_{l=1}^v\frac{\| G_l(\bfy) \cup
  G_{l-1}(\bfy)x\|_\mfp^\sigma}{\| G_{l-1}(\bfy)
  \|_\mfp^\sigma}d\mu(x,\mathbf{y}),\label{equ:integral}
\end{multline}
where $W(\lri)=(\lri^d)^*$, the additive Haar measure $\mu$ on
$\lri^{d+1}$ is normalised so that $\mu(\lri^{d+1}) = 1$, and
\begin{align*}
  2u & = \max \{ \rk_{\textrm{Frac}(\lri)}(\mathcal{R}(\bfz)) \mid \bfz \in
  \lri^{d} \}, \nonumber \\
  v &= \max \{ \rk_{ \textrm{Frac}(\lri)}(\mathcal{S}(\bfz)) \mid \bfz \in
  \lri^{d} \}, \nonumber \\
  F_j(\bfY) & = \{ f \mid f=f(\bfY)
  \text{ a principal $2j \times 2j$ minor of $\mathcal{R}(\bfY)$} \},
  \nonumber \\
  G_l(\bfY) &= \left\{ g \mid g=g(\bfY) \text{ an $l\times l$ minor of
  }\calS(\bfY)\cdot\diag(\pi^{b_1},\dots,\pi^{b_k})\right\},\\ \lVert
  H(X,\bfY) \rVert_\mfp & = \max \left\{ \lvert h(X,\bfY) \rvert_\mfp \mid
  h \in H \right\} \text{ for a finite set }H\subset\lri[X,\bfY].\nonumber
\end{align*}
As in \cite[Section~2.2]{Voll/10} one shows that
\[
\calP_{\calR,\calS,\lri}(s) = 1 +
(1-q^{-1})^{-1}\calZ_\lri(-s/2,-1,u s+v-d-1).
\]
This yields the following corollary to
Proposition~\ref{pro:zeta=poincare}.
\begin{corollary}\label{cor:zeta=integral}
Under the hypothesis of Proposition~\ref{pro:zeta=poincare} we have
$$\zeta_{\bfG(\lri)}(s) = 1 + (1-q^{-1})^{-1}\calZ_\lri(-s/2,-1,u
s+v-d-1).$$ In particular, the zeta function $\zeta_{\bfG(\lri)}(s)$
is a rational function in $q^{-s}$.
\end{corollary}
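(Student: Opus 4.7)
The plan is to follow the recipe of \cite[Section~2.2]{Voll/10}, combining Proposition~\ref{pro:zeta=poincare} with the standard dictionary that translates elementary divisor data into valuations of minors. By Proposition~\ref{pro:zeta=poincare}, it suffices to rewrite $\calP_{\calR,\calS,\lri}(s)$ as the claimed integral. The key observation is that, for an antisymmetric matrix $R\in\Mat_r(\lri)$, the components of $\nu(R)$ record the $\mfp$-adic valuations of (quotients of gcds of) the principal $2j\times 2j$ minors of $R$: if $F_j(R)$ denotes this set of minors, then $\sum_{i\leq j}\nu(R)_i=v_\mfp(\gcd F_j(R))$, with the analogous statement for $\widetilde{\nu}$ in terms of the $\ell\times\ell$ minors $G_\ell$ of $\calS(\cdot)\cdot\diag(\pi^{b_1},\dots,\pi^{b_k})$. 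This is the content of the elementary divisor theorem, refined so as to preserve antisymmetry.

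Given this, I would unfold the definition~\eqref{def:N} of $\calN^\lri_{N,\bfa,\bfc}$ by writing the sum over $N\in\N_0$ and $\bfa,\bfc$ as an integration over an auxiliary variable $x\in\mfp$ parameterising the level $N=v_\mfp(x)$, combined with the integral over $\bfy\in W(\lri)$ that lifts the reduction mod $\mfp^N$. Truncating a minor $f(\bfy)$ at level $N$ is precisely captured by the ratio $\|f(\bfy)\cup x^{\text{size}}\|_\mfp/\|1\|_\mfp$, so that the telescoping products over $j=1,\dots,u$ and $\ell=1,\dots,v$ in the integrand of $\calZ_\lri(\rho,\sigma,\tau)$ assemble exactly the exponents $\sum(N-a_i)$ and $\sum(N-c_i)$ appearing in the Poincaré series. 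Setting $\rho=-s/2$, $\sigma=-1$, and choosing $\tau=us+v-d-1$ to account for the Jacobian of the change of variables and the normalisation of $\mu$ over $W(\lri)$ vs.\ $\lri^d$, the factor $(1-q^{-1})^{-1}$ arises from integrating out the radial variable on $W(\lri)$. The bookkeeping, including the initial term $1$ that corresponds to $N=0$, is identical to that in \cite[Section~2.2]{Voll/10}, with the only new ingredient being the presence of the rectangular block $\calS$ alongside the antisymmetric block $\calR$.

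For the rationality claim, once the integral representation is established, I would invoke Denef's theorem on the rationality of $\mfp$-adic integrals whose integrands are built from maxima of $\mfp$-adic absolute values of polynomials with coefficients in $\lri$: $\calZ_\lri(\rho,\sigma,\tau)$ is a rational function in $q^{-\rho}$, $q^{-\sigma}$, $q^{-\tau}$, and hence in $q^{-s}$ after specialisation. The main subtlety in the argument is the first one, namely making sure that the mixed data $(\bfa,\bfc)$ arising from one antisymmetric and one rectangular matrix (with columns scaled by the elementary divisors of $\mfg'\cap\mfz$ inside $\iota(\mfg'\cap\mfz)$) can be simultaneously captured by the single product-of-norms integrand of~\eqref{equ:integral}; this is essentially a matter of choosing the exponents on the two families $F_j$ and $G_\ell$ correctly and is where the novelty over~\cite{Voll/10} lies.
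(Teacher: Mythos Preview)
Your proposal is correct and follows the same route as the paper: the paper simply records that $\calP_{\calR,\calS,\lri}(s)=1+(1-q^{-1})^{-1}\calZ_\lri(-s/2,-1,us+v-d-1)$ ``as in \cite[Section~2.2]{Voll/10}'', combines this with Proposition~\ref{pro:zeta=poincare}, and then invokes Denef's rationality theorem~\cite{Denef/91}; you have unpacked exactly these ingredients. One small correction: the factor $(1-q^{-1})^{-1}$ does not come from the $\bfy$-integration over $W(\lri)$ but from the auxiliary variable $x\in\mfp$ encoding the level~$N$, since $\mu(\{x:v_\mfp(x)=N\})=q^{-N}(1-q^{-1})$.
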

Indeed, the rationality in $q^{-s}$ of integrals
like~\eqref{equ:integral} is a well known fact in the theory of
$\mfp$-adic integration; cf., for instance,~\cite{Denef/91}.

\subsection{Proof of Theorem~\ref{thmABC:thm A}}\label{subsec:proof thm A}

We now return to the global setup of Theorem~\ref{thmABC:thm
  A}. Recall that $\bfG=\bfG_\Lambda$ is a unipotent group scheme,
  defined by a nilpotent $\Gri$-Lie lattice $\Lambda$ of nilpotency
  class $c$, where $\Gri=\Gri_K$ is the ring of integers of a number
  field $K$. For a finite extension $L$ of $K$, with ring of integers
  $\Gri_L$, we wish to describe the zeta function of the $\T$-group
  $\bfG(\Gri_L)$.  By Proposition \ref{pro:Fine Euler prod} we have
\begin{equation}\label{equ:euler extension}
 \zeta_{\bfG(\Gri_L)}(s) = \prod_{\mfP}\zeta_{\bfG(\Gri_{L,\mfP})}(s),
\end{equation}
where the product ranges over the non-zero prime ideals of~$\Gri_L$.
For such a prime ideal $\mfP$ of $\Gri_L$, dividing the prime ideal
$\mfp$ of $\Gri$, we write $\Lri$ for the local ring $\Gri_{L,\mfP}$
and $\lri$ for~$\Gri_{K,\mfp}$. Further we write $f=f(\Lri,\lri)$ for
the relative degree of inertia. We continue to write $q$ for the
residue field cardinality of $\lri$, and $p$ for its residue field
characteristic, so that $\vert \Lri/\mfP \vert =q^f$.

Assume that $p$ is odd or that $p=2$ and $c\geq4$. The $\mfp$-adic formalism developed in
Sections~\ref{subsec:kirillov} is applicable to the factor
$\zeta_{\bfG(\Lri)}(s)$ in~\eqref{equ:euler extension}. Two facts are
key to proving Theorem~\ref{thmABC:thm A}: Firstly, we observe that
the polynomials occurring in the integrand of the $\mfp$-adic
integral~\eqref{equ:integral} are defined over~$\Gri$, so that
effectively only the domain of integration depends on
$\Lri$. Secondly, we exploit that there is, as we shall explain, a
uniform formula for~\eqref{equ:integral} in which only the residue
field of $\Lri$ enters.

A priori, the $\Lri$-bases ${\bf e}$ and ${\bf f}$ defined in
Section~\ref{subsubsec:poincare} -- and thus the matrices
$\calR(\bfY)$ and $\calS(\bfY)$ and the data $\bfb=(b_1,\dots,b_d)$ --
are defined only locally. As we do not assume that $\Gri$ or $\Gri_L$
are principal ideal domains, we may not hope for a global analogue of
the construction of the bases ${\bf e}$ and ${\bf f}$. Instead, we
choose an $\Gri$-basis ${\bf f}=(e_{r-k+1},\dots,e_{r-k+d})$ for
a free finite index $\Gri$-submodule of the $\Gri$-isolator
$\iota(\Lambda'(\Gri))$, and extend it to an $\Gri$-basis ${\bf e}$
for a free finite index $\Gri$-submodule of $\Lambda(\Gri)$;
cf.\ Lemma~\ref{lem:Free centre}. Provided $p$ does not divide the
index of the latter in $\Lambda(\Gri)$, we may use this basis ${\bf
  e}$ to obtain an $\Lri$-basis for $\Lambda(\Lri)$ in the analysis of
Section~\ref{subsec:kirillov}, with $\bfb=(0,\dots,0)$. This ensures
that the polynomials occurring in \eqref{equ:integral} are defined
over $\Gri$. Theorem~\ref{thmABC:thm A} now follows formally by the
arguments given in~\cite[Section~4]{AKOVI/10}, as the integral
\eqref{equ:integral} can be expressed in terms of an integral of the
form \cite[Equation (4.1)]{AKOVI/10}.

\subsection{Nilpotency class $2$}\label{subsec:class 2}

In nilpotency class $2$, many of the constructions given in the
previous sections can be made more directly, allowing us to deduce
slightly stronger results. While some of these modifications will be
known to the experts, we record them here for completeness. Throughout
this section, let $\Lambda$ be a class-$2$-nilpotent $\Gri$-Lie
lattice, with $\rk_\Gri(\Lambda)=h$, say.  Note that we do not assume
that $\Lambda'\subseteq 2\Lambda$ here.
\subsubsection{}\label{subsubsec:group schemes class 2}
We start by constructing a group scheme $\bfG_\Lambda$ associated
to~$\Lambda$, which coincides with the group scheme defined in
Section~\ref{subsubsec:group schemes} if~$\Lambda'\subseteq
2\Lambda$. We fix an $\Gri$-basis $(x_1,\dots,x_h)$ for $\Lambda$. Let
$R$ be an $\Gri$-algebra and consider $\Lambda(R)=\Lambda\otimes_\Gri
R$. We also write $(x_{1},\dots,x_h)$ for the $R$-basis
$(x_{1}\otimes1,\dots,x_h\otimes1)$ for~$\Lambda(R)$.

Any element $g\in\Lambda(R)$ can be expressed uniquely as
$g=\sum_{i=1}^{h}a_{i}x_{i}$, for some $\bfa=(a_{1},\dots,a_{h})\in
R^{h}$. Adopting multiplicative notation, we identify $g$ with the
formal monomial $\bfx^{\bfa}=x_{1}^{a_{1}}\cdots x_h^{a_{h}}$, and
define a group multiplication $*$ on the set of all such monomials by
defining group commutators via the Lie bracket on~$\Lambda(R)$. More
precisely, we define, for $1\leq i<j \leq r$ and $a_i,a_j\in R$,
$$x_i^{a_i}*x_j^{a_j} = x_i^{a_i}x_j^{a_j}, \quad
x_j^{a_j}*x_i^{a_i}=x_i^{a_i}x_j^{a_j}\bfx^{a_ia_j\boldsymbol\lambda_{ij}},$$
where
$\boldsymbol\lambda_{ij}=(\lambda_{ij}^1,\dots,\lambda_{ij}^{h})\in
\Gri^{h}$ is defined via the identity
$[x_i,x_j]=\sum_{k=1}^{h}\lambda_{ij}^kx_k$ in $\Lambda(R)$.
Extending this to the set of all monomials in the obvious way, we
obtain polynomials
$M_{i}(X_{1},\dots,X_h,\tilde{X}_{1},\dots,\tilde{X}_h)$, for
$i=1,\dots,h$, over $\Gri$ such that
\[
\bfx^{\bfa}*\bfx^{\bfa'}=\bfx^{\bfa+\bfa'+(M_{i}(\bfa,\bfa'))_{i}}.\]
Similarly, there are polynomials $I_{i}(X_{1},\dots,X_h)$, for
$i=1,\dots,h$, over $\Gri$ such that \[
(\bfx^{\bfa})^{-1}=\bfx^{-\bfa+(I_{i}(\bfa))_{i}}.\]
This defines a unipotent group scheme $\bfG_{\Lambda}$ over~$\Gri$,
isomorphic as a scheme to affine $h$-space over~$\Gri$, representing
the group functor\[ R\longmapsto(\{\bfx^{\bfa} \mid
\bfa\in R^h\},*).\] Note that when $c=2$ and
$\Lambda'\subseteq2\Lambda$, the group scheme $\mathbf{G}_{\Lambda}$
is isomorphic to the one constructed in Section~\ref{subsubsec:group
schemes} by means of the Hausdorff series.

\begin{remark}\label{rem:class 2}
  The construction sketched here mirrors, of course, the classical
  construction of a unipotent group scheme $\bfG$ over $\Z$ associated
  to a $\T$-group $G$ of nilpotency class~$2$ by means of a
  Mal'cev-basis. This allows one to recover $G$ as $\bfG(\Z)$ and to
  define the group~$G^R:=\bfG(R)$, for any $\Z$-module $R$;
  c.f.~\cite[Sections 1 and 5]{GSegal/84}. The point of view taken in
  the present paper allows us to study group schemes defined over
  proper extensions of~$\Z$.
\end{remark}

\subsubsection{}
The main results in Section~\ref{subsec:kirillov} all assume that $p$
is odd or that $p=2$ and $c\geq4$. In the current section we formulate
a Kirillov orbit formalism for group schemes of nilpotency class $2$
which is valid for all primes $p$, without restriction. We indicate
how the analysis of Sections~\ref{subsubsec:poincare}
and~\ref{subsubsec:integration} simplifies in this case. Note that the
proof of Theorem~\ref{thmABC:thm A} may still require the exclusion of
finitely many places.

Let $(x_1,\dots,x_{h})$ be an $\Gri$-basis for $\Lambda$, and let
$\bfG=\bfG_\Lambda$ be the group scheme defined in
Section~\ref{subsubsec:group schemes class 2}. For any $\Gri$-algebra
$R$, we have $\bfG(R)=\{\bfx^\bfa\mid \bfa\in R^h\}$, and an obvious
bijection
\[
 \lambda_{R}:\bfG(R)\longrightarrow\Lambda(R),\qquad\bfx^{\bfa}\longmapsto\sum_{i=1}^{h}a_{i}x_{i}.\]

We now assume that $R=\lri$, a compact discrete valuation ring of
characteristic zero, and write $\lambda$ for $\lambda_{\lri}$.  We
further write $\mfg$ for the $\lri$-Lie lattice $\Lambda(\lri)$, and
$\widehat{\mfg}$ for its Pontryagin
dual~$\Hom_{\Z}^{\textup{cts}}(\mfg,\C^\times)$.  For any
$\psi\in\widehat{\mfg}$ we consider the form \[
B_{\psi}:\mfg\times\mfg\longrightarrow\C^\times,\quad(x,y)\longmapsto\psi([x,y])\]
with radical $\mathrm{Rad}(B_{\psi}):=\{x\in\mfg\mid\forall
y\in\mfg:B_{\psi}(x,y)=1\}$.  For $g\in\bfG(\lri)$ and $x\in\mfg$, we
define the co-adjoint action
$\bfG(\lri)\times\widehat{\mfg}\mapsto\widehat{\mfg}$ by
$(g,\psi)\mapsto\ad^{*}(g)\psi$, where $\ad^{*}(g)\psi$ is the map
given by\[ x\longmapsto\psi(x+[\lambda(g),x]).\] Let $\Stab(\psi)$
denote the stabiliser in $\bfG(\lri)$ of $\psi$ under the co-adjoint
action, and write $\Omega(\psi)$ for the co-adjoint orbit of~$\psi$.

\begin{lemma}
 For any $\psi\in\widehat{\mfg}$, the group $\Stab(\psi)$ contains
 $Z(\bfG(\lri))$, and
$$\Stab(\psi) = \lambda^{-1}(\Rad(B_\psi)).$$ In particular,
 \begin{equation}
 |\Omega(\psi)|=|\bfG(\lri):\Stab(\psi)|=|\mfg:\mathrm{Rad}(B_{\psi})|<\infty.\label{equ:stab
   index}\end{equation}
 \end{lemma}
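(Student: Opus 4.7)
The plan is to unravel the definition of the co-adjoint action to identify $\Stab(\psi)$, then use the class-$2$ structure of $\bfG_\Lambda$ to compare the group-theoretic and Lie-theoretic centres, and finally invoke continuity to get finiteness.

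First, I would tackle the equality $\Stab(\psi) = \lambda^{-1}(\Rad(B_\psi))$ by direct computation. An element $g \in \bfG(\lri)$ lies in $\Stab(\psi)$ precisely when $\ad^*(g)\psi = \psi$ as characters of $\mfg$, i.e.\ when $\psi(x + [\lambda(g),x]) = \psi(x)$ for all $x \in \mfg$. Since $\psi$ is a homomorphism into $\C^\times$, this is equivalent to $\psi([\lambda(g), x]) = 1$ for every $x \in \mfg$, which is the condition $B_\psi(\lambda(g),\cdot) \equiv 1$, i.e.\ $\lambda(g) \in \Rad(B_\psi)$. Since $\lambda$ is a bijection, this gives $\Stab(\psi) = \lambda^{-1}(\Rad(B_\psi))$.

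Next, for the containment $Z(\bfG(\lri)) \subseteq \Stab(\psi)$, I would argue via $Z(\mfg)$. Trivially $Z(\mfg) \subseteq \Rad(B_\psi)$, since $[Z(\mfg),\mfg] = 0$ forces $B_\psi(x,\cdot) \equiv 1$ for every $x \in Z(\mfg)$. It therefore suffices to show that $\lambda$ maps $Z(\bfG(\lri))$ into $Z(\mfg)$. Using the explicit multiplication law from Section~\ref{subsubsec:group schemes class 2}, for $g = \bfx^\bfa\bfy^\bfb$ and $h = \bfx^{\bfa'}\bfy^{\bfb'}$ one has $g*h = \bfx^{\bfa+\bfa'}\bfy^{\bfb+\bfb'+(M_i(\bfa,\bfa'))_i}$ and likewise for $h*g$, so the group commutator $[g,h]$ is central and corresponds under $\lambda$ to the Lie bracket $[\lambda(g),\lambda(h)] \in \mfg' \subseteq Z(\mfg)$ (because $\Lambda$ is of class $2$). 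Hence $g$ is central in $\bfG(\lri)$ iff $[\lambda(g),\lambda(h)] = 0$ for all $h$, which is exactly $\lambda(g) \in Z(\mfg)$. So $\lambda^{-1}(Z(\mfg)) = Z(\bfG(\lri))$, and the first claim follows.

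Finally, for the finiteness assertion \eqref{equ:stab index}, I would use that $\mfg$ is a compact abelian group and $\psi \in \widehat{\mfg}$ is continuous, so $\ker \psi$ is an open, hence finite-index, subgroup of $\mfg$. The radical $\Rad(B_\psi) = \{x \in \mfg \mid [x,\mfg] \subseteq \ker\psi \cap \mfg'\}$ therefore contains the preimage under the continuous map $\ad: \mfg \to \Hom_\lri(\mfg,\mfg')$ of an open neighbourhood of $0$, and so is itself open. An open subgroup of the compact group $\mfg$ has finite index, giving $|\mfg:\Rad(B_\psi)| < \infty$ and hence $|\Omega(\psi)| = |\bfG(\lri):\Stab(\psi)|$ is finite.

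The only step requiring any real care is the identification $Z(\bfG(\lri)) = \lambda^{-1}(Z(\mfg))$: this is where the class-$2$ hypothesis is essential, since the explicit formulas from Section~\ref{subsubsec:group schemes class 2} ensure that the group commutator is $\lri$-bilinear in the $\bfx$-coordinates and matches the Lie bracket exactly, a coincidence that would break in higher nilpotency class without invoking the Hausdorff series.
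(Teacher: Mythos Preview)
Your proof is correct and follows essentially the same route as the paper: the equality $\Stab(\psi)=\lambda^{-1}(\Rad(B_\psi))$ is obtained by the identical direct computation, and finiteness is deduced from continuity of $\psi$ together with the orbit--stabiliser theorem. Your treatment is simply more explicit than the paper's, which compresses the centre containment and the openness argument into the single remark ``This proves the first two assertions'' and an appeal to continuity.
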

\begin{proof}
We have
\begin{align*}
 \Stab(\psi) & =\{g\in\bfG(\lri)\mid\forall
 x\in\mfg:\psi(x+[\lambda(g),x])=\psi(x)\,\}\\ &
 =\{g\in\bfG(\lri)\mid\forall x\in\mfg:\psi([\lambda(g),x])=1\,\}\\ &
 = \lambda^{-1}(\Rad(B_\psi)).
\end{align*}
This proves the first two assertions.  The claims regarding the orbit
size follow from the orbit stabiliser theorem and the fact that $\psi$
is continuous.
\end{proof}

A key role in the Kirillov orbit method is played by polarising
subalgebras for characters $\psi\in\widehat{\mfg}$, that is subalgebras
$P$ of $\mfg$ with the property that $B_{\psi}|_{P\times P}=1$, and
which are maximal with respect to this property.

\begin{lemma}\label{lem:polar}
 Let $\psi\in\widehat{\mfg}$. Then there exists a polarising
subalgebra for $\psi$.
\end{lemma}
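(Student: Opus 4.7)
The plan is to reduce the problem to a finite one by passing to the quotient $V := \mfg/\Rad(B_\psi)$, and then to exploit the class-$2$ hypothesis to trivialise the subalgebra condition: since $[\mfg,\mfg] \subseteq Z(\mfg)$, every $\lri$-submodule of $\mfg$ containing $Z(\mfg)$ is automatically a Lie subalgebra.

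First I would observe that $Z(\mfg) \subseteq \Rad(B_\psi)$: for $z \in Z(\mfg)$ and $y \in \mfg$ one has $B_\psi(z,y) = \psi([z,y]) = \psi(0) = 1$. Combined with the observation above, this shows that every $\lri$-submodule $P \supseteq \Rad(B_\psi)$ satisfies $[P,P] \subseteq [\mfg,\mfg] \subseteq Z(\mfg) \subseteq P$ and is thus a subalgebra of $\mfg$. Continuity of $\psi$ together with compactness of $\mfg$ makes $\Rad(B_\psi)$ an open submodule, so $V$ is a finite $\lri$-module and $B_\psi$ descends to a non-degenerate, bi-additive, alternating pairing $\bar B_\psi$ on $V$.

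Second, the set $\mathcal{I}$ of $\lri$-submodules of $V$ that are isotropic for $\bar B_\psi$ is finite and non-empty (it contains $\{0\}$), so it admits a maximal element $\bar P$. Let $P$ denote its preimage in $\mfg$. By the first step, $P$ is an $\lri$-subalgebra of $\mfg$, and by construction $B_\psi|_{P \times P} = 1$.

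Finally, to verify that $P$ is polarising, consider any subalgebra $Q \supseteq P$ with $B_\psi|_{Q \times Q} = 1$. Using bi-additivity of $B_\psi$ and the vanishing of $B_\psi$ on $\mfg \times \Rad(B_\psi)$ (which follows directly from the definition of the radical and skew-symmetry), the sum $Q + \Rad(B_\psi)$ is again isotropic; it contains $\Rad(B_\psi)$, hence is a subalgebra, and its image in $V$ lies in $\mathcal{I}$ and contains $\bar P$. Maximality of $\bar P$ forces this image to equal $\bar P$, whence $Q + \Rad(B_\psi) = P$ and $Q = P$. There is essentially no obstacle to overcome; the entire statement collapses to finding a maximal element in a finite poset, once one notices that in the class-$2$ setting the subalgebra condition is free and that $\Rad(B_\psi)$ has finite index by continuity.
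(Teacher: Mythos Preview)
Your proof is correct. Both your argument and the paper's follow the same broad strategy --- reduce to a finite quotient, find a polariser there, and pull back --- but the execution differs. The paper factors $\psi$ through a finite Lie ring $\Lambda(\lri/\mfp^n)$ and then \emph{cites} Howe's lemma to obtain a polarising subalgebra in that finite setting; it does not use the class-$2$ hypothesis explicitly at this step. You instead quotient by $\Rad(B_\psi)$, observe that in class~$2$ one has $\mfg' \subseteq Z(\mfg) \subseteq \Rad(B_\psi)$ so that the quotient carries the \emph{trivial} Lie bracket, and then the ``polarising subalgebra'' condition collapses to ``maximal isotropic $\lri$-submodule'' in a finite module --- whose existence is immediate. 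Your route is more self-contained and makes transparent why class~$2$ makes the lemma trivial; the paper's route is terser and would generalise to higher nilpotency class given Howe's result.
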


\begin{proof}
  There exists $n\in\N$ and a homomorphism
  $\overline{\psi}:\Lambda(\lri/\mfp^{n})\rightarrow\mathbb{C}^{\times}$
  such that $\psi$ factors through $\overline{\psi}$ via the natural
  surjection $p_{n}:\mfg\rightarrow\Lambda(\lri/\mfp^{n})$, that is
  $\psi=\overline{\psi}\circ p_{n}$. Thus the form $B_{\psi}$ factors
  through the form \[
  B_{\overline{\psi}}:\Lambda(\lri/\mfp^{n})\times\Lambda(\lri/\mfp^{n})\longrightarrow\mathbb{C}^{\times},\quad(x,y)\longmapsto\overline{\psi}([x,y]).\]
  Let $P_{\overline{\psi}}$ be a polarising subalgebra for
  $\overline{\psi}$ in $\Lambda(\lri/\mfp^{n})$; see, for instance,
  \cite[Lemma~4]{Howe-nilpotent/77}.  Clearly
  $P_{\psi}:=p_{n}^{-1}(P_{\overline{\psi}})$ has the desired
  properties.
\end{proof}

\begin{lemma}\label{lem:linear}
  Let $P\subseteq\mfg$ be an $\lri$-subalgebra.  Let
  $\psi\in\widehat{\mfg}$ be such that~$\psi(P')=1$. Then
  $H:=\lambda^{-1}(P)$ is a subgroup of $\bfG(\lri)$ and the
  restriction $\psi\circ\lambda\vert_{H}$ is a $1$-dimensional
  representation of~$H$.
\end{lemma}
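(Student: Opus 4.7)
The plan is to verify the two assertions by a direct coordinate computation, using the explicit multiplication formula for $\bfG_\Lambda$ from Section~\ref{subsubsec:group schemes class 2}. Recall from that construction that for $g = \bfx^\bfa \bfy^\bfb$ and $h = \bfx^{\bfa'} \bfy^{\bfb'}$ in $\bfG(\lri)$ one has $\lambda(g*h) = \lambda(g) + \lambda(h) + c(g,h)$ with
\[
c(g,h) = \sum_{1 \leq i < j \leq r} a_j a_i'\, [x_j, x_i] \in \mfz,
\]
and analogously $\lambda(g^{-1}) = -\lambda(g) + (\text{central term})$. The hard part will be showing that this correction lies in $P'$, not merely in $\mfz$; this cannot be done directly without first choosing an $\lri$-basis of $\mfg$ adapted to $P$.

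My first step is therefore to apply the elementary divisor theorem. Since $\mfz$ is isolated in $\mfg$ by Lemma~\ref{lem:Free centre}, the quotient $\mfg/\mfz$ is a free $\lri$-module of rank $r$, and its submodule $P/\mfz$ admits a basis of the form $(\pi^{e_1}\overline{x}_1, \dots, \pi^{e_s}\overline{x}_s)$ for some $\lri$-basis $(\overline{x}_1, \dots, \overline{x}_r)$ of $\mfg/\mfz$ and non-negative integers $e_1 \leq \cdots \leq e_s$, where $s = \rk_\lri(P/\mfz)$. Lifting the $\overline{x}_i$ to $\mfg$ and appending an $\lri$-basis of $\mfz$ yields an $\lri$-basis of $\mfg$ for which
\[
P = \pi^{e_1}\lri x_1 + \cdots + \pi^{e_s}\lri x_s + \mfz,
\]
and I would carry out the construction of $\bfG_\Lambda$ and $\lambda$ with respect to this adapted basis.

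With these preliminaries in place, the subgroup property of $H$ follows immediately from the displayed identity for $\lambda(g*h)$ together with its inversion counterpart: since $\mfz \subseteq P$ and $P$ is an $\lri$-submodule of $\mfg$, both $\lambda(g*h)$ and $\lambda(g^{-1})$ lie in $P$ whenever $\lambda(g)$ and $\lambda(h)$ do.

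For the representation property, the additivity of $\psi$ reduces the multiplicativity of $\psi\circ\lambda|_H$ to the identity $\psi(c(g,h)) = 1$ for all $g,h \in H$, which by the hypothesis $\psi(P') = 1$ will follow once $c(g,h) \in P'$ is established. Here the adapted basis pays off: for $g, h \in H$ the $x$-coordinates satisfy $a_i = a_i' = 0$ for $i > s$, while for $i \leq s$ they lie in $\pi^{e_i}\lri$. Writing $a_j = \pi^{e_j}\tilde{a}_j$ and $a_i' = \pi^{e_i}\tilde{a}_i'$, the correction takes the form
\[
c(g,h) = \sum_{1 \leq i < j \leq s} \tilde{a}_j \tilde{a}_i' \,[\pi^{e_j} x_j,\, \pi^{e_i} x_i],
\]
in which each Lie bracket is the bracket of two elements of $P$, hence lies in $P' = [P,P]$. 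The $\lri$-linearity of $P'$ then yields $c(g,h) \in P'$, completing the argument.
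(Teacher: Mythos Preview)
Your proposal is correct and follows essentially the same line as the paper's own proof: both invoke the elementary divisor theorem over the PID $\lri$ to replace the partial basis $(x_1,\dots,x_r)$ by one adapted to $P/\mfz$, and then verify that the central correction term arising in the product of two elements of $H$ lies in $P'$. The only cosmetic difference is that the paper phrases the correction as an element $u\in H'$ (a product of group commutators of elements of $H$) and appeals to the correspondence between group commutators and Lie brackets, whereas you write out the correction explicitly as $\sum_{i<j} a_j a_i'[x_j,x_i]$ and factor each summand as a bracket of two elements of $P$; your separation of the subgroup claim (which only needs $c(g,h)\in\mfz\subseteq P$) from the homomorphism claim (which needs $c(g,h)\in P'$) is in fact slightly tidier than the paper's presentation.
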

\begin{proof}
 Let $h_1=\bfx^{\bfa_1}$ and $h_2={\bfx}^{\bfa_2}\in H$.  It is easy to
 check that
 \begin{equation*}
  h_1h_2=\bfx^{\bfa_1}\,\bfx^{\bfa_2}=\bfx^{\bfa_1+\bfa_2+\mathbf{u}},
 \end{equation*} for some $\mathbf{u}\in \lri^{h}$ such that $\bfx^\mathbf{u}\in H'$. Clearly $\mathbf{u}\in P'$, and so
 \begin{equation}
   \lambda(h_1h_2)=\bfa_1 + \bfa_2 + \mathbf{u} \in
   P,\label{equ:product}
 \end{equation}
 and hence $h_1h_2\in H$.  Similarly, given $h=\bfx^{\bfa}\in H$, we
 have \[ h^{-1}=(\bfx^{\bfa})^{-1}=\bfx^{-\bfa+\mathbf{v}},\] for some
 $\mathbf{v}\in \lri^{h}$ such that $\bfy^\mathbf{v}\in H'$. Then
 $\mathbf{v}\in P'$, and so\[ \lambda(h^{-1})=-\bfa+\mathbf{v}\in P.\]
 Thus $h^{-1}\in H$, and so $H$ is a subgroup of
 $\bfG(\lri)$. Using~\eqref{equ:product} and the fact that
 $\psi(P')=1$, we obtain
 \begin{equation*}
  \psi(\lambda(h_1h_2))=\psi(\bfa_1+\bfa_2+{\bf u})=\psi(\bfa_1)\psi(\bfa_2)=\psi(\lambda(h_1))\psi(\lambda(h_2)),
 \end{equation*}
 so $\psi\circ\lambda\vert_{H}$ is a homomorphism of $H$, as asserted.
\end{proof}

For every $\psi\in\widehat{\mfg}$, we choose a polarising subalgebra
$P_{\psi}$ for $\psi$ in $\mfg$;
cf. Lemma~\ref{lem:polar}. Lemma~\ref{lem:linear} asserts that
$H_{\psi}:=\lambda^{-1}(P_{\psi})$ is a subgroup of $\bfG(\lri)$, and
that $\psi\circ\lambda\vert_{H_{\psi}}$ is a $1$-dimensional
representation of $H_\psi$. We define the representation
\[
\pi(\psi)=\Ind_{H_{\psi}}^{\bfG(\lri)}(\psi\circ\lambda\vert_{H_{\psi}}).
\]
Recall that $\Irr(\bfG(\lri))$ denotes the set of isomorphism classes
of continuous, irreducible complex representations of $\bfG(\lri)$.
The following result establishes a Kirillov orbit method for the group $\bfG(\lri)$.

\begin{proposition}\label{pro:Kirillov-class2}
 For every $\psi\in\widehat{\mfg}$ the representation $\pi(\psi)$ is
 irreducible, of dimension $\vert\Omega(\psi)\vert^{1/2}$, and all
 representations in $\Irr(\bfG(\lri))$ arise in this way.  Given
 $\phi,\psi\in\widehat{\mfg}$, the representations $\pi(\phi)$ and
 $\pi(\psi)$ are isomorphic if and only if
 $\Omega(\phi)=\Omega(\psi)$.  They are twist-equivalent if and only
 if $\phi|_{\mfg'}=\psi|_{\mfg'}$.
\end{proposition}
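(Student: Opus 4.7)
The strategy is to reduce to a finite quotient and then mimic the classical Kirillov orbit method for finite class-$2$-nilpotent groups, as pioneered by Howe. First, since $\psi\in\widehat{\mfg}$ is continuous and $\mfg$ is a pro-$p$ group, $\psi$ factors through $\Lambda(\lri/\mfp^n)$ for some~$n$, and accordingly $\pi(\psi)$ factors through $\bfG(\lri/\mfp^n)$. Hence we may reduce to the setting of a finite $p$-group $G$ of nilpotency class at most~$2$, with associated finite $\Z$-Lie algebra~$\overline{\mfg}=\Lambda(\lri/\mfp^n)$.

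For irreducibility of $\pi(\psi)$, the crucial fact is that a polarising subalgebra $P_\psi$ projects to a maximal isotropic subspace of $\overline{\mfg}/\Rad(B_\psi)$ under the non-degenerate alternating form induced by $B_\psi$; in particular $[\overline{\mfg}:P_\psi]^2 = [\overline{\mfg}:\Rad(B_\psi)]$. By Lemma~\ref{lem:linear}, $H_\psi:=\lambda^{-1}(P_\psi)$ is a subgroup of $G$ with $\psi\circ\lambda|_{H_\psi}$ a linear character. I would then verify Mackey's irreducibility criterion: for every $g\in G\setminus H_\psi$, the characters $\psi\circ\lambda|_{H_\psi}$ and its $g$-conjugate disagree on $H_\psi\cap gH_\psi g^{-1}$; this follows from the fact that $g\notin H_\psi$ corresponds (via $\lambda$) to an element of $\overline{\mfg}\setminus P_\psi$, which by the maximality of $P_\psi$ pairs non-trivially under $B_\psi$ with some element of $P_\psi$. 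The dimension formula $\dim\pi(\psi) = [G:H_\psi] = [\overline{\mfg}:\Rad(B_\psi)]^{1/2} = |\Omega(\psi)|^{1/2}$ then follows from~\eqref{equ:stab index}.

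To show that every irreducible representation arises this way and that $\pi(\phi)\cong\pi(\psi)$ if and only if $\Omega(\phi)=\Omega(\psi)$, I would argue as follows. One implication is direct: if $\phi=\ad^*(g)\psi$, then conjugation by $g$ carries the datum $(P_\psi,\psi\circ\lambda|_{H_\psi})$ to the analogous datum for~$\phi$, yielding $\pi(\phi)\cong\pi(\psi)$. For the converse and for exhaustiveness, I would invoke a dimension count: the orbits partition $\widehat{\overline{\mfg'}}$, and summing $|\Omega(\psi)|$ over a transversal gives $|\overline{\mfg}/\Rad|$-contributions whose total matches $|G|=\sum_{\pi\in\Irr(G)}(\dim\pi)^2$. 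Since the pairwise non-isomorphic $\pi(\Omega)$ account for $\sum_\Omega |\Omega|=|G|$ on the squared-dimension side, they must exhaust $\Irr(G)$, and distinct orbits must give non-isomorphic representations. Pairwise non-isomorphism between orbits can also be extracted from Mackey's intertwining formula applied to $H_\phi$ and $H_\psi$.

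Finally, for twist-equivalence, I would use that the abelianisation of $\bfG(\lri)$ is canonically $\lambda^{-1}(\mfg)/\lambda^{-1}(\mfg')$, so a continuous $1$-dimensional representation $\chi$ of $\bfG(\lri)$ corresponds to a character $\widetilde\chi\in\widehat{\mfg}$ vanishing on~$\mfg'$. The projection (tensor-induction) formula yields $\chi\otimes\pi(\psi) = \pi(\widetilde\chi+\psi)$, and since $(\widetilde\chi+\psi)|_{\mfg'}=\psi|_{\mfg'}$, twist-equivalent orbit data have identical restrictions to~$\mfg'$. Conversely, if $\phi|_{\mfg'}=\psi|_{\mfg'}$, then $\phi-\psi$ extends to an element of $\widehat{\mfg/\mfg'}$, yielding the twisting character. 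The main obstacle is the careful verification of the Mackey irreducibility criterion in the present setting, but the class-$2$ hypothesis ensures $\mfg'\subseteq\mfz$ and hence keeps the relevant commutator computations linear, bypassing the Baker-Campbell-Hausdorff issues that force the restriction $p>c$ in higher nilpotency class.
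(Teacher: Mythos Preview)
Your approach via Mackey's irreducibility criterion is sound for irreducibility, but the dimension-count argument for the bijection between coadjoint orbits and irreducibles does not close as stated. From $\sum_\Omega |\Omega| = |G|$ and $|\Omega| = (\dim\pi(\Omega))^2$ you obtain $\sum_\Omega (\dim\pi(\Omega))^2 = \sum_{\rho\in\Irr(G)}(\dim\rho)^2$, but this equality of sums is compatible with the map $\Omega\mapsto[\pi(\Omega)]$ being neither injective nor surjective: two orbits of equal size could map to the same irreducible while another irreducible of that dimension is missed. You therefore genuinely need the Mackey intertwining computation you allude to in order to establish that distinct orbits give non-isomorphic representations; only once that is in hand does counting yield exhaustion. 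A related point you gloss over is the independence of $\pi(\psi)$ from the choice of polarising subalgebra~$P_\psi$. Your ``conjugation carries the datum'' step, in class~$2$, produces $\pi(\psi)\cong\Ind_{H_\psi}^{\bfG(\lri)}(\phi\circ\lambda|_{H_\psi})$, since $H_\psi\supseteq \bfG(\lri)'$ is normal; but this is $\pi(\phi)$ built from the polarising subalgebra $P_\psi$ rather than the chosen~$P_\phi$. Both gaps are fillable by the same Mackey intertwining calculation, but they should be made explicit.

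The paper takes a shorter route: it invokes the Kirillov character formula $\chi_{\pi(\psi)}(g) = |\Omega(\psi)|^{-1/2}\sum_{\omega\in\Omega(\psi)}\omega(\lambda(g))$ (citing Kazhdan), which in one stroke shows that $\pi(\psi)$ depends only on~$\Omega(\psi)$ (hence not on~$P_\psi$), that it is irreducible, and that distinct orbits give orthogonal characters. Exhaustion then follows by exactly the counting argument you propose. Your treatment of twist-equivalence via the projection formula $\chi\otimes\pi(\psi)=\Ind_{H_\psi}^{\bfG(\lri)}(\chi|_{H_\psi}\otimes(\psi\circ\lambda|_{H_\psi}))$ is correct and matches the paper's.
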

\begin{proof}
 The character $\chi_{\pi(\psi)}$ of the representation $\pi(\psi)$ is
 given by\[
 \chi_{\pi(\psi)}(g)=|\Omega(\psi)|^{-1/2}\sum_{\omega\in\Omega(\psi)}\omega(\lambda(g)),\quad\text{for
 }g\in\bfG(\lri);\] see, for instance, \cite[Proposition
 1]{Kazhdan/77}, with $\log$ replaced by $\lambda$. This character
 formula immediately shows that the isomorphism class of $\pi(\psi)$
 only depends on $\Omega(\psi)$, that $\pi(\psi)$ is irreducible, and
 that $\langle\pi(\phi),\pi(\psi)\rangle=0$ if
 $\Omega(\phi)\neq\Omega(\psi)$. With \eqref{equ:stab index} it also
 implies that
 $\dim\pi(\psi)=|\Omega(\psi)|^{1/2}=|\mfg:\mathrm{Rad}(B_{\psi})|^{1/2}$.
 For $n\in\N_{0}$, the set of continuous additive characters of $\mfg$
 which factor through $\Lambda(\lri/\mfp^{n})$ is a union of
 co-adjoint orbits. The representations of $\bfG(\lri)$ associated to
 these orbits all factor through the finite group
 $\bfG(\lri/\mfp^{n})$, of order
 $q^{nh}=|\Lambda(\lri/\mfp^{n})|$. But $q^{nh}$ is also the sum of
 the squares of the dimensions of the irreducible representations of
 $\bfG(\lri/\mfp^{n})$, so every irreducible representation of
 $\bfG(\lri/\mfp^{n})$ must be of the form $\pi(\psi)$, for some
 $\psi$.

Finally, let $\chi$ be a continuous $1$-dimensional representation of
$\bfG(\lri)$. We have \begin{equation}
  \chi\otimes\pi(\psi)=\Ind_{H_{\psi}}^{\bfG(\lri)}\left(\chi|_{H_{\psi}}\otimes(\psi\circ\lambda|_{H_{\psi}})\right).\end{equation}
Since $\bfG(\lri)'\leq H_{\psi}$, this implies that two
representations $\pi(\phi)$ and $\pi(\psi)$ are twist-equivalent if
and only if $\phi|_{\mfg'}=\psi|_{\mfg'}$.
\end{proof}
We record the following immediate consequence of
Proposition~\ref{pro:Kirillov-class2}.
\begin{corollary}\label{cor:zeta kirillov}
 We have
$$\zeta_{\bfG(\lri)}(s)=\sum_{\psi\in\widehat{\mfg'}}\vert\mfg:\Rad(B_{\psi})\vert^{-s/2}.$$
\end{corollary}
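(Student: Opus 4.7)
The plan is to read the corollary as a bookkeeping reformulation of Proposition~\ref{pro:Kirillov-class2}, combined with the elementary observation, already used in Section~\ref{subsubsec:kirillov}, that the form $B_\psi$ depends only on the restriction~$\psi|_{\mfg'}$.

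First I would parametrise the twist-isoclasses of $\Irr(\bfG(\lri))$. By Proposition~\ref{pro:Kirillov-class2}, every element of $\Irr(\bfG(\lri))$ is isomorphic to some $\pi(\tilde\psi)$ with $\tilde\psi \in \widehat{\mfg}$, and $\pi(\tilde\phi)$ is twist-equivalent to $\pi(\tilde\psi)$ precisely when $\tilde\phi|_{\mfg'} = \tilde\psi|_{\mfg'}$. Hence sending $\psi \in \widehat{\mfg'}$ to the twist-isoclass of $\pi(\tilde\psi)$, for any choice of lift $\tilde\psi \in \widehat{\mfg}$ of $\psi$, yields a well-defined bijection between $\widehat{\mfg'}$ and the set of twist-isoclasses of $\Irr(\bfG(\lri))$.

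Next I would observe that, since $[x,y] \in \mfg'$ for all $x,y \in \mfg$, the bi-additive form $B_{\tilde\psi}$, and hence its radical $\Rad(B_{\tilde\psi})$, depends only on $\psi = \tilde\psi|_{\mfg'}$. Combined with the dimension formula $\dim \pi(\tilde\psi) = |\mfg : \Rad(B_{\tilde\psi})|^{1/2}$ from Proposition~\ref{pro:Kirillov-class2}, this shows that the common dimension of the representations in the twist-isoclass indexed by $\psi \in \widehat{\mfg'}$ equals $|\mfg : \Rad(B_\psi)|^{1/2}$, where we abbreviate $B_\psi := B_{\tilde\psi}$ for any lift.

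Finally I would substitute into the defining Dirichlet series
\[
 \zeta_{\bfG(\lri)}(s) = \sum_{n=1}^\infty \widetilde{r}_n(\bfG(\lri)) n^{-s},
\]
rewritten as a sum of $(\dim\pi)^{-s}$ over twist-isoclasses $\pi$ of irreducible representations, to obtain the claimed identity. There is essentially no obstacle: the substantive content sits entirely in Proposition~\ref{pro:Kirillov-class2}, and convergence on a right half-plane is guaranteed by Lemma~\ref{lem:poly}.
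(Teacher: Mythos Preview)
Your proposal is correct and matches the paper's approach: the paper records this corollary as an ``immediate consequence'' of Proposition~\ref{pro:Kirillov-class2} without supplying further details, and what you have written is precisely the unpacking of that implication.
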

We note that, in contrast to Theorem~\ref{thm:zeta radical}, the
formula given in Corollary~\ref{cor:zeta kirillov} is valid for all
primes~$p$, and is somewhat simpler than~\eqref{equ:zeta}. The
formalism developed in Sections~\ref{subsubsec:kirillov} and
\ref{subsubsec:integration} applies, without the assumption that
$p>2$, and simplifies as follows. We have $k=0$, as $\mfg'\leq \mfz$,
so the matrix $\calS$ makes no appearance. In analogy with the number
$\calN^\lri_{N,\bfa,\bfc}$ defined in~\eqref{def:N}, we therefore set,
for $N\in\N_0$, $\bfa\in\N_0^{\lfloor r/2 \rfloor}$,
\[
 \calN^\lri_{N,\bfa} := \# \left\{\bfy \in W_N(\lri) \mid
 \nu(\calR(\bfy))=\bfa \right\}.
\]
The class-$2$-analogue of Proposition~\ref{pro:zeta=poincare} is the
following.
\begin{proposition}\label{pro:zeta=poincare class 2}
 We have
 \begin{equation}\label{equ:zeta=poincare class 2}
 \zeta_{\bfG(\lri)}(s) = \sum_{N\in\N_0, \bfa\in\N_0^{\lfloor r/2
     \rfloor}}\calN^\lri_{N,\bfa}q^{-\sum_{i=1}^{\lfloor r/2\rfloor}
   (N-a_i)s}=:\calP_{\calR,\lri}(s).
\end{equation}
\end{proposition}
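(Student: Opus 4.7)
The plan is to mimic the argument for Proposition~\ref{pro:zeta=poincare} but starting from Corollary~\ref{cor:zeta kirillov}, whose validity for all primes obviates the hypothesis $p>c$. Since $\mfg$ has nilpotency class $2$, we have $\mfg'\subseteq\mfz$, hence $\iota(\mfg'\cap\mfz)=\iota(\mfg')$, so that $k=\rk_\lri(\iota(\mfg')/\iota(\mfg'\cap\mfz))=0$. Consequently the matrix $\calS$ and the vectors $\bfc$ disappear from the formalism of Section~\ref{subsubsec:poincare}, and only the data of the commutator matrix $\calR$ and the elementary divisor type $\nu(\calR(\bfy))$ remain. The second formula of the lemma preceding~\eqref{def:N} is vacuous (both sides equal $1$), while the first formula
\[
|\mfg : \Rad(B_{\psi})| = q^{2\sum_{i=1}^{\lfloor r/2 \rfloor}(N-\min\{a_i,N\})}
\]
remains valid; indeed its proof only uses the structure of the alternating form $B_\psi$ and the elementary divisor theorem, with no bearing on $p$.

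Starting from Corollary~\ref{cor:zeta kirillov}, I would decompose the Pontryagin dual as the disjoint union $\widehat{\mfg'}=\bigsqcup_{N\in\N_0}\Irr_N(\mfg')$, where $\Irr_N(\mfg')\cong\Hom_\lri(\mfg',\lri/\mfp^N)^*$ consists of those characters of exact level $N$. The dual basis $\bff^\vee$ identifies $\Hom_\lri(\mfg',\lri/\mfp^N)^*$ with $W_N(\lri)=((\lri/\mfp^N)^d)^*$, and for any $\psi\in\Irr_N(\mfg')$ represented by $w\in\Hom_\lri(\mfg',\lri)^*$ with coordinate vector $\bfy\in W_N(\lri)$, the formula above gives
\[
|\mfg : \Rad(B_\psi)|^{-s/2}=q^{-\sum_{i=1}^{\lfloor r/2\rfloor}(N-a_i)s},
\]
where $\bfa=\nu(\calR(\bfy))\in[N]_0^{\lfloor r/2\rfloor}$ is the truncated type. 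Since this summand depends on $\bfy$ only through the type $\bfa$, grouping the elements of $W_N(\lri)$ by $\bfa$ using the counting function $\calN^\lri_{N,\bfa}$ defined (in the class-$2$ version) just before the statement yields
\[
\zeta_{\bfG(\lri)}(s) = \sum_{N\in\N_0}\sum_{\bfa\in[N]_0^{\lfloor r/2\rfloor}}\calN^\lri_{N,\bfa}\,q^{-\sum_{i=1}^{\lfloor r/2\rfloor}(N-a_i)s},
\]
which is the desired identity.

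The only step requiring care is bookkeeping the disjoint union $\widehat{\mfg'}=\bigsqcup_N\Irr_N(\mfg')$ and checking that the star in $W_N(\lri)$ corresponds correctly to characters of exact level $N$; but this is the same argument used in the proof of \cite[Lemma~2.4]{AKOVI/10} and transfers verbatim. I do not anticipate any genuine obstacle, as the formalism of Sections~\ref{subsubsec:poincare} and~\ref{subsubsec:integration} is essentially purely linear-algebraic once the Kirillov orbit formula of Corollary~\ref{cor:zeta kirillov} is in hand, and its dependence on $p$ entered only through the invocation of Theorem~\ref{thm:zeta radical}, which in the class-$2$ setting is replaced by the unconditional Corollary~\ref{cor:zeta kirillov}.
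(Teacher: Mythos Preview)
Your argument is correct and is precisely the approach the paper intends: the paper does not spell out a proof of this proposition but simply presents it as ``the class-$2$-analogue of Proposition~\ref{pro:zeta=poincare}'', relying on Corollary~\ref{cor:zeta kirillov} in place of Theorem~\ref{thm:zeta radical} and on the observation that $k=0$ eliminates $\calS$ and $\bfc$ from the formalism of Section~\ref{subsubsec:poincare}. Your derivation also correctly supplies the factor $s$ in the exponent, which is evidently a typo in the displayed statement~\eqref{equ:zeta=poincare class 2} (compare~\eqref{def:poincare} with $k=0$, and the reformulation~\eqref{rewrite zeta=poincare}).
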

The Poincar\'e series $\calP_{\calR,\lri}(s)$ may be expressed in
terms of the $\mfp$-adic integral~\eqref{equ:integral}, simplified by
the fact that $v=0$. The following is analogous to
Corollary~\ref{cor:zeta=integral}.

\begin{corollary}\label{cor:power series in q class 2}
 The zeta function $\zeta_{\bfG(\lri)}(s)$ is a rational function in
 $q^{-s}$.
\end{corollary}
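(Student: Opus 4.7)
The plan is to mimic the argument sketched for Corollary~\ref{cor:zeta=integral}, exploiting the simplifications that occur in nilpotency class~$2$. First I would invoke Proposition~\ref{pro:zeta=poincare class 2} to rewrite $\zeta_{\bfG(\lri)}(s)$ as the Poincar\'e series $\calP_{\calR,\lri}(s)$, which only records the elementary divisor types $\bfa = \nu(\calR(\bfy))$ of the commutator matrix $\calR$ evaluated on representatives $\bfy$ of characters of~$\mfg'$.

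Next I would express $\calP_{\calR,\lri}(s)$ as a $\mfp$-adic integral in the spirit of~\eqref{equ:integral}. Because $\mfg'\subseteq\mfz$ in the class-$2$ setting we have $k=0$, so the matrix $\calS$ plays no role and the parameter $v$ in~\eqref{equ:integral} vanishes. The standard translation between types of antisymmetric matrices and principal minors (as used for~\cite[Proposition~3.1]{AKOVI/10} and recalled in Section~\ref{subsubsec:integration}) then yields an identity of the form
\begin{equation*}
 \calP_{\calR,\lri}(s) = 1 + (1-q^{-1})^{-1}\calZ_\lri(-s/2,\,-1,\,us-d-1),
\end{equation*}
where the integral $\calZ_\lri$ involves only the monomial $|x|_\mfp^{us-d-1}$ and the norms of the principal $2j\times 2j$ minors of $\calR(\bfY)$, for $j=0,1,\dots,u$.

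Finally I would invoke the classical rationality theorem for $\mfp$-adic integrals whose integrands are built from norms of polynomials (see, for instance,~\cite{Denef/91}): such integrals are rational functions in $q^{-s}$. Combining this with the displayed formula for $\calP_{\calR,\lri}(s)$ gives the desired rationality of $\zeta_{\bfG(\lri)}(s)$ in~$q^{-s}$. The only real point to verify carefully is the translation between the Poincar\'e-series data $\calN^\lri_{N,\bfa}$ and the minor-based integrand in~\eqref{equ:integral}; this is the main (though essentially bookkeeping) obstacle, but it is entirely parallel to the analysis in~\cite[Section~2.2]{Voll/10} and~\cite[Section~3]{AKOVI/10}, with the $\calS$-contribution simply omitted.
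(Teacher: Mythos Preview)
Your proposal is correct and follows essentially the same route as the paper: the paper observes that in class~$2$ one has $k=0$, hence $v=0$, so $\calP_{\calR,\lri}(s)$ is expressible via the integral~\eqref{equ:integral} with the $\calS$-contribution absent, and then appeals (as in Corollary~\ref{cor:zeta=integral}) to the standard rationality theorem for such $\mfp$-adic integrals. Your write-up simply spells out these steps in slightly more detail than the paper does.
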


In the next section we compute the Poincar\'e series
$\calP_{\calR,\lri}(s)$ directly for groups of type $F$, $G$ and~$H$,
bypassing the need to evaluate $\mfp$-adic integrals
like~\eqref{equ:integral}.

\section{Proof of Theorem~\ref{thmABC:thm C}}\label{sec:proofThmC}

Recall that $n\in\N$ and $\delta\in\{0,1\}$. Let
$\Lambda\in\{\calF_{n,\delta},\calG_{n},\calH_{n}\}$ be one of the Lie
rings defined in Definition~\ref{def:lattices}, and
$\bfG=\bfG_\Lambda\in\{F_{n,\delta}, G_{n}, H_{n}\}$ the associated
group scheme. As before, given a number field $K$ with ring of
integers~$\Gri$, and a non-zero prime ideal $\mfp$ of $\Gri$, we write
$\lri=\Gri_\mfp$ for the completion of $\Gri$ at $\mfp$, and $q$ for
the residue field cardinality $\vert \lri/\mfp\vert$. We write $\mfg$
for $\Lambda(\lri)$ and set $r=\rk_\lri(\mfg/\mfg')$,
$d=\rk_\lri(\mfg')$, in accordance with the notation introduced in
Section~\ref{subsubsec:poincare}. Note that, in all three cases, we
have~$n=\lfloor r/2 \rfloor$. We are looking to compute the zeta
function $\zeta_{\bfG(\lri)}(s)$. By
Proposition~\ref{pro:zeta=poincare class 2} it suffices to study the
Poincar\'e series $\calP_{\calR,\lri}(s)$ associated to the commutator
matrix $\calR(\bfY)=\calR_{\Lambda}(\bfY)$ of the relevant Lie lattice
with respect to the $\Z$-bases given in the presentations in
Definition~\ref{def:lattices}. These are as follows:
\begin{itemize}
 \item $\calR_{\calF_{n,\delta}}(\bfY)$ is the generic antisymmetric
   $(2n+\delta)\times (2n+\delta)$-matrix in the variables $Y_{ij}$,
   $1\leq i<j\leq 2n+\delta$. We have $r=2n+\delta$ and
   $d=\binom{2n+\delta}{2}$.
 \item $\calR_{\calG_{n}}(\bfY)=\left(\begin{matrix} &
   \textrm{M}(Y_{ij})\\ -\textrm{M}(Y_{ij})^{\mathrm{t}}\end{matrix}\right)$,
   where $\textrm{M}(Y_{ij})$ is the generic $n\times n$-matrix in the
   variables $Y_{ij}$, $1\leq i,j\leq n$. We have $r=2n$ and $d=n^2$.
 \item $\calR_{\calH_{n}}(\bfY)=\left(\begin{matrix} &
   \textrm{S}(Y_{ij})\\ -\textrm{S}(Y_{ij})\end{matrix}\right)$, where
   $\textrm{S}(Y_{ij})$ is the generic symmetric $n\times n$-matrix in
   the variables $Y_{ij}$, $1\leq i\leq j\leq n$. We have $r=2n$ and
   $d=\binom{n+1}{2}$.
\end{itemize}

It is advantageous to re-organise the respective series
$\calP_{\calR,\lri}(s)$ in the following way. Let
$I=\{i_{1},\dots,i_{l}\}_{<}\subseteq[n-1]_{0}$, and recall that, for
$j\in[n]_{0}$, we write $\mu_{j} = i_{j+1}-i_{j}$, with $i_0=0$
and~$i_{l+1}=n$. For $\bfr_{I}=(r_{i_{1}},\dots,r_{i_{l}})\in\N^{I}$,
we set $N=\sum_{i\in I}r_{i}$,
$W_N(\lri)=\left((\lri/\mfp^N)^d\right)^*$ and define
\begin{multline*}
\rmN^\lri_{I,\bfr_{I}}(\bfG):=\{\underline{w}\in
W_{N}(\lri)\mid\nu(\calR_\Lambda(\underline{w}))=\\(\underbrace{0,\dots,0}_{\mu_{l}},\underbrace{r_{i_{l}},\dots,r_{i_{l}}}_{\mu_{l-1}},\underbrace{r_{i_{l}}+r_{i_{l-1}},\dots,r_{i_{l}}+r_{i_{l-1}}}_{\mu_{l-2}},\dots,\underbrace{N,\dots,N}_{\mu_{0}})\in\N_0^{\lfloor
r/2 \rfloor}\}.
\end{multline*}
Note that $W_N(\lri)$ is partitioned by such sets. In particular, for
every $\underline{w}\in W_N(\lri)$ the type
$\nu(\calR_{\Lambda}(\underline{w}))$ always contains at least one
zero, as $\calR_{\Lambda}(\underline{w})\not\equiv 0 \bmod \mfp$.

We can now rewrite~\eqref{equ:zeta=poincare class 2} as
\begin{equation}\label{rewrite zeta=poincare}
 \zeta_{\bfG(\lri)}(s)=\calP_{\calR,\lri}(s) =
 \sum_{I\subseteq[n-1]_{0}}\sum_{\bfr_{I}\in\N^{I}}|\rmN^\lri_{I,\bfr_{I}}(\bfG)|
 q^{-s\sum_{i\in I}r_{i}(n-i)}.
\end{equation}

We prove Theorem~\ref{thmABC:thm C} by computing the quantities
$|\rmN^\lri_{I,\bfr_{I}}(\bfG)|$ explicitly in the three cases; see
Proposition~\ref{pro: Mainthm-formulae}. We start with a few
preliminary definitions and a lemma. Given $j\in\N$ and a ring~$R$, we
denote by $\Alt_j(R)$ and $\Sym_j(R)$ the antisymmetric and symmetric
matrices in $\Mat_j(R)$, respectively. Let $i\in[n]_{0}$, and define
\begin{align*}
 \Alt_{2n+\delta,2(n-i)}(\Fq) &= \{x\in\Alt_{2n+\delta}(\Fq)\mid
 \rk(x)=2(n-i)\},\\ \Mat_{n,n-i}(\Fq) &= \{x\in\Mat_{n}(\Fq)\mid
 \rk(x)=n-i\},\\ \Sym_{n,n-i}(\Fq) &= \{x\in\Sym_{n}(\Fq)\mid \rk(x)=n-i\}.
\end{align*}
\begin{lemma}\label{lem:ranks}
For $i\in[n]_0$ we have
\begin{align}
 \vert\Alt_{2n+\delta,2(n-i)}(\Fq)\vert&=
 \binom{n}{i}_{q^{-2}}(q^{-2(i+\delta)-1};q^{-2})_{n-i}\cdot q^{\binom{2n+\delta}{2}
   - \binom{2i+\delta}{2}}, \label{ranks
   F}\\ \vert\Mat_{n,n-i}(\Fq)\vert&=
 \binom{n}{i}_{q^{-1}}(q^{-i-1};q^{-1})_{n-i}\cdot q^{n^2-i^2},\label{ranks
   G}\\ \vert\Sym_{n,n-i}(\Fq)\vert&=
 (q^{-2};q^{-2})^{-1}_{\lfloor(n-i)/2\rfloor}(q^{-i-1};q^{-1})_{n-i}\cdot q^{\binom{n+1}{2}-\binom{i+1}{2}}.\label{ranks
   H}
\end{align}
\end{lemma}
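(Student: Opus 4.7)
The plan is to derive each identity from the classical count of matrices of given rank of the relevant type, combined with a routine $q$-calculus verification. For the antisymmetric case, rank-$2(n-i)$ matrices in $\Alt_{2n+\delta}(\Fq)$ are parametrised by a choice of radical (a subspace of codimension $2(n-i)$ in $\Fq^{2n+\delta}$, contributing $\binom{2n+\delta}{2(n-i)}_q$) together with a non-degenerate alternating form on the $2(n-i)$-dimensional quotient. The number of such forms equals $|\GL_{2(n-i)}(\Fq)|/|\Sp_{2(n-i)}(\Fq)| = q^{(n-i)(n-i-1)}\prod_{j=1}^{n-i}(q^{2j-1}-1)$ by orbit-stabiliser. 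In the general case, rank-$(n-i)$ matrices in $\Mat_n(\Fq)$ are parametrised by image subspace together with a surjection $\Fq^n \twoheadrightarrow \Fq^{n-i}$, giving $|\Mat_{n,n-i}(\Fq)| = \binom{n}{i}_q \prod_{j=0}^{n-i-1}(q^n - q^j)$. For symmetric matrices, the analogous radical-and-form factorisation gives $|\Sym_{n,n-i}(\Fq)| = \binom{n}{i}_q \cdot f(n-i,n-i,q)$, where the classical count of non-degenerate symmetric $r \times r$ matrices is $f(r,r,q) = q^{\binom{r+1}{2}} \prod_{j=1}^{\lceil r/2 \rceil}(1-q^{-(2j-1)})$, which holds uniformly in all characteristics.

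With these classical counts in hand, each of \eqref{ranks F}--\eqref{ranks H} reduces to a formal identity between products of $q$-factors, and the verification uses only the rewrite $(1-q^{-k}) = q^{-k}(q^k-1)$ and the duality $\binom{a}{b}_{q^{-1}} = q^{-b(a-b)}\binom{a}{b}_q$ (together with its $q^{-2}$ analogue). For \eqref{ranks F}, I would split $\binom{2n+\delta}{2(n-i)}_q \prod_{j=1}^{n-i}(q^{2j-1}-1)$ by the parity of the factor exponents: the odd Pochhammer factors cancel the odd factors in the denominator of the Gaussian binomial, and the surviving even factors reassemble as $\binom{n}{i}_{q^{-2}}(q^{-2(i+\delta)-1};q^{-2})_{n-i}$; an exponent check using $\binom{2n+\delta}{2}-\binom{2i+\delta}{2} = (n-i)(2n+2i+2\delta-1)$ matches the power of $q$. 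For \eqref{ranks G}, the check is essentially just a rewrite of $\binom{n}{i}_q \prod_{j=0}^{n-i-1}(q^n-q^j)$ and produces the claimed formula immediately.

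The symmetric identity \eqref{ranks H} is the main obstacle, since the target features the reciprocal $(q^{-2};q^{-2})_{\lfloor(n-i)/2\rfloor}^{-1}$ rather than a Pochhammer product on the numerator, and the index is $\lfloor(n-i)/2\rfloor$ while the classical formula for $f(r,r,q)$ involves $\lceil(n-i)/2\rceil$ odd factors. The resolution is to split $\prod_{k=1}^{r}(q^k-1)$ (appearing in the denominator of $\binom{n}{i}_q$, with $r=n-i$) into odd and even factors; the odd ones cancel the Pochhammer factors of $f(r,r,q)$ and what remains inverts to give precisely $(q^{-2};q^{-2})_{\lfloor r/2 \rfloor}^{-1}$. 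The parity-sensitive identity $\binom{r+1}{2} - \lceil r/2 \rceil^2 = \lfloor r/2 \rfloor(\lfloor r/2 \rfloor + 1)$, verified separately for $r$ even and $r$ odd, then matches the remaining powers of $q$ and completes the verification.
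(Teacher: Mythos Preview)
Your derivation is correct. The paper, however, does not actually prove this lemma: its proof consists entirely of citations, reading ``These are all well-known; see, for instance, \cite[Section~7]{CarlitzHodges/56} for~\eqref{ranks F}, \cite[Proposition~3.1]{LaksovThorup/94} for~\eqref{ranks G} and \cite[Lemma~10.3.1]{Igusa/00} for~\eqref{ranks H}.'' You have therefore supplied a self-contained argument where the paper defers to the literature. Your radical-plus-non-degenerate-form decomposition is precisely the standard route to these classical counts, and the $q$-calculus you outline goes through as stated; in particular, in the symmetric case the odd factors of $\prod_{k=1}^{r}(q^{k}-1)$ in the denominator of $\binom{n}{i}_q$ do cancel the Pochhammer factors $\prod_{j=1}^{\lceil r/2\rceil}(1-q^{-(2j-1)})$ coming from the count of non-degenerate symmetric $r\times r$ matrices, the even factors invert to $(q^{-2};q^{-2})_{\lfloor r/2\rfloor}$, and the parity identity $\binom{r+1}{2}-\lceil r/2\rceil^{2}=\lfloor r/2\rfloor(\lfloor r/2\rfloor+1)$ makes the residual power of $q$ vanish, leaving exactly $q^{\binom{n+1}{2}-\binom{i+1}{2}}$ from the remaining product $\prod_{k=i+1}^{n}(q^{k}-1)$.
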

\begin{proof}
These are all well-known; see, for instance, \cite[Section~7]{CarlitzHodges/56}
for \eqref{ranks F}, \cite[Proposition~3.1]{LaksovThorup/94} for
\eqref{ranks G} and \cite[Lemma~10.3.1]{Igusa/00} for \eqref{ranks H}.
\end{proof}

\begin{definition}\label{def:polys thm C}
For $I=\{i_1,\dots,i_l\}_<\subseteq[n-1]_0$ set
\begin{align}
 f_{F_{n,\delta},I}(X) &
 =\binom{n}{I}_{X^2}(X^{2(i_{1}+\delta)+1};X^2)_{n-i_{1}},\label{eq:f
 type F}\\ f_{G_{n},I}(X) &
 =\binom{n}{I}_{X}(X^{i_1+1};X)_{n-i_1},\label{eq:f type G}\\
 f_{H_{n},I}(X) &
 =\left(\prod_{j=1}^{l}(X^2;X^2)^{-1}_{\lfloor\mu_{j}/2\rfloor}\right)(X^{i_1+1};X)_{n-i_1}.\nonumber
 \end{align}
\end{definition}

\begin{remark}
 We note that comparison with Lemma~\ref{lem:ranks} shows that the
 polynomials $f_{\bfG,\{i\}}(X)$, where
 $\bfG\in\{F_{n,\delta},G_n,H_n\}$ and $i\in[n-1]_0$, give, in effect,
 the Poincar\'e polynomials of the determinantal varieties of
 (symmetric or antisymmetric) matrices of given rank. In
 Proposition~\ref{pro:combinatorial formulae} we give an
 interpretation of the polynomials $f_{F_{n,\delta},I}(X)$ and
 $f_{G_n,I}(X)$ in terms of generating functions over descent classes
 in Weyl groups of type~$B$. In Conjecture~\ref{con:L} we record a
 conjectural formula of this type for the polynomials~$f_{H_n,I}(X)$.
\end{remark}

\begin{proposition}\label{pro: Mainthm-formulae}
 Let $I\subseteq[n-1]_{0}$ and $\bfr_{I}\in\N^{I}$.  Then we have
\begin{align}
    |\rmN^\lri_{I,\bfr_{I}}(F_{n,\delta})| &=
    f_{F_{n,\delta},I}(q^{-1}) \, q^{\sum_{i\in
        I}r_{i}\left(\binom{2n+\delta}{2}-\binom{2i+\delta}{2}\right)}, \label{FormulaF}\\
  \label{FormulaG}
    |\rmN^\lri_{I,\bfr_{I}}(G_{n})| &= f_{G_n,I}(q^{-1}) \,
    q^{\sum_{i\in I}r_{i}(n^{2}-i^{2})},\\
  \label{FormulaH}
    |\rmN^\lri_{I,\bfr_{I}}(H_{n})| &=
    f_{H_n,I}(q^{-1})
    \, q^{\sum_{i\in
        I}r_{i}\left(\binom{n+1}{2}-\binom{i+1}{2}\right)}.
\end{align}
\end{proposition}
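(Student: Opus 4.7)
The plan is to translate each count $|\rmN^\lri_{I,\bfr_I}(\bfG)|$ into an enumeration of matrices over $\lri/\mfp^N$ with prescribed elementary-divisor type, and to evaluate these enumerations by induction on $l = |I|$ via a Schur-complement decomposition. In each of the three cases, the map $\underline{w} \mapsto \calR_\Lambda(\underline{w})$ is a bijection between $W_N(\lri)$ and the primitive matrices over $\lri/\mfp^N$ of the relevant flavour—antisymmetric $(2n+\delta)\times(2n+\delta)$ for $F$, arbitrary $n\times n$ for $G$, symmetric $n\times n$ for $H$—and the block-antisymmetric shape of $\calR_{G_n}$ and $\calR_{H_n}$ shows that $\nu(\calR_\Lambda(\underline{w}))$ equals the elementary-divisor vector of the inner block. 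The base case $I = \varnothing$ (where $i_1 = n$ and $N = 0$) carries the conventional value $1$, matching $f_{\bfG,\varnothing}(q^{-1}) = 1$.

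For the inductive step I would fix a mod-$\mfp$ reduction $\overline{M}$; its rank is $n - i_l$ (respectively $2(n - i_l)$ for type $F$), and Lemma~\ref{lem:ranks} counts the possibilities. A basis change over $\Fq$ puts $\overline{M}$ in block form $\mathrm{diag}(\Sigma, 0)$ with $\Sigma$ invertible, so any lift decomposes as a block matrix $M$ with invertible top-left block $A \equiv \Sigma \pmod{\mfp}$ and remaining blocks divisible by $\pi$. The Schur-complement identity
\[
P^\mathrm{t} M P \;=\; \mathrm{diag}\bigl(A,\; C \mp B^\mathrm{t} A^{-1} B\bigr), \qquad P = \bigl(\begin{smallmatrix} I & -A^{-1}B \\ 0 & I \end{smallmatrix}\bigr),
\]
(with the sign adjusted to each flavour, and the generic-matrix case $G$ using $P M Q$ with independent unitriangular $P, Q$) splits the elementary-divisor profile of $M$ into $(0^{n - i_l})$ from the invertible $A$-block concatenated with the profile of the Schur complement. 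The specified $\nu$ is thus equivalent to the Schur complement being $\pi^{r_{i_l}} D$, where $D$ is an arbitrary matrix of the same flavour, one size smaller, defined over $\lri/\mfp^{N - r_{i_l}}$, with reduced profile $(0^{\mu_{l-1}}, r_{i_{l-1}}^{\mu_{l-2}}, \ldots, (N - r_{i_l})^{\mu_0})$.

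The lifts of $A$ (an arbitrary invertible lift of $\Sigma$) and of the off-diagonal blocks (matrices divisible by $\pi$) may be chosen freely, while the remaining diagonal block is forced by the Schur relation. A direct dimension count identifies the number of free parameters as $q^{(N-1) \cdot a(\bfG, i_l)}$, using the elementary identities $\binom{n - i_l + 1}{2} + (n - i_l) i_l = a(H_n, i_l)$, $(n - i_l)^2 + 2 (n - i_l) i_l = a(G_n, i_l)$, and $\binom{2(n-i_l)}{2} + 2 (n - i_l)(2 i_l + \delta) = a(F_{n,\delta}, i_l)$. Combining this with Lemma~\ref{lem:ranks} yields the recursion
\[
|\rmN^\lri_{I, \bfr_I}(\bfG)| \;=\; \rho_\bfG \cdot q^{(N - 1) \cdot a(\bfG, i_l)} \cdot |\rmN^\lri_{I', \bfr_{I'}}(\bfG')|,
\]
where $\rho_\bfG$ is the appropriate rank-stratum cardinality from Lemma~\ref{lem:ranks} (namely $|\Alt_{2n+\delta, 2(n-i_l)}(\Fq)|$ for $F$, $|\Mat_{n, n-i_l}(\Fq)|$ for $G$, and $|\Sym_{n, n-i_l}(\Fq)|$ for $H$), $\bfG'$ is the scheme of the same type as $\bfG$ but with $n$ replaced by $i_l$, and $I' = I \setminus \{i_l\} \subseteq [i_l - 1]_0$. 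Iterating, and using that $a(\bfG, i) - a(\bfG', i) = a(\bfG, i_l)$ is independent of $i$, the $q$-exponents collect to $\sum_{i \in I} r_i \cdot a(\bfG, i)$, while the prefactors telescope to $f_{\bfG, I}(q^{-1})$ via $\binom{n}{I}_X = \binom{n}{i_l}_X \binom{i_l}{I'}_X$ together with the Pochhammer identity $(X^{i_l + c}; X)_{n - i_l} (X^{i_1 + c}; X)_{i_l - i_1} = (X^{i_1 + c}; X)_{n - i_1}$ (with $X^2$ in place of $X$ in the $F$ case).

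The main technical obstacle will be the $H_n$ case: for odd residue characteristic, the $GL_n(\Fq)$-orbits on $\Sym_{n, n-i_l}(\Fq)$ split by discriminant class, so the choice of $\Sigma$ is not canonical. I would circumvent this by verifying that the fiber size in the Schur step depends only on the block dimensions, not on $\Sigma$, so that summing over the entire rank stratum yields $|\Sym_{n, n - i_l}(\Fq)|$ uniformly—consistent with Lemma~\ref{lem:ranks}, which already counts symmetric matrices of a given rank aggregated over discriminant classes.
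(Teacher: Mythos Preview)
Your proposal is correct and follows essentially the same inductive strategy as the paper: reduce, count the rank stratum over $\Fq$ via Lemma~\ref{lem:ranks}, count lifts, and recurse on $I' = I\setminus\{i_l\}$ with $n$ replaced by $i_l$. The recursion you write down,
\[
|\rmN^\lri_{I,\bfr_I}(\bfG)| \;=\; \rho_\bfG \cdot q^{(N-1)\,a(\bfG,i_l)} \cdot |\rmN^\lri_{I',\bfr_{I'}}(\bfG')|,
\]
is exactly the one the paper obtains (it factors $q^{(N-1)a}$ as $q^{(r_{i_l}-1)a}\cdot q^{(N-r_{i_l})a}$ by first reducing modulo $\mfp^{r_{i_l}}$ rather than $\mfp$, but this is purely organisational). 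The only substantive difference is that you make the Schur-complement mechanism explicit, whereas the paper simply asserts the lift counts; your version therefore supplies the justification the paper omits. Your closing remark on the $H_n$ case---that the fibre size depends only on the block dimensions and not on the congruence class of $\Sigma$, so the discriminant splitting (and the characteristic-$2$ subtleties) are immaterial---is exactly the point, and is consistent with the paper's later remark that the orbit-based approach becomes awkward for symmetric matrices at $p=2$ while the direct count does not.
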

\begin{proof}
  We write $\rho_{i_l}:\mathrm{N}^\lri_{I,r_{I}}(\bfG)\rightarrow
  W_{r_{i_l}}(\lri)$ for the map given by reduction of entries modulo
  $q^{r_{i_l}}$. We first prove \eqref{FormulaF}. There are
  $|\Alt_{2n+\delta,2(n-i_{l})}(\Fq)|
  q^{\left(r_{i_{l}}-1\right)\left(\binom{2n+\delta}{2}-\binom{2i_{l}+\delta}{2}\right)}$
  elements in
  $\rho_{{i_{l}}}(\mathrm{N}^\lri_{I,\bfr_{I}}(F_{n,\delta}))$. Each
  such element has
\[
 q^{\left(\sum_{i\in
     I,i<i_{l}}r_{i}\right)\left(\binom{2n+\delta}{2}-\binom{2i_{l}+\delta}{2}\right)}|\rmN^\lri_{I\setminus\{i_{l}\},r_{I\setminus\{i_{l}\}}}(F_{i_{l},\delta})|
\]
 lifts to an element in $\mathrm{N}^\lri_{I,\bfr_{I}}(F_{n,\delta})$. By
 \eqref{ranks F} we thus get
\begin{align*}
 |\rmN^\lri_{I,\bfr_{I}}(F_{n,\delta})| &
 =|\Alt_{2n+\delta,2(n-i_{l})}(\Fq)|q^{\left(N-1\right)\left(\binom{2n+\delta}{2}-\binom{2i_{l}+\delta}{2}\right)}|\rmN^\lri_{I\setminus\{i_{l}\},r_{I\setminus\{i_{l}\}}}(F_{i_{l},\delta})|\\ &
 =\binom{n}{i_l}_{q^{-2}}(q^{-2(i_l+\delta)-1};q^{-2})_{n-i_l}
 q^{N\left(\binom{2n+\delta}{2}-\binom{2i_{l}+\delta}{2}\right)}|\rmN^\lri_{I\setminus\{i_{l}\},r_{I\setminus\{i_{l}\}}}(F_{i_{l},\delta})|.
\end{align*}
Working recursively in this way, we obtain
\begin{align*}
 |\rmN^\lri_{I,\bfr_{I}}(F_{n,\delta})| &
 =\prod_{j=1}^l \binom{i_{j+1}}{i_{j}}_{q^{-2}}(q^{-2(i_j+\delta)-1};q^{-2})_{i_{j+1}-i_{j}}\cdot q^{\left(\sum_{i\in I, i\leq i_j}r_i\right)\left(\binom{2i_{j+1}+\delta}{2} - \binom{2i_j+\delta}{2}\right)}\\
&=\binom{n}{I}_{q^{-2}}(q^{-2(i_1+\delta)-1};q^{-2})_{n-i_1}\cdot q^{\sum_{i\in I}r_i\left(\binom{2n+\delta}{2}-\binom{2i+\delta}{2}\right)}.
\end{align*}

Next we prove \eqref{FormulaG}. There are
$|\Mat_{n,n-i_{l}}(\Fq)|q^{(r_{i_{l}}-1)(n^{2}-i_{l}^{2})}$ elements
in the set $\rho_{{i_{l}}}(\mathrm{N}^\lri_{I,\bfr_{I}}(G_{n}))$. Each
such element has
\[ q^{\left(\sum_{i\in
I,i<i_{l}}r_{i}\right)\left(n^{2}-i_{l}^{2}\right)}|\rmN^\lri_{I\setminus\{i_{l}\},r_{I\setminus\{i_{l}\}}}(G_{i_{l}})|\]
lifts to an element in $\mathrm{N}^\lri_{I,\bfr_{I}}(G_{n})$. By \eqref{ranks
G} we thus get
\begin{align*}
 |\rmN^\lri_{I,\bfr_{I}}(G_{n})| &
 =|\Mat_{n,n-i_{l}}(\Fq)|q^{\left(N-1\right)(n^{2}-i_{l}^{2})}|\rmN^\lri_{I\setminus\{i_{l}\},r_{I\setminus\{i_{l}\}}}(G_{i_{l}})|\\ &
 =\binom{n}{i_l}_{q^{-1}}(q^{-i_l-1};q^{-1})_{n-i_l}q^{N(n^{2}-i_{l}^{2})}|\rmN^\lri_{I\setminus\{i_{l}\},r_{I\setminus\{i_{l}\}}}(G_{i_{l}})|.\end{align*}
Working recursively in this way, we obtain
\begin{align*}
  |\rmN^\lri_{I,\bfr_{I}}(G_{n})| &
  =\prod_{j=1}^{l}\binom{i_{j+1}}{i_{j}}_{q^{-1}}(q^{-i_j-1};q^{-1})_{i_{j+1}-i_j}\cdot q^{\left(\sum_{i\in
      I,i\leq i_{j}}r_{i}\right)(i_{j+1}^{2}-i_{j}^{2})}\\ &
  =\binom{n}{I}_{q^{-1}}(q^{-i_1-1};q^{-1})_{n-i_1}\cdot q^{\sum_{i\in I}r_i(n^2-i^2)}.\end{align*}

Finally we prove \eqref{FormulaH}. There are $|\Sym_{n,n-i_{l}}(\Fq)|
q^{(r_{i_{l}}-1)\left(\binom{n+1}{2}-\binom{i_{l}+1}{2}\right)}$
elements in
$\rho_{{i_{l}}}(\mathrm{N}^\lri_{I,\bfr_{I}}(H_{n}))$. Each such
element has \[ q^{\left(\sum_{i\in
    I,i<i_{l}}r_{i}\right)\left(\binom{n+1}{2}-\binom{i_{l}+1}{2}\right)}|\rmN^\lri_{I\setminus\{i_{l}\},r_{I\setminus\{i_{l}\}}}(H_{i_{l}})|\]
lifts to an element in $\mathrm{N}^\lri_{I,\bfr_{I}}(H_{n})$. By \eqref{ranks
  H} we thus get
\begin{align*}
  |\rmN^\lri_{I,\bfr_{I}}(H_{n})| &
  =|\Sym_{n,n-i_{l}}(\Fq)|q^{\left(N-1\right)\left(\binom{n+1}{2}-\binom{i_{l}+1}{2}\right)}|\rmN^\lri_{I\setminus\{i_{l}\},r_{I\setminus\{i_{l}\}}}(H_{i_{l}})|\\ &
  =(q^{-2};q^{-2})^{-1}_{\lfloor(n-i_l)/2\rfloor}(q^{-i_l-1};q^{-1})_{n-i_l} q^{N\left(\binom{n+1}{2}-\binom{i_{l}+1}{2}\right)}|\rmN^\lri_{I\setminus\{i_{l}\},r_{I\setminus\{i_{l}\}}}(H_{i_{l}})|.
\end{align*}
Working recursively in this way, we obtain
\begin{align*}
  |\rmN^\lri_{I,\bfr_{I}}(H_{n})| &
  =\prod_{j=1}^{l}(q^{-2};q^{-2})^{-1}_{\lfloor\mu_{j}/2\rfloor}(q^{-i_j-1};q^{-1})_{i_{j+1}-i_j}\cdot q^{\left(\sum_{i\leq
      i_{j}}r_{i}\right)\left(\binom{i_{j+1}+1}{2}-\binom{i_{j}+1}{2}\right)}\\ &
  =\left(\prod_{j=1}^{l}(q^{-2};q^{-2})_{\lfloor\mu_{j}/2\rfloor}^{-1}\right)(q^{-i_1-1};q^{-1})_{n-i_1}\cdot q^{\sum_{i\in
      I}r_{i}\left(\binom{n+1}{2}-\binom{i+1}{2}\right)},
\end{align*}
and the proposition is proved.
\end{proof}

\begin{remark}
 An alternative approach to the proof of Proposition~\ref{pro:
   Mainthm-formulae} is to observe that suitable groups act on the
   sets $ \mathrm{N}^\lri_{I,\bfr_{I}}(\bfG)$ -- viewed as subsets of
   $\Mat_r(\lri/\mfp^N)$ -- with few orbits. For example, the group
   $\GL_{2n+\delta}(\lri)$ acts transitively on each of the sets $
   \mathrm{N}^\lri_{I,\bfr_{I}}(F_{n,\delta})$, viewed as sets of
   antisymmetric $(2n+\delta)\times(2n+\delta)$-matrices, via
   simultaneous row- and column-operations, that is via the action
   $(g,x)\mapsto gxg^\mathrm{t}$, reducing the computations of the
   numbers $|\rmN^\lri_{I,\bfr_I}(F_{n,\delta})|$ to stabiliser
   computations. A similar argument works for the groups of
   type~$G$. For groups of type $H$, however, this approach leads one
   to consider equivalence classes of quadratic forms over compact
   discrete valuation rings of characteristic zero. This is
   straightforward if the residue field characteristic is odd, but
   much more complicated if~$p=2$, obscuring the fact that the
   resulting formula~\eqref{FormulaH} holds uniformly for all~$p$. A
   similar phenomenon seems to occur in the computation of the
   integral \eqref{pvs sym} over the relative invariant of the
   prehomogeneous vector space of symmetric matrices; cf.\ the remark
   on the bottom of p.\ 177 in \cite{Igusa/00}.
\end{remark}

We now finish the proof of Theorem~\ref{thmABC:thm C}. For
$\zeta_{F_{n,\delta}(\lri)}(s)$ we obtain, by~\eqref{rewrite
zeta=poincare} and~\eqref{FormulaF},
\begin{align*}
 \zeta_{F_{n,\delta}(\lri)}(s) &
 =\sum_{I\subseteq[n-1]_{0}}\sum_{\bfr_{I}\in\N^{I}}|\rmN^\lri_{I,\bfr_{I}}(F_{n,\delta})|q^{-s\sum_{i\in
 I}r_{i}(n-i)}\\ &
 =\sum_{I\subseteq[n-1]_{0}}f_{F_{n,\delta},I}(q^{-1})\sum_{\bfr_{I}\in\N^{I}}q^{\sum_{i\in
 I}r_{i}\left(\binom{2n+\delta}{2}-\binom{2i+\delta}{2}-(n-i)s\right)}\\
 & =\sum_{I\subseteq[n-1]_{0}}f_{F_{n,\delta},I}(q^{-1})\prod_{i\in
 I}\frac{q^{\binom{2n+\delta}{2}-\binom{2i+\delta}{2}-(n-i)s}}{1-q^{\binom{2n+\delta}{2}-\binom{2i+\delta}{2}-(n-i)s}}.\end{align*}
 Similarly we obtain the following formulae for
 $\zeta_{G_{n}(\lri)}(s)$ and $\zeta_{H_{n}(\lri)}(s)$ by
 combining~\eqref{rewrite zeta=poincare} with \eqref{FormulaG}
 and~\eqref{FormulaH}, respectively:
\begin{align*} \zeta_{G_{n}(\lri)}(s) &
  =\sum_{I\subseteq[n-1]_{0}}f_{G_{n},I}(q^{-1})\prod_{i\in
    I}\frac{q^{n^{2}-i^{2}-(n-i)s}}{1-q^{n^{2}-i^{2}-(n-i)s}},\\
    \zeta_{H_{n}(\lri)}(s) &
    =\sum_{I\subseteq[n-1]_{0}}f_{H_{n},I}(q^{-1})\prod_{i\in
    I}\frac{q^{\binom{n+1}{2}-\binom{i+1}{2}-(n-i)s}}{1-q^{\binom{n+1}{2}-\binom{i+1}{2}-(n-i)s}}.\end{align*}
    This concludes the proof of Theorem~\ref{thmABC:thm C}.

\section{A multinomial-type identity and signed permutation statistics}\label{sec:binomial}
In this section we prove Proposition~\ref{pro:multinomial}, express the
polynomials $f_{F_{n,\delta},I}(X)$ and $f_{G_n,I}(X)$ defined
in~\eqref{eq:f type F} and \eqref{eq:f type G} in terms of generating
functions over descent classes in Weyl groups of type $B$, and compute
a number of joint distribution of statistics on Weyl groups of types
$B$ and~$A$. We remark that Proposition~\ref{pro:multinomial} may well
have a proof in the context of basic hypergeometric series. It
resembles, for instance, the $q$-multinomial theorem;
cf.~\cite[Exercise 1.3(ii)]{GasperRahman/04}. Lacking a suitable
reference, we prove it here directly.
\subsection{Proof of Proposition~\ref{pro:multinomial}}\label{subsec:binomial}
Recall from Section~\ref{subsec:Notation} that given a subset $I\subseteq\N$ we write $I_0$ for $I\cup\{0\}$ and for
$a,b\in\Z$ we write $aI+b=b+aI$ for the set $\{ai+b \mid i\in I\}$.
On several occasions we will use the bijections
\begin{align}
 \{I\mid I\subseteq[n-1-j]\} \label{bijections}&
 \longleftrightarrow\{I\subseteq[n-1]_{0}\mid\min\{I\cup\{n\}\}=j\}\\ I
 & \longmapsto I_{0}+j,\nonumber
\end{align}
for $j\in[n-1]_0$. We will also make use of the following, easily
verifiable identities:
\begin{align}
 \binom{n}{I_{0}+j}_{X} &
=\binom{n}{j}_{X}\binom{n-j}{I}_{X},\quad\text{for
}j\in[n-1]_0,\, I\subseteq[n-1-j],\label{eq:Binom-identity-I0+j}\\
\binom{n}{n-I}_{X} & =\binom{n}{I}_{X},\quad\text{for
}I\subseteq[n]_{0},\label{eq:Binom-identity-m-I}\\ \binom{n}{j}_X &=
\binom{n}{j}_{X^{-1}}X^{j(n-j)},\quad\text{for
}j\in[n-1]_0.\label{eq:binom-p-inv}
\end{align}
The following is known as the $q$-binomial theorem.

\begin{lemma} \label{lem:q-Chu-Vandermonde}
 For $n\in\mathbb{N}$ we have
\begin{equation*}
(ZY;X)_{n}=\sum_{j=0}^{n}\binom{n}{j}_{X}Z^{j}(Z;X)_{n-j}(Y;X)_{j}.
\end{equation*}
\end{lemma}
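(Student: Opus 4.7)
The stated identity is a close relative of the classical $q$-Chu-Vandermonde sum; the plan is to prove it by induction on~$n$. Both sides are polynomials in $Z$ (equivalently, in $Y$) of degree~$n$, and on formal-power-series grounds it suffices to verify the identity at the level of these polynomials. The base case $n=0$ is immediate, since both sides equal~$1$, and the case $n=1$ can be checked directly: the right-hand side is $(1-Z) + Z(1-Y) = 1 - ZY$, as required.

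For the inductive step, assume the identity at level~$n$ and consider
\begin{equation*}
 (ZY;X)_{n+1} = (1-ZYX^{n})(ZY;X)_{n}
 = (1-ZYX^n)\sum_{j=0}^{n}\binom{n}{j}_{X} Z^{j}(Z;X)_{n-j}(Y;X)_{j}.
\end{equation*}
The plan is then to absorb the prefactor $1 - ZYX^n$ into the sum by the telescoping identities
\begin{equation*}
 (Z;X)_{n+1-j} = (1-ZX^{n-j})(Z;X)_{n-j}, \qquad (Y;X)_{j+1} = (1-YX^{j})(Y;X)_{j},
\end{equation*}
writing $1-ZYX^{n} = (1-ZX^{n-j}) + ZX^{n-j}(1-YX^{j})\cdot X^{\,\bullet}$ after choosing the correct power of $X$ (so that the two telescoping moves account exactly for the missing factors $1-ZX^{n-j}$ and $1-YX^{j}$ needed to produce $(Z;X)_{n+1-j}$ and $(Y;X)_{j+1}$, respectively). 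Re-indexing the second piece $j \mapsto j-1$ then packages the sum as
\begin{equation*}
 \sum_{j=0}^{n+1}\Bigl(\binom{n}{j}_X + X^{\,?}\binom{n}{j-1}_X\Bigr) Z^{j}(Z;X)_{n+1-j}(Y;X)_{j},
\end{equation*}
and invoking one of the two Pascal-type recurrences
\begin{equation*}
 \binom{n+1}{j}_{X} = \binom{n}{j-1}_{X} + X^{j}\binom{n}{j}_{X} = X^{n+1-j}\binom{n}{j-1}_{X} + \binom{n}{j}_{X}
\end{equation*}
finishes the induction.

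The main obstacle is purely bookkeeping: one needs to split $1-ZYX^{n}$ so that the powers of $X$ which appear when re-indexing the shifted summand match the $X$-power occurring in one of the two Pascal recurrences above. As an alternative route, should the bookkeeping prove awkward, one can expand both sides as polynomials in $Z$ via the classical $q$-binomial theorem $(a;X)_{m}=\sum_{k=0}^{m}\binom{m}{k}_{X}(-a)^{k}X^{\binom{k}{2}}$ and compare coefficients; matching the coefficient of~$Z^{k}$ on both sides reduces to a known single-sum $q$-Vandermonde evaluation, so the identity follows without any explicit induction at all.
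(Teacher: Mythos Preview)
Your plan is correct and the bookkeeping works out cleanly. In fact the decomposition
\[
1-ZYX^{n}=(1-ZX^{n-j})+ZX^{n-j}(1-YX^{j})
\]
is already exact, so no extra power of $X$ is needed; after re-indexing the second piece you obtain the combination $\binom{n}{j}_{X}+X^{n+1-j}\binom{n}{j-1}_{X}$, which is precisely the second Pascal recurrence you list, and the induction closes.

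By contrast, the paper does not give its own argument at all: it simply cites the identity as Formula~1.16 in Gasper's survey on $q$-series. So your write-up is genuinely different in that it supplies a self-contained proof rather than deferring to the literature. The benefit of the citation is brevity and the signal that this is a standard result (the $q$-binomial theorem); the benefit of your approach is that the paper becomes self-contained and the reader sees why the identity holds. Your alternative route via expanding both sides with $(a;X)_{m}=\sum_{k}\binom{m}{k}_{X}(-a)^{k}X^{\binom{k}{2}}$ and matching coefficients is also valid and amounts to reducing to the terminating $q$-Vandermonde sum, which is essentially what the cited reference does.
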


\begin{proof}
See, for example, \cite[Formula~1.16]{Gasper}.
\end{proof}

We begin by proving a special case of
Proposition~\ref{pro:multinomial}. We set, for $n\in\N$,
\begin{align*}
\mathcal{A}_n(X,Z) &:= \sum_{I\subseteq[n-1]}\binom{n}{I}_{X^{-1}}\prod_{i\in
I}\gp{(X^{i}Z)^{n-i}},\\
\mathcal{B}_n(X,Y,Z) &:=\sum_{I\subseteq[n-1]_0}\binom{n}{I}_{X^{-1}}(YX^{-i_1-1};X^{-1})_{n-i_1}
\prod_{i\in I}\gp{(X^{i}Z)^{n-i}}.
\end{align*}
Note that $\mathcal{A}_n(X,Z) := \mathcal{B}_n(X,0,Z)(1-Z^n)$.
\begin{proposition}\label{pro:multinomial A}
For $n\in\N$ we have
\begin{equation}\label{equ:binomial A}
\mathcal{A}_n(X,Z)=\frac{1-Z^{n}}{(Z;X)_{n}}=\sum_{I\subseteq[n-1]}\binom{n}{I}_{X^{-1}}\prod_{i\in
I}\gp{(X^{n-i}Z)^{i}}.
\end{equation}
\end{proposition}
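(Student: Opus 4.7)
The plan is to prove both equalities of \eqref{equ:binomial A} by induction on $n$, noting first that the second equality is a routine relabeling. Indeed, the complementation $I\mapsto n-I:=\{n-i\mid i\in I\}$ is an involution on subsets of $[n-1]$. Its gap-reversing effect on multinomial coefficients, together with \eqref{eq:Binom-identity-m-I} (applied with $X$ replaced by $X^{-1}$), yields $\binom{n}{n-I}_{X^{-1}}=\binom{n}{I}_{X^{-1}}$; under the same relabeling, the factor $\gp{(X^{n-i}Z)^{i}}$ indexed by $i\in n-I$ becomes $\gp{(X^{i}Z)^{n-i}}$ indexed by $i\in I$. This identifies the two sums in \eqref{equ:binomial A}, so only the first equality requires real work.

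For the first equality I induct on $n$; the case $n=1$ reduces to $1=(1-Z)/(1-Z)$. In the inductive step I partition the sum defining $\mathcal{A}_n(X,Z)$ according to $j:=\min I$, with $I=\varnothing$ contributing $1$. For $j\in[n-1]$, the bijection \eqref{bijections} writes each such $I$ uniquely as $I=\{j\}\cup(I'+j)$ with $I'\subseteq[n-1-j]$. Identity \eqref{eq:Binom-identity-I0+j} factorises $\binom{n}{I}_{X^{-1}}=\binom{n}{j}_{X^{-1}}\binom{n-j}{I'}_{X^{-1}}$, and the reindexing $i=k+j$ rewrites $\prod_{i\in I}\gp{(X^iZ)^{n-i}}=\gp{(X^jZ)^{n-j}}\prod_{k\in I'}\gp{(X^{k}(X^jZ))^{(n-j)-k}}$. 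Summing over $I'$ then produces $\gp{(X^jZ)^{n-j}}\,\mathcal{A}_{n-j}(X,X^jZ)$. Applying the inductive hypothesis and the splitting $(Z;X)_j(X^jZ;X)_{n-j}=(Z;X)_n$ collapses this summand to $(X^jZ)^{n-j}(Z;X)_j/(Z;X)_n$.

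Multiplying through by $(Z;X)_n$, the target reduces to
\begin{equation*}
 \sum_{j=0}^{n}\binom{n}{j}_{X^{-1}}(X^jZ)^{n-j}(Z;X)_j=1,
\end{equation*}
where the $j=0$ and $j=n$ summands absorb the $Z^{n}$ and $(Z;X)_n$ pieces left over from the rearrangement. Using \eqref{eq:binom-p-inv}, the factor $\binom{n}{j}_{X^{-1}}(X^j)^{n-j}$ simplifies to $\binom{n}{j}_{X}$, so the identity becomes $\sum_{j=0}^{n}\binom{n}{j}_{X}Z^{n-j}(Z;X)_j=1$. After reindexing $j\mapsto n-j$ and invoking \eqref{eq:Binom-identity-m-I}, this is exactly the $Y=0$ specialisation of the $q$-binomial theorem (Lemma~\ref{lem:q-Chu-Vandermonde}), whose left-hand side $(0;X)_n$ evaluates to $1$.

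The main technical nuisance will be the careful bookkeeping of the $Z\mapsto X^jZ$ shift in the inductive step and tracking $X$- versus $X^{-1}$-binomials so that \eqref{eq:binom-p-inv} may convert one into the other at the right moment. Once the partition by $\min I$ is in place, however, the entire argument reduces, up to routine algebra, to a one-line application of the $q$-binomial theorem at $Y=0$.
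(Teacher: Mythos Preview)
Your proof is correct and follows essentially the same approach as the paper: induction on $n$, partitioning the sum defining $\mathcal{A}_n$ according to $\min I$, applying the inductive hypothesis to $\mathcal{A}_{n-j}(X,X^jZ)$, and reducing to the $Y=0$ case of the $q$-binomial theorem (Lemma~\ref{lem:q-Chu-Vandermonde}); the second equality is likewise obtained via the involution $i\mapsto n-i$ and~\eqref{eq:Binom-identity-m-I}. The only differences from the paper are cosmetic---you handle the second equality first and delay the conversion $\binom{n}{j}_{X^{-1}}X^{j(n-j)}=\binom{n}{j}_X$ via~\eqref{eq:binom-p-inv} until after clearing the common denominator, whereas the paper performs this conversion one step earlier.
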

\begin{proof}
We prove the first equation by induction on~$n$.  For $n=1$, we have\[
\mathcal{A}_{1}(X,Z)=\binom{1}{\varnothing}_{X^{-1}}=\frac{1-Z}{(Z;X)_{1}}=1.\]
Suppose now that $n>1$ and that the assertion holds for all~$m<n$. The
key idea for the proof is to re-organise the sum defining
$\mathcal{A}_n$ according to the minima of the indexing subsets. Using
the bijections~\eqref{bijections}, identity
\eqref{eq:Binom-identity-I0+j}, the induction hypothesis and
identity~\eqref{eq:binom-p-inv} we obtain
\begin{align*}
\mathcal{A}_n(X,Z) &
 =\sum_{j=1}^{n}\sum_{\substack{I\subseteq[n-1]\\ \min\{I\cup\{n\}\}=j}
 }\binom{n}{I}_{X^{-1}}\prod_{i\in I}\gp{(X^{i}Z)^{n-i}}\\ &
 =1+\sum_{j=1}^{n-1}\sum_{I\subseteq[n-1-j]}\binom{n}{I_{0}+j}_{X^{-1}}\prod_{i\in
   I_{0}+j}\gp{(X^{i}Z)^{n-i}}\\ &
 =1+\sum_{j=1}^{n-1}\binom{n}{j}_{X^{-1}}\sum_{I\subseteq[n-1-j]}\binom{n-j}{I}_{X^{-1}}\prod_{i\in
   I_{0}+j}\gp{(X^{i}Z)^{n-i}}\\ &
 =1+\sum_{j=1}^{n-1}\binom{n}{j}_{X^{-1}}\gp{(X^{j}Z)^{n-j}}\sum_{I\subseteq[n-1-j]}\binom{n-j}{I}_{X^{-1}}\prod_{i\in
   I}\gp{(X^{i+j}Z)^{n-i-j}}\\ &
 =1+\sum_{j=1}^{n-1}\binom{n}{j}_{X^{-1}}\gp{(X^{j}Z)^{n-j}}\mathcal{A}_{n-j}(X,X^jZ)\\ &
 =1+\sum_{j=1}^{n-1}\binom{n}{j}_{X}X^{-j(n-j)}\gp{(X^{j}Z)^{n-j}}\frac{1-(X^{j}Z)^{n-j}}{(X^{j}Z;X)_{n-j}}\\ &
 =\sum_{j=1}^{n}\binom{n}{j}_X\frac{Z{}^{n-j}}{(X^{j}Z;X)_{n-j}}.
\end{align*}
Writing this expression for $\mathcal{A}_n(X,Z)$ on the common
denominator~$(Z;X)_{n}$, we obtain
\[
 \mathcal{A}_n(X,Z)(Z;X)_{n}=\sum_{j=1}^{n}\binom{n}{j}_XZ{}^{n-j}(Z;X)_{j}=-Z^{n}+\sum_{j=0}^{n}\binom{n}{j}_XZ{}^{n-j}(Z;X)_{j}.
\]
Changing $j$ to $n-j$ and applying Lemma~\ref{lem:q-Chu-Vandermonde}
with $Y=0$ yields
\[
\sum_{j=0}^{n}\binom{n}{j}_XZ{}^{n-j}(Z;X)_{j}=\sum_{j=0}^{n}\binom{n}{j}_XZ{}^{j}(Z;X)_{n-j}=1.\]
Thus
\[ \mathcal{A}_n(X,Z)=\frac{1-Z^{n}}{(Z;X)_{n}}.\]
The second equation in~\eqref{equ:binomial A} follows from the first
equation, by changing $i$ to $n-i$ and using
\eqref{eq:Binom-identity-m-I}.
\end{proof}

We now prove Proposition~\ref{pro:multinomial} in general. We organise
the sum defining $\mathcal{B}_n$ according to the minima of the
indexing subsets.  For {$j\in[n]_{0}$,} let\[
S_{n,j}(X,Z):=\sum_{\substack{I\subseteq[n-1]_{0}\\
\min\{I\cup\{n\}\}=j} }\binom{n}{I}_{X^{-1}}\prod_{i\in
I}\gp{(X^{i}Z)^{n-i}}.\] We claim that, for all $j\in[n]_0$,
\begin{equation}\label{equ:S}
 S_{n,j}(X,Z)=\binom{n}{j}_{X}\frac{Z^{n-j}}{(X^{j}Z;X)_{n-j}}.\end{equation}
This clearly holds for $j=n$, so assume $j\in[n-1]_0$.  Due to the
bijections~\eqref{bijections} and the identities
\eqref{eq:Binom-identity-I0+j}, \eqref{eq:Binom-identity-m-I}, \eqref{equ:binomial A} and \eqref{eq:binom-p-inv}, we have
\begin{align*}
 S_{n,j}(X,Z) &
 =\sum_{I\subseteq[n-1-j]}\binom{n}{I_0+j}_{X^{-1}}\prod_{i\in
   I_0+j}\gp{(X^{i}Z)^{n-i}}\\ &
 =\binom{n}{j}_{X^{-1}}\sum_{I\subseteq[n-1-j]}\binom{n-j}{I}_{X^{-1}}\prod_{i\in
   I_0+j}\gp{(X^{i}Z)^{n-i}}\\ &
 =\binom{n}{j}_{X^{-1}}\gp{(X^{j}Z)^{n-j}}\mathcal{A}_{n-j}(X,X^{j}Z)\\ &
 =\binom{n}{j}_{X^{-1}}\gp{(X^{j}Z)^{n-j}}\frac{1-(X^{j}Z)^{n-j}}{(X^{j}Z;X)_{n-j}}\\ &
 =\binom{n}{j}_{X}\frac{Z^{n-j}}{(X^{j}Z;X)_{n-j}},\end{align*}
establishing~\eqref{equ:S}. This yields
\begin{align}
 \mathcal{B}_n(X,Y,Z) &=
\sum_{j=0}^n(YX^{-j-1};X^{-1})_{n-j}S_{n,j}(X,Z)\nonumber\\&=\sum_{j=0}^n\binom{n}{j}_{X}(YX^{-j-1};X^{-1})_{n-j}\frac{Z^{n-j}}{(X^{j}Z;X)_{n-j}}
\label{equ:sum F}.
\end{align}
Writing this expression for $\mathcal{B}_n(X,Y,Z)$ on the common
denominator $(Z;X)_n$, we obtain
\begin{equation}
 \mathcal{B}_n(X,Y,Z)(Z;X)_{n}\label{eq:Numerator-F}
 =\sum_{j=0}^{n}\binom{n}{j}_{X}(YX^{-j-1};X^{-1})_{n-j}Z^{n-j}(Z;X)_{j}.
\end{equation}
Using the identities
$$ (YX^{-j-1};X^{-1})_{n-j}=(X^{-n}Y;X)_{n-j},
$$ for $j\in[n-1]_0$, changing $j$ to $n-j$ and applying
Lemma~\ref{lem:q-Chu-Vandermonde} with $Y$ replaced by $X^{-n}Y$, we
can rewrite the right-hand side of \eqref{eq:Numerator-F} as\[
\sum_{j=0}^{n}\binom{n}{j}_{X}Z^j(Z;X)_{n-j}(X^{-n}Y;X)_{j}=(X^{-n}YZ;X)_{n}.\]
This proves Proposition~\ref{pro:multinomial}.

\begin{remark}\label{rem:subgroup zeta}
Given a $\T$-group $G$, its \emph{subgroup zeta function} is defined
as the Dirichlet series
$$\zeta^<_G(s) := \sum_{H\leq_f G} |G:H|^{-s},$$ where $s$ is a
complex variable and the sum ranges over the subgroups of $G$ of
finite index; cf.~\cite[Chapter 15]{LubotzkySegal/03}. It is well
known that the zeta function of $G=\Z^n$ equals
$$\zeta^<_{\Z^n}(s) = \prod_{i=0}^{n-1}\zeta(s-i)=\prod_{p \text{
    prime}}\frac{1}{(p^{-s};p)_n}=\prod_{p \text{
    prime}}\mathcal{B}_n(p,0,p^{-s}),$$ where $\zeta(s)$ is the
Riemann zeta function; see, for instance,
\cite[Theorem~51.1]{LubotzkySegal/03}. The expression of the local
zeta function of $\zeta^<_{\Z^n}(s)$ in terms of a sum, like the one
defining the function $\mathcal{B}_n$, illustrates a general approach
to the study of local (subgroup and representation) zeta functions of
$\T$-groups developed in~\cite{Voll/10}.
\end{remark}

\subsection{Some Weyl group generating functions} \label{subsec:weyl}
Our main source for background material on Coxeter groups
is~\cite{BjoernerBrenti/05}.  Let $(W,S)$ be a finite Coxeter system,
consisting of a finite Coxeter group $W$ and a set $S$ of Coxeter
generators for~$W$. For $w\in W$, the length of $w$, denoted
by~$l(w)$, is the minimal length of a word in elements of $S$
representing~$w$.  Recall that the (right) descent set of $w$ is
defined as
\begin{equation*}
D(w):=\{s\in S \mid \;l(ws)<l(w)\}.
\end{equation*}
For $I\subseteq S$, we denote by $W_I=\langle I\rangle$ the
corresponding standard parabolic subgroup of~$W$. We also have the
so-called quotient
\begin{equation}\label{def:quotient}
W^I:=\{w\in W \mid \;D(w)\subseteq I^c\}.
\end{equation}
The quotient $W^I$ is the collection of the unique coset
representatives of $W_I$ of shortest length.

Consider now, specifically, Weyl groups of type~$B$. Let
$n\in\N$. Recall that we defined the group $B_n$ as the group of all
bijections $w$ of the set $[\pm n]_0$ such that, for all $a\in[\pm
n]_0$, $w(-a)=-w(a)$. Such bijections are determined by their values
on the positive integers up to~$n$, and thus $B_n$ may be viewed as
the group of signed permutations, that is monomial matrices with
non-zero entries in~$\{-1,1\}$. We write $w=[a_1,\dots,a_n]$ to mean
that, for $i\in[n]$, $w(i)=a_i$. By $S=\{s_0,s_1,\dots,s_{n-1}\}$ we
denote the set of standard Coxeter generators of~$B_n$, that is
$s_i=[1,\dots,i-1,i+1,i,i+2,\dots,n]$ for $i\in[n-1]$ and
$s_0=[-1,2,\dots,n]$.  We frequently identify $S$ with the interval
$[n-1]_0$ in the obvious way.  Given $w\in B_n$, we define
$\coxneg(w):= \#\{i\in[n] \mid\;w(i)<0\}$.

\begin{lemma}\label{lem:Common-denominator}
 Let $(W,S)$ be a finite Coxeter system, $r\in\N$, and let
 $Y_1,\dots,Y_r$ and $Z_i, i\in S$, be independent variables. Let
 $h_w(\bfY)=h_w(Y_1,\dots,Y_r)$, $w\in W$, be polynomials in
 $\mathbb{Q}[Y_1,\dots,Y_r]$.  Then the following identity holds:
\[ \sum_{I\subseteq S}\left(\sum_{w\in
   W^{I^c}}h_{w}(\bfY)\right)\prod_{i\in
   I}\frac{Z_{i}}{1-Z_i}=\frac{\sum_{w\in W}h_{w}(\bfY)\prod_{i\in
   D_W(w)}Z_{i}}{\prod_{i\in S}(1-Z_{i})}.\]
 \end{lemma}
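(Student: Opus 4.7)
The plan is to clear denominators in the target identity and then exchange the order of summation. Multiplying both sides by $\prod_{i\in S}(1-Z_i)$ reduces the claim to the polynomial identity
\begin{equation*}
\sum_{I\subseteq S}\left(\sum_{w\in W^{I^c}}h_w(\bfY)\right)\prod_{i\in I}Z_i\prod_{i\in S\setminus I}(1-Z_i) \;=\; \sum_{w\in W}h_w(\bfY)\prod_{i\in D(w)}Z_i,
\end{equation*}
so what remains is purely combinatorial, independent of the $h_w$, and it suffices to match coefficients of each $h_w(\bfY)$.

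First I would unpack the definition of the quotient: by~\eqref{def:quotient} one has $W^{I^c}=\{w\in W\mid D(w)\subseteq I\}$, so the double sum on the left may be written as $\sum_{w\in W}h_w(\bfY)\sum_{I\supseteq D(w)}\prod_{i\in I}Z_i\prod_{i\in S\setminus I}(1-Z_i)$ after interchanging the summation order. This is the key step: the condition $D(w)\subseteq I$ becomes a condition on $I$, so fixing $w$ one is summing over all $I\subseteq S$ containing $D(w)$.

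Next, for fixed $w$, parametrise $I\supseteq D(w)$ by $I=D(w)\sqcup J$ with $J\subseteq S\setminus D(w)$. Factoring out the $\prod_{i\in D(w)}Z_i$ piece leaves
\begin{equation*}
\sum_{J\subseteq S\setminus D(w)}\prod_{i\in J}Z_i\prod_{i\in (S\setminus D(w))\setminus J}(1-Z_i) \;=\; \prod_{i\in S\setminus D(w)}\bigl(Z_i+(1-Z_i)\bigr) \;=\; 1,
\end{equation*}
by the standard binomial expansion. Hence the inner sum collapses to $\prod_{i\in D(w)}Z_i$, producing exactly the right-hand side and completing the identification.

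There is really no serious obstacle: the lemma is a formal exchange-of-summation argument relying only on the elementary fact that $W^{I^c}$ equals $\{w\in W\mid D(w)\subseteq I\}$ and on the telescoping identity $Z_i+(1-Z_i)=1$. The only thing to be mildly careful about is making sure the role of complements in the definition of $W^J$ is correctly tracked, since the lemma is stated with $W^{I^c}$ rather than $W^I$.
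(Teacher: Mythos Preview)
Your proof is correct and is essentially the same argument the paper has in mind: the paper simply says ``This is an easy application of the inclusion-exclusion principle,'' and your exchange-of-summation followed by the binomial collapse $\prod_{i\in S\setminus D(w)}(Z_i+(1-Z_i))=1$ is precisely that application spelled out.
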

\begin{proof}
This is an easy application of the inclusion-exclusion principle.
\end{proof}

\subsubsection{Joint distribution of $(l,\nega)$ over descent classes of $B_n$}\label{subsec:proof of prop}

\begin{lemma}[Reiner]\label{lem:reiner}
 For $n\in\N$ and $I=\{i_1,\dots,i_l\}_<\subseteq[n-1]_0$ we have
\begin{equation}\label{equ:reiner}
\sum_{w\in B _n^{I^{c}}}X^{l(w)}Y^{\coxneg(w)}=\binom{n}{I}_{X}(-YX^{i_1+1};X)_{n-i_1}.
\end{equation}
\end{lemma}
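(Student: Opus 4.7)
The plan is to derive the identity from the classical parabolic decomposition of $B_n$ relative to the subset $I^c\subseteq S$ of Coxeter generators, combined with the known joint distribution of $l$ and $\coxneg$ over the full group. Recall that for any finite Coxeter system $(W,S)$ and any $J\subseteq S$, multiplication yields a length-additive bijection $W^J\times W_J\to W$. Applied with $W=B_n$ and $J=I^c$, this factorises $B_n$ as $B_n^{I^c}\cdot(B_n)_{I^c}$. Setting $\mu_j=i_{j+1}-i_j$, one identifies $(B_n)_{I^c}=S_{\mu_1}\times\cdots\times S_{\mu_l}$ if $0\in I$ (in which case $\mu_0=0$), and $(B_n)_{I^c}=B_{\mu_0}\times S_{\mu_1}\times\cdots\times S_{\mu_l}$ if $0\notin I$, with the direct factors acting on the successive blocks $[i_j+1,i_{j+1}]$ of $[n]$.

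The main obstacle is the additivity of $\coxneg$ under this decomposition, i.e.\ the claim that $\coxneg(uv)=\coxneg(u)+\coxneg(v)$ for all $u\in B_n^{I^c}$ and $v\in(B_n)_{I^c}$. This additivity fails in general on $B_n$, but holds here because any $u\in B_n^{I^c}$ is increasing on each block and, when $0\notin I$, also satisfies $u(1)>0$, forcing $u(k)>0$ for all $k\in[1,i_1]$. Since $v$ permutes each block and only introduces sign changes inside the first block (and only when $0\notin I$), a short case analysis on $i$ shows that for $i\in[1,i_1]$ the sign of $uv(i)=\mathrm{sign}(v(i))\cdot u(|v(i)|)$ equals $\mathrm{sign}(v(i))$, while for $i\in[i_1+1,n]$ one has $v(i)>0$ in the same block as $i$, so $\mathrm{sign}(uv(i))=\mathrm{sign}(u(v(i)))$. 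Summing, and using that $v$ restricts to a permutation of $[i_1+1,n]$, yields the required additivity.

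Given additivity of both $l$ and $\coxneg$, the joint generating function factorises as
\begin{equation*}
\sum_{w\in B_n}X^{l(w)}Y^{\coxneg(w)}=\biggl(\sum_{u\in B_n^{I^c}}X^{l(u)}Y^{\coxneg(u)}\biggr)\biggl(\sum_{v\in(B_n)_{I^c}}X^{l(v)}Y^{\coxneg(v)}\biggr).
\end{equation*}
The left-hand side equals the standard Poincar\'e-type series $[n]_X!\,(-YX;X)_n$ of $B_n$, where $[n]_X!=\prod_{k=1}^{n}(1-X^k)/(1-X)$. The second factor on the right equals $\prod_{j=1}^{l}[\mu_j]_X!$ if $0\in I$, and $[\mu_0]_X!\,(-YX;X)_{\mu_0}\prod_{j=1}^{l}[\mu_j]_X!$ if $0\notin I$. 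Dividing, and using the identity $[n]_X!/\prod_{j=0}^{l}[\mu_j]_X!=\binom{n}{I}_X$ together with $(-YX;X)_n/(-YX;X)_{\mu_0}=(-YX^{\mu_0+1};X)_{n-\mu_0}$, one obtains $\binom{n}{I}_X(-YX^{i_1+1};X)_{n-i_1}$ in both cases, using $\mu_0=i_1$ (and noting that when $0\in I$ we have $i_1=0$, so $(-YX^{i_1+1};X)_{n-i_1}=(-YX;X)_n$).
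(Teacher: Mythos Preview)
Your proof is correct. The paper itself does not give an independent argument: it merely cites Reiner's result and carries out the notational translation. Your argument via the parabolic factorisation $B_n=B_n^{I^c}\cdot(B_n)_{I^c}$, the identification of $(B_n)_{I^c}$ as $B_{\mu_0}\times S_{\mu_1}\times\cdots\times S_{\mu_l}$ (with the $B_{\mu_0}$-factor trivial when $0\in I$), and the known joint $(l,\coxneg)$-generating function over $B_k$ is exactly Reiner's original approach; the one point requiring care---additivity of $\coxneg$ under the factorisation, which follows since $u\in B_n^{I^c}$ is positive on the first block whenever $0\notin I$---you have handled correctly. In short, you have written out self-contained what the paper outsources to the reference.
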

\begin{proof}
This is proved by Reiner in \cite[Lemma~3.1]{Reiner-Signed-perm}; we
just need to translate between Reiner's notation and ours.  First,
note that Reiner uses `$\coxinv$' to denote the length function $l$ on
$B_n$. Recall that we set $i_{0}=0$, and, for $j\in\{0,1,\dots,l\}$,
\begin{equation}\label{def:mu}
\mu_{j} = i_{j+1}-i_{j}.
\end{equation} The relations~\eqref{def:mu} provide the transition between
Reiner's sets
$$\mathcal{S}=\{\mu_{k},\mu_{k}+\mu_{k-1},\dots,\mu_{k}+\dots+\mu_{1}\}\subseteq[n]$$
and our sets $I\subseteq[n-1]_0$. Moreover, $\mu_0=i_1$. Given
$\mathcal{S}\subseteq[n]$, an element $w\in B_n$ satisfies Reiner's
relation `$D(w)\subseteq \mathcal{S}$' if and only if it satisfies
$D(w)\subseteq I$, which is, by~\eqref{def:quotient}, equivalent to
$w\in B_n^{I^{c}}$.  From \cite[Lemma~3.1]{Reiner-Signed-perm} and
formula (2) in its proof we thus obtain, partly in the notation
of~\cite{Reiner-Signed-perm},
\begin{multline*}
\sum_{w\in B _n^{I^{c}}}X^{l(w)}Y^{\coxneg(w)} =\frac{[\hat{n}]!_{Y,X}}{[\hat{\mu}_{0}]!_{Y,X}[\mu_{1}]!_{X}\cdots[\mu_{l}]!_{X}}
 =\frac{(-XY;X)_{n}[n]!_{X}}{(-XY;X)_{\mu_{0}}[\mu_{0}]_{X}![\mu_{1}]!_{X}\cdots[\mu_{l}]!_{X}}\\=(-YX^{i_1+1};X)_{n-i_1}\frac{[n]!_{X}}{[i_{1}]_{X}![i_{2}-i_{1}]!_{X}\cdots[n-i_l]!_{X}}
 =\binom{n}{I}_{X}(-YX^{i_1+1};X)_{n-i_1}.\end{multline*}
\end{proof}

\begin{proposition}\label{pro:combinatorial formulae}
 Let $n\in\N$, $\delta\in\{0,1\}$ and let $I\subseteq[n-1]_{0}$. The
 polynomials $f_{F_{n,\delta},I}(X)$ and $f_{G_{n},I}(X)$ defined
 in~\eqref{eq:f type F} and \eqref{eq:f type G} satisfy the following
 identities:
 \begin{align*}
  f_{F_{n,\delta},I}(X) &= \sum_{w\in
      B_{n}^{I^{c}}}(-1)^{\coxneg(w)}X^{(2l+(2\delta-1)\coxneg)(w)},\label{equ:combinatorics
      F} \\ f_{G_{n},I}(X) &= \sum_{w\in
      B_{n}^{I^{c}}}(-1)^{\coxneg(w)}X^{l(w)}.\nonumber
 \end{align*}
\end{proposition}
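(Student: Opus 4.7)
The plan is to deduce both identities as specializations of Reiner's formula (Lemma~\ref{lem:reiner}):
\begin{equation*}
\sum_{w\in B_n^{I^{c}}}X^{l(w)}Y^{\coxneg(w)}=\binom{n}{I}_{X}(-YX^{i_1+1};X)_{n-i_1}.
\end{equation*}

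For the $G_n$ identity, I would simply set $Y=-1$. The right-hand side becomes $\binom{n}{I}_{X}(X^{i_1+1};X)_{n-i_1}$, which is by definition $f_{G_n,I}(X)$, while the left-hand side becomes $\sum_{w\in B_n^{I^{c}}}(-1)^{\coxneg(w)}X^{l(w)}$. This gives the second identity directly with no further work.

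For the $F_{n,\delta}$ identity, the idea is a double specialization. First replace $X$ by $X^2$ throughout Reiner's formula, obtaining
\begin{equation*}
\sum_{w\in B_n^{I^{c}}}X^{2l(w)}Y^{\coxneg(w)}=\binom{n}{I}_{X^2}(-YX^{2(i_1+1)};X^2)_{n-i_1}.
\end{equation*}
Then set $Y=-X^{2\delta-1}$. On the right-hand side, $-YX^{2(i_1+1)} = X^{2\delta-1+2i_1+2} = X^{2(i_1+\delta)+1}$, so the product becomes $\binom{n}{I}_{X^2}(X^{2(i_1+\delta)+1};X^2)_{n-i_1}=f_{F_{n,\delta},I}(X)$. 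On the left-hand side, the summand is $X^{2l(w)}(-X^{2\delta-1})^{\coxneg(w)}=(-1)^{\coxneg(w)}X^{2l(w)+(2\delta-1)\coxneg(w)}$, matching the desired expression.

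There is no real obstacle here: the only thing to verify is the bookkeeping of the exponents in the substitution $X\mapsto X^2$, $Y\mapsto -X^{2\delta-1}$, which was precisely arranged so that the shifted factorial $(-YX^{i_1+1};X)_{n-i_1}$ in Reiner's formula matches the Pochhammer factors occurring in Definition~\ref{def:polys thm C}. The combinatorial content is entirely carried by Reiner's result, which already isolates the joint distribution of $(l,\coxneg)$ over the descent classes $B_n^{I^c}$; all we do is specialize the formal variable $Y$ so as to recover the two shifted-factorial patterns that appear in $f_{F_{n,\delta},I}$ and $f_{G_n,I}$.
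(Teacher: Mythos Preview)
Your proposal is correct and is exactly the paper's own proof: the authors simply write ``Replace $(X,Y)$ in~\eqref{equ:reiner} by $(X^2,-X^{2\delta-1})$ in type $F$ and by $(X,-1)$ in type~$G$.'' You have carried out precisely these substitutions, with the exponent bookkeeping spelled out in more detail than the paper itself gives.
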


\begin{proof}
Replace $(X,Y)$ in~\eqref{equ:reiner} by $(X^2,-X^{2\delta-1})$ in type
$F$ and by $(X,-1)$ in type~$G$.
\end{proof}

\subsubsection{Proof and discussion of Proposition~\ref{pro:distribution}}\label{subsec:proof of thm D}

We recall that by Proposition~\ref{pro:multinomial} we have
$$\mathcal{B}_n(X,Y,Z) :=
\sum_{I\subseteq[n-1]_0}\binom{n}{I}_{X^{-1}}(YX^{-i_1-1};X^{-1})_{n-i_1}
\prod_{i\in I}\gp{(X^{i}Z)^{n-i}}=\frac{(X^{-n}YZ;X)_n}{(Z;X)_n}$$ and
by Lemma~\ref{lem:reiner} we have, for all $I\subseteq[n-1]_0$,
$$\binom{n}{I}_{X^{-1}}(-YX^{-i_1-1};X^{-1})_{n-i_1} = \sum_{w\in
  B_n^{I^c}}X^{-l(w)}Y^{\coxneg(w)}.$$ Therefore
Lemma~\ref{lem:Common-denominator}, with
$(W,S)=(B_{n},\{s_0,\dots,s_{n-1}\})$, implies that
$$\mathcal{B}_n(X,-Y,X^nZ) = \frac{\sum_{w\in
B_n}X^{-l(w)}Y^{\coxneg(w)}\prod_{i\in
D(w)}(X^{n+i}Z)^{n-i}}{\prod_{i=0}^{n-1}(1-(X^{n+i}Z)^{n-i})}=
\prod_{i=0}^{n-1}\frac{1+X^{i}YZ}{1-X^{n+i}Z}.$$ Hence
\begin{equation}\label{equ:distribution B}
\sum_{w\in B_n}X^{(\sigma-l)(w)}Y^{\coxneg(w)}Z^{\rmaj(w)} =
\prod_{i=0}^{n-1}\frac{(1+X^{i}YZ)(1-(X^{n+i}Z)^{n-i})}{1-X^{n+i}Z},
\end{equation}
concluding the proof of Proposition~\ref{pro:distribution}.

\smallskip

We see Proposition~\ref{pro:distribution} in the context of a number
of results in the literature which establish multivariate generating
functions describing the joint distributions of various statistics on
finite Weyl groups, sometimes `twisted' by $1$-dimensional
representations; see, for example, \cite{Reiner-Signed-perm,
Reiner/95, Biagioli/06}.  For instance we observe that setting $X=1$
in~\eqref{equ:distribution B} yields a special case of
\cite[Theorem~3.2]{Reiner/95}. Upon setting $Y=Z=1$ in
\eqref{equ:distribution B} we recover
\cite[Theorem~1.1]{StembridgeWaugh/98} for Weyl groups of type~$B$. In
its generality, the latter result describes the generating function
$\sum_{w\in W}X^{(\sigma-l)(w)}$ for a finite Weyl group~$W$ in terms
of the simple root coordinates $b_i$ and the Weyl group's exponents. A
twisted version of this result for Weyl groups of type $B$ is the
following.

\begin{corollary}
$$\sum_{w\in B_n}(-1)^{\coxneg(w)}X^{(\sigma-l)(w)}=0.$$
\end{corollary}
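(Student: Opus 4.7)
The plan is to derive this identity as a direct specialization of Proposition~\ref{pro:distribution}. Recall that in the paper the statistic $\coxneg$ is defined as $\coxneg(w) = \#\{i \in [n] \mid w(i) < 0\}$, which is the same as the statistic $\nega$ appearing in the formula of Proposition~\ref{pro:distribution}; so the two may be used interchangeably.

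First I would substitute $Y = -1$ and $Z = 1$ into the formula
\begin{equation*}
\sum_{w\in B_n} X^{(\sigma-l)(w)}Y^{\nega(w)}Z^{\rmaj(w)} = \prod_{i=0}^{n-1}\frac{(1+X^{i}YZ)(1-(X^{n+i}Z)^{n-i})}{1-X^{n+i}Z}.
\end{equation*}
The left-hand side becomes exactly $\sum_{w\in B_n}(-1)^{\coxneg(w)}X^{(\sigma-l)(w)}$, as desired. For the right-hand side, I would observe that the factor $(1+X^i Y Z)$ indexed by $i = 0$ becomes $1 + X^0 \cdot (-1) \cdot 1 = 0$, so the entire product vanishes.

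A minor bookkeeping issue to address is that the product on the right involves the denominators $1 - X^{n+i} Z$, which one should check do not become $0$ under the substitution (they don't, since they remain $1 - X^{n+i}$, nonzero as elements of $\Q[X]$ after cancellation in the quotient $(1-(X^{n+i})^{n-i})/(1-X^{n+i}) = 1 + X^{n+i} + \cdots + X^{(n-i-1)(n+i)}$); so the right-hand side is a polynomial in $X$ with a manifest zero factor. Hence there is no obstacle beyond verifying the specialization is legitimate, and the corollary follows immediately.
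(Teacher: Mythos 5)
Your proof is correct and follows exactly the paper's approach: the paper's proof is the one-line instruction ``Set $Y=-1$ and $Z=1$ in~\eqref{equ:distribution B},'' which is precisely the specialization of Proposition~\ref{pro:distribution} you carry out. The additional bookkeeping you include about the denominators dividing the numerators, so that the right-hand side is a genuine polynomial with a vanishing factor at $i=0$, is a reasonable sanity check but is not spelled out in the paper.
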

\begin{proof}
Set $Y=-1$ and $Z=1$ in~\eqref{equ:distribution B}.
\end{proof}

Analysing our formulae for $\mathcal{A}_n(X,Z)$ yields formulae over
Weyl groups of type $A$ which are similar to~\eqref{equ:distribution
  B}. In the case of the Weyl group $W=S_n$, with Coxeter generating
set $S=(s_1,\dots,s_{n-1})$ comprising the standard transpositions,
the simple root coordinates $b_i$, $i\in[n-1]$, defined in
Section~\ref{sec:intro} are given by $b_i=i(n-i)$;
cf.~\cite[Remark~1.5]{StembridgeWaugh/98} or
\cite[Plate~I]{Bourbaki/02}. Therefore
\begin{equation}\label{equ:root coordinates S}
\sigma(w) = \sum_{i\in D(w)}b_i = \sum_{i\in D(w)}i(n-i),\quad\text{
  for $w\in S_n$}.
\end{equation}
Here we identified the generating set $S$ with the interval $[n-1]$ in
the obvious way. The statistics $\maj$ and $\rmaj$ on $S_n$ are
defined by setting, for $w\in S_n$, $\maj(w)=\sum_{i\in D(w)}i$ and
$\rmaj(w)=\sum_{i\in D(w)}(n-i)$, respectively.

\begin{proposition} \label{pro:S_n}
\begin{equation}
\sum_{w\in S_n} X^{(\sigma-l)(w)}Z^{\maj(w)}=\sum_{w\in S_n}
X^{(\sigma-l)(w)}Z^{\rmaj(w)}=
\prod_{i=0}^{n-1}\frac{1-(X^iZ)^{n-i}}{1-X^iZ}\label{equ:distribution
  S_n}.
\end{equation}
\end{proposition}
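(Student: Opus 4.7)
The plan is to imitate, in type~$A$, the argument that established Proposition~\ref{pro:distribution} for type~$B$, but using the function $\mathcal{A}_n(X,Z)$ in place of $\mathcal{B}_n(X,Y,Z)$. The crucial additional feature is that Proposition~\ref{pro:multinomial A} supplies \emph{two} equivalent sum expressions for $\mathcal{A}_n(X,Z)$ in~\eqref{equ:binomial A}; these differ in whether $i$ or $n-i$ appears as the exponent of $X^iZ$ in the geometric progression. The first expression will produce the $\rmaj$-version and the second the $\maj$-version.

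The combinatorial input is the classical type-$A$ analogue of Lemma~\ref{lem:reiner}, namely
\begin{equation*}
\sum_{w\in S_n^{I^c}}X^{l(w)}=\binom{n}{I}_{X},\qquad I\subseteq[n-1],
\end{equation*}
which expresses the length generating function over the minimum-length representatives of a Young subgroup as a $q$-multinomial coefficient (see \cite[Section~7.1]{BjoernerBrenti/05}). Substituting $X^{-1}$ for $X$ yields $\binom{n}{I}_{X^{-1}}=\sum_{w\in S_n^{I^c}}X^{-l(w)}$. I would then apply Lemma~\ref{lem:Common-denominator} to the Coxeter system $(S_n,\{s_1,\dots,s_{n-1}\})$, with $h_w=X^{-l(w)}$ and $Z_i=(X^iZ)^{n-i}$. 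Combining with the first equality in~\eqref{equ:binomial A} and the closed form $\mathcal{A}_n(X,Z)=(1-Z^n)/(Z;X)_n$ gives
\begin{equation*}
\sum_{w\in S_n}X^{-l(w)}\prod_{i\in D(w)}(X^iZ)^{n-i}=\frac{1-Z^n}{(Z;X)_n}\prod_{i=1}^{n-1}\bigl(1-(X^iZ)^{n-i}\bigr).
\end{equation*}
Absorbing $1-Z^n=1-(X^0Z)^{n-0}$ into the product over $i\in[n-1]$ turns the right-hand side into $\prod_{i=0}^{n-1}\bigl(1-(X^iZ)^{n-i}\bigr)/(1-X^iZ)$, while expanding the left-hand side using $\sigma(w)=\sum_{i\in D(w)}i(n-i)$ from~\eqref{equ:root coordinates S} and $\rmaj(w)=\sum_{i\in D(w)}(n-i)$ yields the $\rmaj$-formula.

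For the $\maj$-formula I would repeat the same argument, starting from the second equality in~\eqref{equ:binomial A} and applying Lemma~\ref{lem:Common-denominator} with $Z_i=(X^{n-i}Z)^i$. The reindexing $i\mapsto n-i$ shows $\prod_{i=1}^{n-1}\bigl(1-(X^{n-i}Z)^i\bigr)=\prod_{j=1}^{n-1}\bigl(1-(X^jZ)^{n-j}\bigr)$, so the right-hand side agrees with the previous computation. The left-hand side now has exponent $\sum_{i\in D(w)}i=\maj(w)$ on $Z$, while the exponent of $X$ is again $\sigma(w)-l(w)$. No step presents a serious obstacle; the only thing requiring care is the bookkeeping needed to reconcile the indexing of $\mathcal{A}_n(X,Z)$ over subsets of $[n-1]$ with Lemma~\ref{lem:Common-denominator}'s indexing over the full generating set~$S$, and to verify that the factor $1-Z^n$ in the numerator of $\mathcal{A}_n$ is exactly what is needed to extend the denominator product from $i\in[n-1]$ to $i\in[n-1]_0$.
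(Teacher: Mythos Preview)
Your proposal is correct and follows essentially the same approach as the paper's proof: the paper likewise invokes the identity $\binom{n}{I}_{X^{-1}}=\sum_{w\in S_n^{I^c}}X^{-l(w)}$ (citing Stanley rather than Bj\"orner--Brenti), applies Lemma~\ref{lem:Common-denominator} to $(S_n,\{s_1,\dots,s_{n-1}\})$, and combines with the closed form of~$\mathcal{A}_n(X,Z)$. The only cosmetic difference is that the paper derives the $\maj$-formula first (using the second sum in~\eqref{equ:binomial A}) and then remarks that the $\rmaj$-case follows in the same way, whereas you treat $\rmaj$ first; the bookkeeping you flag is handled identically.
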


\begin{proof}
By~\eqref{equ:binomial A} we have
 \[
  \mathcal{A}_n(X,Z)=\sum_{I\subseteq[n-1]}\binom{n}{I}_{X^{-1}}\prod_{i\in
    I}\gp{X^{i(n-i)}Z^{i}}=\frac{1-Z^{n}}{(Z;X)_{n}},\] and by\
    \cite[Proposition~1.3.17]{Stanley/97} we have, for
    $I\subseteq[n-1]$,
 \[
  \binom{n}{I}_{X^{-1}}=\sum_{w\in S_{n}^{I^c}}X^{-l(w)}.
 \]
 Therefore Lemma~\ref{lem:Common-denominator}, with
 $(W,S)=(S_{n},\{s_1,\dots,s_{n-1}\})$, implies that
 \[
  \mathcal{A}_n(X,Z)=\frac{\sum_{w\in S_{n}}X^{-l(w)}\prod_{i\in
      D(w)}X^{i(n-i)}Z^{i}}{\prod_{i=1}^{n-1}(1-(X^{n-i}Z)^{i})}=\frac{\sum_{w\in
      S_{n}}X^{(\sigma-l)(w)}Z^{\maj(w)}}{\prod_{i=1}^{n-1}(1-(X^{n-i}Z)^{i})},
 \]
 and so
 \begin{equation*}
  \sum_{w\in S_{n}}X^{(\sigma-l)(w)}Z^{\maj(w)}
  =\mathcal{A}_n(X,Z)\prod_{i=1}^{n-1}(1-(X^{n-i}Z)^{i})
  =\prod_{i=0}^{n-1}\frac{1-(X^{i}Z)^{n-i}}{1-X^{i}Z}.
 \end{equation*}
The equality involving $\rmaj$ follows similarly, using the second
equality in~\eqref{equ:binomial A}.
\end{proof}

Note that setting $X=1$ in~\eqref{equ:distribution S_n} yields the
Poincar\'e polynomial of $S_{n}$, reflecting the well-known facts that
the statistics $\maj$ and $\rmaj$ on $S_n$ are Mahonian, that is
equidistributed with the length function~$l$. Setting $Z=1$
reproduces~\cite[Remark~1.5]{StembridgeWaugh/98}.

\section{Proof of Theorem~\ref{thmABC:thm B}}\label{sec:proof thm B}

\subsection{Proof of Theorem~\ref{thmABC:thm B}}\label{subsec:thm B}
We start by proving the formulae for the zeta functions of groups of
type $F$ and $G$ given in \eqref{equ:mult F} and~\eqref{equ:mult
G}. Considering the Euler product~\eqref{equ:euler}, it clearly
suffices to establish the following result.

\begin{proposition}\label{pro:thm B type F and G}
For every non-zero prime ideal $\mfp$ of $\Gri$, with $|\Gri:\mfp|=q$,
say, we have
\begin{align}
\zeta_{F_{n,\delta}(\Gri_\mfp)}(s) &=
\frac{(q^{-s};q^{2})_{n}}{(q^{2(n+\delta)-1-s};q^{2})_{n}},\label{pro:Mult-formula-F}\\ \zeta_{G_{n}(\Gri_\mfp)}(s)
&= \frac{(q^{-s};q)_{n}}{(q^{n-s};q)_n}. \label{pro:Mult-formula-G}
\end{align}
\end{proposition}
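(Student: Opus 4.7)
The plan is to derive both identities as direct specialisations of the multinomial identity of Proposition~\ref{pro:multinomial} applied to the additive formulae of Theorem~\ref{thmABC:thm C}. Since the local factors of $\zeta_{F_{n,\delta}(\Gri)}(s)$ and $\zeta_{G_n(\Gri)}(s)$ have already been expressed as sums indexed by subsets $I\subseteq[n-1]_0$, and the right-hand sides of \eqref{pro:Mult-formula-F} and \eqref{pro:Mult-formula-G} are of the exact shape $(X^{-n}YZ;X)_n/(Z;X)_n$ appearing in Proposition~\ref{pro:multinomial}, the whole proof reduces to choosing $X$, $Y$, $Z$ in terms of $q$ and~$s$ so that the two sides match.

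First, for groups of type $G$, I would take $X=q$, $Y=1$ and $Z=q^{n-s}$ in~\eqref{equ:binomial B}. With these choices, $\binom{n}{I}_{X^{-1}}=\binom{n}{I}_{q^{-1}}$, the Pochhammer symbol $(YX^{-i_1-1};X^{-1})_{n-i_1}$ equals $(q^{-i_1-1};q^{-1})_{n-i_1}$, which is $(X^{i_1+1};X)_{n-i_1}|_{X=q^{-1}}$, and the geometric progression term becomes $\gp{(q^{i}Z)^{n-i}}=\gp{q^{(n-i)(n+i-s)}}=\gp{q^{n^2-i^2-(n-i)s}}$, precisely the shape of the factor $a(G_n,i)-(n-i)s$ in Theorem~\ref{thmABC:thm C}. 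Thus the left-hand side of~\eqref{equ:binomial B} equals $\zeta_{G_n(\Gri_\mfp)}(s)$, while the right-hand side becomes $(q^{-n}\cdot q^{n-s};q)_n/(q^{n-s};q)_n=(q^{-s};q)_n/(q^{n-s};q)_n$, yielding~\eqref{pro:Mult-formula-G}.

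For groups of type $F$ the argument is completely analogous but with the substitution $X=q^2$, $Y=q^{1-2\delta}$ and $Z=q^{2n+2\delta-1-s}$. The binomial factor becomes $\binom{n}{I}_{q^{-2}}$; the Pochhammer symbol $(YX^{-i_1-1};X^{-1})_{n-i_1}$ equals $(q^{-2i_1-2\delta-1};q^{-2})_{n-i_1}$, agreeing with $(X^{2(i_1+\delta)+1};X^2)_{n-i_1}|_{X=q^{-1}}$ from Theorem~\ref{thmABC:thm C}. Using the elementary identity
\begin{equation*}
\binom{2n+\delta}{2}-\binom{2i+\delta}{2}=(n-i)(2n+2i+2\delta-1),
\end{equation*}
the geometric progression term $\gp{(X^iZ)^{n-i}}=\gp{q^{(n-i)(2n+2i+2\delta-1-s)}}$ matches $\gp{q^{a(F_{n,\delta},i)-(n-i)s}}$. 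Hence the left-hand side of~\eqref{equ:binomial B} becomes $\zeta_{F_{n,\delta}(\Gri_\mfp)}(s)$, and the right-hand side simplifies to $(q^{-2n}\cdot q^{1-2\delta}\cdot q^{2n+2\delta-1-s};q^2)_n/(q^{2n+2\delta-1-s};q^2)_n=(q^{-s};q^2)_n/(q^{2(n+\delta)-1-s};q^2)_n$, proving~\eqref{pro:Mult-formula-F}.

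In fact there is no substantive obstacle here: the proof is essentially bookkeeping, and the only non-trivial ingredient is Proposition~\ref{pro:multinomial} itself. The genuinely delicate case, which this proposition does not address, is the family of type~$H$ groups; the formula~\eqref{equ:mult H} does not follow from~\eqref{equ:binomial B} by a mere substitution, and will require the additional combinatorial arguments deferred to Section~\ref{subsec:thm B}.
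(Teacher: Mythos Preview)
Your proof is correct and follows exactly the same approach as the paper: identify $\zeta_{G_n(\Gri_\mfp)}(s)$ and $\zeta_{F_{n,\delta}(\Gri_\mfp)}(s)$ with the specialisations $\mathcal{B}_n(q,1,q^{n-s})$ and $\mathcal{B}_n(q^2,q^{1-2\delta},q^{2(n+\delta)-1-s})$ of the left-hand side of Proposition~\ref{pro:multinomial}, then read off the product formulae from the right-hand side. Your explicit verification of the substitutions (including the identity $\binom{2n+\delta}{2}-\binom{2i+\delta}{2}=(n-i)(2n+2i+2\delta-1)$) is more detailed than the paper's terse statement, but the argument is the same.
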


\begin{proof} By Theorem~\ref{thmABC:thm C}, we have
\begin{align*}
\zeta_{F_{n,\delta}(\Gri_\mfp)}(s)&=\sum_{I\subseteq[n-1]_{0}}\binom{n}{I}_{q^{-2}}(q^{-2(i_{1}+\delta)-1};q^{-2})_{n-i_{1}}\prod_{i\in
  I}\gp{q^{(2(n+i+\delta)-1-s)(n-i)}},\\ \zeta_{G_{n}(\Gri_\mfp)}(s) &=
\sum_{I\subseteq[n-1]_{0}}\binom{n}{I}_{q^{-1}}(q^{-i_1-1};q^{-1})_{n-i_1}\prod_{i\in
  I}\gp{q^{(n+i-s)(n-i)}}.
\end{align*}
Thus
$\zeta_{F_{n,\delta}(\Gri_\mfp)}(s)=\mathcal{B}_n(q^2,q^{-2\delta+1},q^{2(n+\delta)-1-s})$
and $\zeta_{G_{n}(\Gri_\mfp)}(s)=\mathcal{B}_n(q,1,q^{n-s})$, and the claim
follows from Proposition~\ref{pro:multinomial}.
\end{proof}

The rest of this section is dedicated to proving the formulae for the
zeta functions of groups of type $H$ given in~\eqref{equ:mult
  H}. Short of direct proof akin to the proof of
Proposition~\ref{pro:thm B type F and G}, we reduce type $H$ to
type~$F$; cf.\ Proposition~\ref{pro:H=F}. For this result we need some
preparation.

Recall that $n=2m+\eps\in\N$ with $\eps\in\{0,1\}$, that we write
$I=\{i_1,\dots,i_l\}_<\subseteq[n-1]_0$ and
$J=\{j_1,\dots,j_k\}\subseteq[m-1]_0$ for subsets of $[n-1]_0$ and
$[m-1]_0$, respectively, and the conventions that $i_0=0$
and~$i_{l+1}=n$. We set
\begin{equation*}
 f_{n,I} := f_{H_{n},I}(q^{-1}) :=
 \left(\prod_{j=1}^{l}(q^{-2};q^{-2})_{\lfloor
   (i_{j+1}-i_j)/2\rfloor}^{-1}\right)(q^{-i_1-1};q^{-1})_{n-i_1}\quad\textrm{
   for $I\subseteq[n-1]_0$,}
\end{equation*}
  and
\begin{equation}
  X_i := X_i(H_{n}):= q^{\binom{n+1}{2} -
  \binom{i+1}{2}-(n-i)s}\quad\text{ for }i\in[n-1]_0.\label{def:X_i}
\end{equation}
Given $I\subset[n-1]_0$ we write $\Pi_I$ for $\prod_{i\in
I}\gp{X_i(H_{n})}$.

Theorem~\ref{thmABC:thm C} represents the local factor
$\zeta_{H_{n}(\Gri_\mfp)}(s)$ as a sum, indexed by the subsets
of~$[n-1]_0$. It is advantageous to organise this sum according to the
fibres of the surjective `bisection map'~$\phi$, defined as
follows. We set
\begin{equation*}
\phi:2^{[n-1]_0}\longrightarrow 2^{[m-1]_0},\quad\quad
I\longmapsto \left\{ \lfloor \frac{i+1-\eps}{2}\rfloor \mid i\in I\right\} \setminus\{m\}.
\end{equation*}
We note that the `doubling map' $2^{[m-1]_0}\rightarrow 2^{[n-1]_0},
J\mapsto 2J+\eps$, is a section of this map, so each fibre
$\phi^{-1}(J)$ contains the set $2J+\eps\subseteq[n-1]_0$.  The proof
of Proposition~\ref{pro:H=F} hinges on the following technical
result. Its proof will occupy the bulk of this section.

\begin{lemma}\label{lem:fibres}
 For all $J\subseteq[m-1]_0$ we have
\begin{equation}\label{equ:lem fibres}
 \sum_{I\in\phi^{-1}(J)} f_{I} \Pi_I = \left( 1 +
(\udrl{n})\gp{X_{n-1}}\right)f_{ 2J+\eps} \Pi_{2J+\eps} =
\frac{1-q^{-s}}{1-q^{n-s}} f_{ 2J+\eps} \Pi_{2J+\eps} .
\end{equation}
\end{lemma}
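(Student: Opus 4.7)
The plan is to parametrize the fibre $\phi^{-1}(J)$ explicitly and then evaluate the resulting sum. Write $J = \{j_1 < \cdots < j_k\} \subseteq [m-1]_0$, and for $a \in [k]$ set $p_a^+ := 2j_a + \eps$ and $p_a^- := p_a^+ - 1$, so that $P_{j_a} = \{p_a^-, p_a^+\}$ (with $p_0^-$ suppressed when $\eps = 0$). An element $I \in \phi^{-1}(J)$ is then determined by (i) a non-empty subset $S_a \subseteq P_{j_a}$ for each $a$, which I label by a \emph{type} $t_a \in \{\mathrm{L,R,B}\}$ according to whether $I \cap P_{j_a} = \{p_a^-\}$, $\{p_a^+\}$ or $P_{j_a}$ respectively, and (ii) a Boolean $\eta \in \{0,1\}$ recording whether $n-1 \in I$. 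The baseline $(t_a = \mathrm{R}$ for all $a$, $\eta = 0)$ recovers $I = 2J+\eps$.

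For each such $I$ I would compute the multiplier $M(\vec t, \eta) := f_I \Pi_I / (f_{2J+\eps}\Pi_{2J+\eps})$ directly from the definitions. The $\Pi$-factor is immediate: including $p_a^-$ multiplies by $\gp{X_{p_a^-}}$, omitting $p_a^+$ divides by $\gp{X_{p_a^+}}$, and $\eta = 1$ multiplies by $\gp{X_{n-1}}$. The $f$-factor is controlled by tracking the gaps $\mu_b = i_{b+1} - i_b$: the inter-block gap between blocks $j_a$ and $j_{a+1}$ shrinks by $1$ precisely when $t_a \in \{\mathrm{R,B}\}$ and $t_{a+1} \in \{\mathrm{L,B}\}$, converting $(q^{-2};q^{-2})_{j_{a+1}-j_a}^{-1}$ into $(1-q^{-2(j_{a+1}-j_a)})(q^{-2};q^{-2})_{j_{a+1}-j_a}^{-1}$; the internal gap of length $1$ inside a type-$\mathrm{B}$ block contributes the trivial factor $(q^{-2};q^{-2})_0^{-1} = 1$; setting $\eta = 1$ absorbs the factor $1-q^{-2(m-j_k)}$ precisely when the topmost type is $\mathrm{R}$ or $\mathrm{B}$; and the boundary factor $(q^{-i_1-1};q^{-1})_{n-i_1}$ picks up an extra $1 - q^{-2j_1-\eps}$ exactly when $t_1 \in \{\mathrm{L,B}\}$.

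The identity then reduces to the universal assertion $\sum_{\vec t, \eta} M(\vec t, \eta) = 1 + (\underline n)\gp{X_{n-1}}$, which must hold independently of $J$. The base case $J = \emptyset$ is direct: $\phi^{-1}(\emptyset) = \{\emptyset, \{n-1\}\}$ gives precisely $1 + (1-q^{-n})\gp{X_{n-1}}$. For general $J$ the sum ranges over $3^{|J'|} \cdot 2$ terms (where $J'$ consists of those $a$ with $|P_{j_a}| = 2$) which combine through non-trivial cancellations; the main algebraic ingredients driving these cancellations are the $q$-shifted-factorial recurrence $(q^{-2};q^{-2})_{c-1}^{-1} = (1-q^{-2c})(q^{-2};q^{-2})_c^{-1}$ and the identity $1 + \gp{X} = (1-X)^{-1}$, which respectively absorb the gap-shift factors arising from type changes and collapse the $\eta$-sum inside each type stratum.

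The main obstacle is that $M$ does \emph{not} factor as a product of independent per-block contributions: the inter-block gaps couple neighbouring types, and $\eta$ is coupled to the topmost block's type through the right-end gap. In particular, a naive induction which sums out a single block does not produce a $J$-independent extension factor, as direct numerical inspection (for instance with $n=4$ and $J = \{1\}$) shows. I therefore expect the argument to proceed not by peeling off one block but by grouping terms in a more global way --- most likely organising the sum by the pair $(\eta, t_k)$ at the top together with a carefully chosen partition of the block-type vector --- so that the telescoping identities above can be applied uniformly. The verification that the resulting sum is precisely $1 + (\underline n)\gp{X_{n-1}}$, independently of $J$, is the technical heart of the argument.
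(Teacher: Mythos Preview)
Your proposal is not a proof but a setup followed by an acknowledgment that the hard part remains undone. More importantly, there is a genuine gap: you have not identified the one algebraic relation that makes the cancellation possible, namely
\[
X_{2j+\eps-1} \;=\; q^{-2(m-j)}\,X_{2j+\eps}\,X_{n-1},\qquad j\in\{1-\eps,\dots,m-1\},
\]
which follows directly from the definition $X_i = q^{\binom{n+1}{2}-\binom{i+1}{2}-(n-i)s}$. The identity you are trying to prove is \emph{false} if the $X_i$ are treated as independent indeterminates; it holds only modulo these relations. Your list of ``main algebraic ingredients'' (the $q$-shifted-factorial recurrence and $1+\gp{X}=(1-X)^{-1}$) is not enough: those identities never touch the variables $X_{p_a^-}$, so there is no mechanism by which the type-$\mathrm{L}$ and type-$\mathrm{B}$ contributions can be absorbed into an expression involving only $X_{2j_a+\eps}$ and $X_{n-1}$. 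The relation displayed above is precisely that mechanism, and its consequence
\[
\gp{X_{2j+\eps-1}}\Bigl(1+\gp{X_{2j+\eps}}+\gp{X_{n-1}}+(\ul{2(m-j)})\gp{X_{2j+\eps}}\gp{X_{n-1}}\Bigr)=q^{-2(m-j)}\gp{X_{2j+\eps}}\gp{X_{n-1}}
\]
is what drives the collapse.

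Your dismissal of induction is also mistaken. The paper's proof \emph{is} an induction on $|J|$: one partitions $\phi^{-1}(J)$ into three pieces according to the intersection $I\cap\{2j_1+\eps-1,\,2j_1+\eps\}$ at the \emph{bottom} block. Two of the three partial sums reduce immediately to the inductive hypothesis (with $n$ replaced by $2(m-j_1)$); the third requires a secondary induction of the same shape, peeling off the next block. Once the displayed relation is in hand, each recombination step is a two-line telescoping computation. Your numerical test with $n=4$, $J=\{1\}$ presumably failed because you were working with the $X_i$ as free variables rather than with their actual values; redo it using $X_1 = q^{-2}X_2X_3$ and the induction goes through.
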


Informally speaking, Lemma~\ref{lem:fibres} `eliminates' occurrences
of the terms $X_i$, where $i\in[n-2]_0\setminus (2[m-1]_0+\eps)$, and
simplifies the sum on the left hand side of~\eqref{equ:lem fibres}, which has roughly $3^{|J|}$ terms.

\begin{proof}
The second equation in~\eqref{equ:lem fibres} is clear. The proof of
the first equation requires slightly different arguments in the cases
$\eps=0$ and $\eps=1$. Nevertheless we treat both cases in
parallel. We start with an observation in the case $\eps=0$. Let
$J\subseteq[m-1]_0$. We note that, if $\eps=0$, we have $0\in J$ if
and only if $0\in I$ for all $I\in\phi^{-1}(J)$. We claim that we may,
without loss of generality, assume in this case that $0\not\in
J$. Indeed, if $0\in J$, we write $J=\{0\}\dotcup J'$, where
$J'=J\setminus\{0\}$, and similarly, for each $I\in\phi^{-1}(J)$, we
write $I=\{0\}\dotcup I'$, where $I'=I\setminus\{0\}$. Set
$j'_1:=\min\{J'\cup\{m\}\}$. It now suffices to observe that for all
$I\subseteq\phi^{-1}(J)$ (including the set $I=2J$) one has
$f_{I}=(q^{-1};q^{-2})_{j_1'}f_{I'}$. Therefore if \eqref{equ:lem
  fibres} holds for $J'$ it also holds for $J$. Indeed, we then have
\begin{align}\label{equ:0}
\sum_{I\in\phi^{-1}(J)} f_{I} \Pi_I &=
(q^{-1};q^{-2})_{j_1'}\;\gp{X_0}\sum_{I\in\phi^{-1}(J')} f_{I}
\Pi_I\nonumber\\&=(q^{-1};q^{-2})_{j_1'}\;\gp{X_0}\left(1 +
(\udrl{n})\gp{X_{n-1}}\right)f_{2J'}\Pi_{2J'}\nonumber\\
&=\left(1 + (\udrl{n})\gp{X_{n-1}}\right)f_{2J}\Pi_{2J}.
\end{align}
We thus assume henceforth that $0\not\in J$ if $\eps=0$.

We return to the general situation with $\eps\in\{0,1\}$. A key role
in the proof is played by the relations
\begin{equation}\label{relations}
X_{2j+\eps-1} = q^{-2(m-j)} X_{2j+\eps}X_{n-1}, \quad
j\in\{1-\eps,\dots,m-1\},\end{equation} which are immediate from the
definitions~\eqref{def:X_i}. The validity of Lemma~\ref{lem:fibres}
depends only on these relations, and not on the particular `numerical
data'~$(X_i)$. The first equation of~\eqref{equ:lem fibres} is thus
equivalent to an equality in the quotient of the ring
$\Z[q^{-1},X_1,\dots,X_{n-1}]$ by the ideal generated by the
relations~\eqref{relations}. The independence from the precise
numerical data $(X_i)$ is used in an inductive argument later in the
proof.

We prove Lemma~\ref{lem:fibres} by induction on $|J|$. We first
deal with the special case $J=\varnothing$, the base for our
induction. It is clear that
$\phi^{-1}(\varnothing)=\{\varnothing,\{n-1\}\}$ and easily checked
that $f_{\varnothing}=1$ and $f_{\{n-1\}}=(\udrl{n})$, so that
\begin{equation}\label{equ:fibre of empty}
\sum_{I\in \phi^{-1}(\varnothing)}f_{I} \Pi_I = f_{\varnothing} +
f_{\{n-1\}}\gp{X_{n-1}}=1+(\udrl{n})\gp{X_{n-1}}
\end{equation}
as claimed.

Assume now that $k=|J|\geq 1$ and write $J=\{j_1,\dots,j_k\}_<$. Note
that, by assumption, $j_1>0$ if~$\eps=0$. Our strategy is to split up
the fibre $\phi^{-1}(J)$ into three disjoint sets of equal size
$2\cdot 3^{k-1}$, according to the intersection
with~$T_1:=\{2j_1+\eps-1,2j_1+\eps\}$. We define
\begin{align*}
\calI_1 &= \{I\in\phi^{-1}(J) \mid I \cap T_1 = \{2j_1+\eps\}\},\\
\calI_2 &= \{I\in\phi^{-1}(J) \mid I \cap T_1 =
\{2j_1+\eps-1,2j_1+\eps\}\},\\ \calI_3 &=\{I\in\phi^{-1}(J) \mid I
\cap T_1 = \{2j_1+\eps-1\}\}.
\end{align*}
Hence $\phi^{-1}(J) = \calI_1 \dotcup \calI_2 \dotcup \calI_3$, as
$I\cap T_1\neq \varnothing$ for all~$I\in\phi^{-1}(J)$.  For
$r\in\{1,2,3\}$ we set $\calS_r = \sum_{I\in\calI_r}f_{I}\Pi_I$ so
that $\sum_{I\in\phi^{-1}(J)}f_I\Pi_I = \calS_1 + \calS_2 + \calS_3$.
We claim that
 \begin{align}
 \calS_1 &= f_{2J+\eps} \Pi_{2J+\eps} \left(1 +
 (\udrl{2(m-j_1)})\gp{X_{n-1}}\right),\label{equ:S1}\\ \calS_2 &=
 (\udrl{2j_1+\eps})\gp{X_{2j_1+\eps-1}}\calS_1,\label{equ:S2}\\ \calS_3
 &= f_{2J+\eps}(\udrl{2j_1+\eps}) \gp{X_{2j_1+\eps-1}}
 \Pi_{2(J\setminus\{j_1\})+\eps}\left(1 +
 \gp{X_{n-1}}\right).\label{equ:S3}
\end{align}
To prove~\eqref{equ:S1} we note that for all $I\in\calI_1$ we have
$$f_{{n},I} = \binom{n}{2j_1+\eps}_{q^{-1}}
f_{{2(m-j_1)},I-2j_1-\eps},$$ and hence
\begin{align*}
\calS_1 &= \binom{n}{2j_1+\eps}_{q^{-1}}\sum_{I\in\calI_1}f_{{2(m-j_1)},I-2j_1-\eps}\Pi_I\\
&=\binom{n}{2j_1+\eps}_{q^{-1}}
f_{{2(m-j_1)},2J-2j_1}\Pi_{2J+\eps}(1 + (\udrl{2(m-j_1)})\gp{X_{n-1}})\\
&=f_{2J+\eps} \Pi_{2J+\eps} \left(1 +
(\udrl{2(m-j_1)})\gp{X_{n-1}}\right).
\end{align*}
Here, the second equality uses the induction hypothesis for $J -
j_1\subseteq [m-j_1-1]_0$, $\eps=0$. This is justified as the terms
$X_i(H_{n})$ for $2j_1+\eps\leq i < n$ satisfy the same relations
given by \eqref{relations} as the terms $X_i(H_{2(m-j_1)})$ for $0\leq
i < 2(m-j_1)$, and because $|(J-j_1)\cap\N|<k$.

To prove~\eqref{equ:S2} it suffices to observe that
$f_{I}=(\udrl{2j_1+\eps})f_{I\setminus\{2j_1+\eps-1\}}$ for all $I\in
\calI_2$.

To prove~\eqref{equ:S3} we proceed by a second induction on $|J|=k$,
the induction base $k=1$ being a straightforward computation which we
leave to the reader. If $k>1$ we partition the set $\calI_3$. We let
$T_2=\{2j_2+\eps-1,2j_2+\eps\}$ and define
\begin{align*}
\calI_{3,1} &= \{I\in\calI_3 \mid I\cap T_2 = \{2j_2+\eps-1\}\},\\
\calI_{3,2} &= \{I\in\calI_3 \mid I\cap T_2 = \{2j_2+\eps-1,2j_2+\eps\}\},\\
\calI_{3,3} &=\{I\in\calI_3 \mid I\cap T_2 = \{2j_2+\eps\}\}.
\end{align*}
Note that $I\cap T_2\neq\varnothing$ for all $I\in\mathcal{I}_3$.  For
$r\in\{1,2,3\}$ we set $\calS_{3,r} =
\sum_{I\in\calI_{3,r}}f_{I}\Pi_I$ so that $\calS_3 = \calS_{3,1} +
\calS_{3,2} + \calS_{3,3}$. We claim that
\begin{align}
\calS_{3,1} &= f_{2J+\eps} (\udrl{2j_1+\eps}) \gp{X_{2j_1+\eps-1}}
\Pi_{2(J\setminus\{j_1\})+\eps} \left( 1 + (\udrl{2(m-j_2)})\gp{X_{n-1}}
\right),\label{equ:S31}\\ \calS_{3,2} &= \gp{X_{2j_2+\eps-1}}
\calS_{3,1},\label{equ:S32} \\ \calS_{3,3} &= f_{2J+\eps}
(\udrl{2j_1+\eps}) \gp{X_{2j_1+\eps-1}} \,\gp{X_{2j_2+\eps-1}}
\Pi_{2(J\setminus\{j_1,j_2\})+\eps} \left( 1 + \gp{X_{n-1}}
\right).\label{equ:S33}
\end{align}

To prove \eqref{equ:S31} we observe that for all $I\in\calI_{3,1}$ we
have
\begin{align}\label{f_I eps=0}
f_{I} &=
f_{I\setminus\{2j_1-1\}}(q^{-2j_1-1};q^{-2})_{j_2-j_1}(\udrl{2{j_1}})\binom{j_2}{j_1}_{q^{-2}}\quad\text{
if $\eps=0$},\\ f_{I} &=
f_{I\setminus\{2j_1\}}(q^{-2j_1-1};q^{-2})_{j_2-j_1}(\udrl{2{j_2}+1})\binom{j_2}{j_1}_{q^{-2}}\quad\text{
if $\eps=1$}.\label{f_I eps=1}
\end{align}
Furthermore, for all $J\subseteq[m-1]_0$ we have
\begin{align}f_{2J} &= f_{2J\setminus
  2\{j_1\}}(q^{-2j_1-1};q^{-2})_{j_2-j_1}\binom{j_2}{j_1}_{q^{-2}},\label{equ:f eps=0}\\
f_{2J+1} &= f_{2(J\setminus
  2\{j_1\})+1}(q^{-2j_1-1};q^{-2})_{j_2-j_1}\binom{j_2}{j_1}_{q^{-2}}\frac{(\udrl{2j_2+1})}{(\udrl{2j_1+1})},\label{equ:f eps=1}
\end{align}
so that if $\eps=0$ we have, using \eqref{f_I eps=0}, \eqref{equ:S1}
for $J\setminus\{j_1\}$, and \eqref{equ:f eps=0},
\begin{align*}
\calS_{3,1} &=
(q^{-2j_1-1};q^{-2})_{j_2-j_1}(\udrl{2{j_1}})\binom{j_2}{j_1}_{q^{-2}}\gp{X_{2j_1-1}}\sum_{I\in\calI_{3,1}}f_{I\setminus\{2j_1-1\}}
\Pi_{I\setminus\{2j_1-1\}}\\ &=(q^{-2j_1-1};q^{-2})_{j_2-j_1}(\udrl{2{j_1}})\binom{j_2}{j_1}_{q^{-2}}\gp{X_{2j_1-1}}f_{2J\setminus
  2\{j_1\}}\Pi_{2(J\setminus\{j_1\})}\cdot\\&\quad\left(1 +
(\udrl{2(m-j_2)})\gp{X_{n-1}}\right)\\ &=f_{2J} (\udrl{2j_1})
\gp{X_{2j_1-1}} \Pi_{2(J\setminus\{j_1\})} \left( 1 +
(\udrl{2(m-j_2)})\gp{X_{n-1}} \right),
\end{align*}
as claimed. If $\eps=1$ we have, using~\eqref{f_I eps=1},
\eqref{equ:S1} for $J\setminus\{j_1\}$, and~\eqref{equ:f eps=1},
\begin{align*}
\calS_{3,1} &=
(q^{-2j_1-1};q^{-2})_{j_2-j_1}(\udrl{2{j_2}+1})\binom{j_2}{j_1}_{q^{-2}}\gp{X_{2j_1}}\sum_{I\in\calI_{3,1}}f_{I\setminus\{2j_1\}}
\Pi_{I\setminus\{2j_1\}}\\ &=(q^{-2j_1-1};q^{-2})_{j_2-j_1}(\udrl{2{j_2}+1})\binom{j_2}{j_1}_{q^{-2}}\gp{X_{2j_1}}f_{2J\setminus
  2\{j_1\}+1}\Pi_{2(J\setminus\{j_1\})+1}\cdot\\&\left(1 +
(\udrl{2(m-j_2)})\gp{X_{n-1}}\right)\\ &=f_{2J+1} (\udrl{2j_1+1})
\gp{X_{2j_1}} \Pi_{2(J\setminus\{j_1\})+1} \left( 1 +
(\udrl{2(m-j_2)})\gp{X_{n-1}} \right),
\end{align*}
as claimed. This establishes~\eqref{equ:S31}.

To prove \eqref{equ:S32} it suffices to observe that for all
$I\in\calI_{3,2}$ we have $f_{I}=f_{I\setminus\{2j_2+\eps-1\}}$.

To prove \eqref{equ:S33} we note that for all $I\in\calI_{3,3}$ we
have

\begin{align}\label{I33 f_I eps=0}
f_{I} &= f_{I\setminus\{2j_1-1\}}
(q^{-2j_1-1};q^{-2})_{j_2-j_1}\binom{j_2}{j_1}_{q^{-2}}\frac{\left(\udrl{2j_1}\right)}{\left(\udrl{2j_2}\right)}\quad\text{
if $\eps=0$},\\
\label{I33 f_I eps=1}
f_{I} &= f_{I\setminus\{2j_1\}}
(q^{-2j_1-1};q^{-2})_{j_2-j_1}\binom{j_2}{j_1}_{q^{-2}}\quad\text{ if $\eps=1$},
\end{align}

Hence, if $\eps=0$ we have, using \eqref{I33 f_I eps=0}, the second
induction hypothesis for the formula~\eqref{equ:S3} for $\calS_{3}$
(for $J\setminus\{j_1\}$, $\eps=0$), and~\eqref{equ:f eps=0},
\begin{align*}
 \calS_{3,3}&=(q^{-2j_1-1};q^{-2})_{j_2-j_1}\binom{j_2}{j_1}_{q^{-2}}\frac{(\udrl{2j_1})}{(\udrl{2j_2})}\gp{X_{2j_1-1}}\sum_{I\in\calI_{3,3}}f_{I\setminus\{2j_1-1\}}\Pi_{I\setminus\{2j_1-1\}}\\
&=(q^{-2j_1-1};q^{-2})_{j_2-j_1}\binom{j_2}{j_1}_{q^{-2}}\frac{(\udrl{2j_1})}{(\udrl{2j_2})}\cdot\\&\quad\gp{X_{2j_1-1}}f_{2J\setminus2\{j_1\}}(\udrl{2j_2})\gp{X_{2j_2-1}}\Pi_{2(J\setminus\{j_1,j_2\})}\left(1+\gp{X_{n-1}}\right)\\
&= f_{2J} (\udrl{2j_1}) \gp{X_{2j_1-1}} \,\gp{X_{2j_2-1}}
\Pi_{2(J\setminus\{j_1,j_2\})} \left( 1 + \gp{X_{n-1}} \right),
\end{align*}
as claimed. If $\eps=1$ we have, using~\eqref{I33 f_I eps=1}, the
second induction hypothesis for the formula~\eqref{equ:S3} for
$\calS_3$ (for $J\setminus\{j_1\}$, $\eps=1$), and~\eqref{equ:f
eps=1},
\begin{align*}
 \calS_{3,3}&=(q^{-2j_1-1};q^{-2})_{j_2-j_1}\binom{j_2}{j_1}_{q^{-2}}\gp{X_{2j_1}}\sum_{I\in\calI_{3,3}}f_{I\setminus\{2j_1\}}\Pi_{I\setminus\{2j_1\}}\\
&=(q^{-2j_1-1};q^{-2})_{j_2-j_1}\binom{j_2}{j_1}_{q^{-2}}\gp{X_{2j_1}}\cdot\\&
\quad f_{2(J\setminus\{j_1\})+1}(\udrl{2j_2+1})\gp{X_{2j_2}}\Pi_{2(J\setminus\{j_1,j_2\})+1}\left(1+\gp{X_{n-1}}\right)\\
&= f_{2J+1} (\udrl{2j_1+1}) \gp{X_{2j_1}} \,\gp{X_{2j_2}}
\Pi_{2(J\setminus\{j_1,j_2\})+1} \left( 1 + \gp{X_{n-1}} \right),
\end{align*}
as claimed. This establishes~\eqref{equ:S33}.

It remains to simplify the sums $\calS_{31}+\calS_{32}+\calS_{33}$ and
$\calS =\calS_{1}+\calS_{2}+\calS_{3}$. For both calculations we use
the following direct consequence of the relations~\eqref{relations}:
for all $j\in\{1+\eps,\dots,m-1\}$ we have
\begin{multline}\label{relations_cons}
\gp{X_{2j+\eps-1}} \left( 1 + \gp{X_{2j+\eps}} + \gp{X_{n-1}} +
(\udrl{2(m-j)})\gp{X_{2j+\eps}}\gp{X_{n-1}}\right) =\\
q^{-2(m-j)}\gp{X_{2j+\eps}}\gp{X_{n-1}}.
\end{multline}
Thus, using \eqref{relations_cons} for~$j=j_2$, we have
\begin{align*}
\lefteqn{\calS_3 = \calS_{3,1} + \calS_{3,2} + \calS_{3,3}} \\&= f_{2J+\eps}
(\udrl{2j_1+\eps}) \gp{X_{2j_1+\eps-1}}
\Pi_{2(J\setminus\{j_1,j_2\})+\eps} \left(\gp{X_{2j_2+\eps}}( 1 +
(\udrl{2(m-j_2)})\gp{X_{n-1}} ) + \right.\\ &\quad \left.\gp{X_{2j_2+\eps-1}} \left(
1 + \gp{X_{2j_2+\eps}} + \gp{X_{n-1}}+
(\udrl{2(m-j_2)})\gp{X_{2j_2+\eps}}\gp{X_{n-1}}\right)\right)\\ &=
f_{2J+\eps} (\udrl{2j_1+\eps}) \gp{X_{2j_1+\eps-1}}
\Pi_{2(J\setminus\{j_1\})+\eps}\cdot\\&\quad\left(1 +
(\udrl{2(m-j_2)})\gp{X_{n-1}} + q^{-2(m-j_2)}\gp{X_{n-1}}\right)\\ &=
f_{2J+\eps} (\udrl{2j_1+\eps}) \gp{X_{2j_1+\eps-1}}
\Pi_{2(J\setminus\{j_1\})+\eps}\left( 1 + \gp{X_{n-1}}\right)
\end{align*}
as claimed in~\eqref{equ:S3}.

It remains to simplify the sum $\calS_1 + \calS_2 + \calS_3$. Using
\eqref{relations_cons} for $j=j_1$, we obtain
\begin{align*}
\lefteqn{\calS = \calS_1 + \calS_2 + \calS_3 = f_{2J+\eps} \Pi_{2(J\setminus\{j_1\})+\eps}
\left( \gp{X_{2j_1+\eps}}( 1 + (\udrl{2(m-j_1)})\gp{X_{n-1}}) +
\right.}\\& \quad\left.(\udrl{2j_1+\eps})\gp{X_{2j_1+\eps-1}}\left( 1 +
\gp{X_{2j_1+\eps}} + \gp{X_{n-1}} + \right.\right.\\&\quad\left.\left.
(\udrl{2(m-j_1)})\gp{X_{2j_1+\eps}}\gp{X_{n-1}}\right)\right)\\ &=f_{2J+\eps}\Pi_{2J+\eps}\left(1 + (\udrl{2(m-j_1)})\gp{X_{n-1}} + (\udrl{2j_1+\eps})q^{-2(m-j_1)}\gp{X_{n-1}}\right)
\\&= f_{2J+\eps}\Pi_{2J+\eps}\left( 1 + (\udrl{n})
\gp{X_{n-1}}\right).
\end{align*}
This concludes the proof of the lemma.
\end{proof}

Given the Euler product~\eqref{equ:euler}, the multiplicative formulae
\eqref{equ:mult H} for $\zeta_{H_{n}(\Gri_\mfp)}(s)$ given in
Theorem~\ref{thmABC:thm B} now follow easily from the following
result.

\begin{proposition}\label{pro:H=F}
For $n=2m+\eps\in\N$ with $\eps\in\{0,1\}$, we have
\begin{equation*}
 \zeta_{H_{n}(\Gri_\mfp)}(s) =
 \frac{\zeta_{K,\mfp}(s-n)}{\zeta_{K,\mfp}(s)}
 \zeta_{F_{m,\eps}(\Gri_\mfp)}(2s-2) =
 \frac{1-q^{-s}}{1-q^{n-s}}\cdot\frac{(q^{2-2s};q^2)_m}{(q^{2(m+\eps)+1-2s};q^2)_m}.
\end{equation*}
\end{proposition}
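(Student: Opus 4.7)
The plan is to combine the additive formula of Theorem~\ref{thmABC:thm C} for $H_n$ with the just-proved Lemma~\ref{lem:fibres} in order to reduce $\zeta_{H_n(\Gri_\mfp)}(s)$ to $\zeta_{F_{m,\eps}(\Gri_\mfp)}(2s-2)$, and then invoke Proposition~\ref{pro:thm B type F and G} for the closed form.

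First I would expand $\zeta_{H_n(\Gri_\mfp)}(s) = \sum_{I\subseteq[n-1]_0} f_{H_n,I}(q^{-1})\,\Pi_I$ by Theorem~\ref{thmABC:thm C}, partition the indexing set $2^{[n-1]_0}$ along the fibres of the bisection map $\phi\colon 2^{[n-1]_0}\to 2^{[m-1]_0}$, and apply Lemma~\ref{lem:fibres} to each inner sum. The fibre over $J$ collapses to $\frac{1-q^{-s}}{1-q^{n-s}}\,f_{H_n,2J+\eps}(q^{-1})\,\Pi_{2J+\eps}$, yielding
\[
\zeta_{H_n(\Gri_\mfp)}(s) = \frac{1-q^{-s}}{1-q^{n-s}}\sum_{J\subseteq[m-1]_0} f_{H_n,2J+\eps}(q^{-1})\,\Pi_{2J+\eps}.
\]

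Next I would identify this sum, term by term in $J$, with the Theorem~\ref{thmABC:thm C} expansion of $\zeta_{F_{m,\eps}(\Gri_\mfp)}(2s-2)$. Two routine verifications are required. The exponent match $X_{2j+\eps}(H_n) = q^{a(F_{m,\eps},j)-(m-j)(2s-2)}$ follows from the elementary identity $\binom{n+1}{2}-\binom{2j+\eps+1}{2} = \binom{2m+\eps}{2}-\binom{2j+\eps}{2}+2(m-j)$. The polynomial match $f_{H_n,2J+\eps}(X) = f_{F_{m,\eps},J}(X)$ reduces, after cancelling the telescoping $X^2$-binomial products inside $f_{F_{m,\eps},J}(X)$, to the factorisation
\[
(X^{2j_1+\eps+1};X)_{2(m-j_1)} = \frac{(X^2;X^2)_m}{(X^2;X^2)_{j_1}}\,(X^{2j_1+2\eps+1};X^2)_{m-j_1},
\]
obtained by sorting the factors $1-X^a$ on the left according to the parity of~$a$.

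Combining these observations shows that the sum equals $\zeta_{F_{m,\eps}(\Gri_\mfp)}(2s-2)$. The first equality of the proposition then follows upon recognising $(1-q^{-s})/(1-q^{n-s})$ as $\zeta_{K,\mfp}(s-n)/\zeta_{K,\mfp}(s)$, and the second is immediate from Proposition~\ref{pro:thm B type F and G} evaluated at $2s-2$. The main subtlety is the boundary case $\eps=0$ with $0\in J$, where $i_1=0$ and one of the $(X^2;X^2)^{-1}_{\lfloor\mu_j/2\rfloor}$ factors is absent from $f_{H_n,2J}$; in that situation the identification relies instead on the splitting $(X;X)_{2m}=(X^2;X^2)_m\,(X;X^2)_m$, which supplies exactly the missing symmetric factors on the $F_{m,0}$ side.
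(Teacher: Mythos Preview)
Your proposal is correct and follows essentially the same route as the paper's own proof: expand $\zeta_{H_n(\Gri_\mfp)}(s)$ via Theorem~\ref{thmABC:thm C}, split the sum along the fibres of the bisection map~$\phi$, collapse each fibre using Lemma~\ref{lem:fibres}, and then identify the resulting sum over $J\subseteq[m-1]_0$ term by term with the Theorem~\ref{thmABC:thm C} expansion of $\zeta_{F_{m,\eps}(\Gri_\mfp)}(2s-2)$ via the two matches $f_{H_n,2J+\eps}=f_{F_{m,\eps},J}$ and $X_{2j+\eps}(H_n)=X_j(F_{m,\eps})\vert_{s\to 2s-2}$; the paper merely asserts these two identities, whereas you spell out the verification (the parity splitting of the Pochhammer symbol and the binomial identity $\binom{n+1}{2}-\binom{2j+\eps+1}{2}=\binom{2m+\eps}{2}-\binom{2j+\eps}{2}+2(m-j)$), which is fine. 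One small remark: your closing comment about the ``boundary case'' $\eps=0$, $0\in J$ is unnecessary caution---the factorisation you wrote, $(X^{2j_1+\eps+1};X)_{2(m-j_1)}=\frac{(X^2;X^2)_m}{(X^2;X^2)_{j_1}}(X^{2j_1+2\eps+1};X^2)_{m-j_1}$, already specialises correctly to $(X;X)_{2m}=(X^2;X^2)_m(X;X^2)_m$ at $j_1=0$, so no separate argument is needed.
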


\begin{proof} The second equality is, of course,
Proposition~\ref{pro:Mult-formula-F}, with $(n,\delta)$ replaced
by~$(m,\eps)$.  We prove the first equality. By
Theorem~\ref{thmABC:thm C}, with $(n,\delta)$ replaced by $(m,\eps)$,
we have
$$\zeta_{F_{m,\eps}(\Gri_\mfp)}(s) = \sum_{J\subseteq[m-1]_0}
f_{F_{m,\eps},J}(q^{-1}) \prod_{j\in J}\gp{X_j(F_{m,\eps})},$$ where
$X_j(F_{m,\eps})= q^{\binom{2m+\eps}{2} - \binom{2j+\eps}{2}-(n-j)s}$.
It follows from Definition~\ref{def:polys thm C} that, for all
$J\subseteq[m-1]_0$,
\begin{equation*}
f_{H_{n},2J+\eps}(q^{-1}) =
\binom{m}{J}_{q^{-2}}(q^{-2(j_1+\eps)-1};q^{-2})_{m-j_1}=
f_{F_{m,\eps},J}(q^{-1})
\end{equation*}
and, for all $j\in[m-1]_0$,
\begin{align*}
X_j(F_{m,\eps})\vert_{s\rightarrow 2s-2} &= q^{\binom{2m+\eps}{2}-\binom{2j+\eps}{2}-(2s-2)(m-j)}\\
&=q^{\binom{2m+\eps}{2}+2m+\eps - \left(\binom{2j+\eps}{2}+2j+\eps\right)-2s(m-j)}\\
&=q^{\binom{2m+1+\eps}{2} - \binom{2j+1+\eps}{2}-2s(m-j)}\\
&=X_{2j+\eps}(H_{n}).
\end{align*}
We thus have, by Theorem~\ref{thmABC:thm C} and
Lemma~\ref{lem:fibres},
\begin{align*}
 \zeta_{H_{n}(\Gri_\mfp)}(s)
 &=\sum_{I\subseteq[n-1]_0}f_{H_{n},I}(q^{-1})\prod_{i\in
 I}\gp{X_i(H_{n})}\\ &=
 \sum_{J\subseteq[m-1]_0}\sum_{I\in\phi^{-1}(J)}f_{H_{n},I}(q^{-1})\prod_{i\in
 I}\gp{X_i(H_{n})}\\ &=
 (1+(\udrl{n})\gp{X_{n-1}(H_{n})})\sum_{J\subseteq[m-1]_0}f_{H_{n,2J+\eps}}(q^{-1})\prod_{j\in
 J}\gp{X_{2j+\eps}(H_{n})}\\ &=
 \frac{1-q^{-s}}{1-q^{n-s}}\sum_{J\subseteq[m-1]_0}
 f_{F_{m,\eps},J}(q^{-1})\prod_{j\in
 J}\gp{X_j(F_{m,\eps})\vert_{s\rightarrow 2s-2}}\\ &=
 \zeta_{K,\mfp}(s-n)\zeta_{K,\mfp}(s)^{-1}
 \zeta_{F_{m,\eps}(\Gri_\mfp)}(2s-2).
\end{align*}
This proves the proposition.
\end{proof}
This concludes the proof of Theorem~\ref{thmABC:thm B}.

\begin{corollary}\label{cor:TypeH-identity}
 For $n=2m+\eps\in\N$ with $\eps\in\{0,1\}$, we have
 \begin{align*}
\sum_{I\subseteq[n-1]_{0}}\left(\prod_{j=1}^{l}(X^{-4};X^{-4})_{\lfloor\mu_{j}/2\rfloor}^{-1}\right)(X^{-2(i_{1}+1)};X^{-2})_{n-i_{1}}\prod_{i\in
I}\mathrm{gp(}(X^{i}Z)^{n-i})\\
=\frac{1-X^{-n-1}Z}{1-X^{n-1}Z}\cdot\frac{(X^{2(-n+1)}Z^{2};X^{4})_{m}}{(X^{\epsilon}Z^{2};X^{4})_{m}}.\nonumber
\end{align*}
\end{corollary}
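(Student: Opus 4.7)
The plan is to view the identity as a formal reformulation of the equality between the two expressions for the local representation zeta function $\zeta_{H_{n}(\Gri_\mfp)}(s)$ provided by Theorems~\ref{thmABC:thm C} and~\ref{thmABC:thm B} (equivalently, by Proposition~\ref{pro:H=F}). The key device is the substitution
\[
q \longmapsto X^{2}, \qquad q^{-s} \longmapsto X^{-n-1}Z,
\]
under which one passes from zeta functions in the variable $q^{-s}$ to rational functions in the formal variables $X,Z$.

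First I would check that, under this substitution, the left-hand side of the corollary coincides with the expression for $\zeta_{H_{n}(\Gri_\mfp)}(s)$ given by Theorem~\ref{thmABC:thm C}. For the combinatorial factor, $q^{-2}=X^{-4}$ and $q^{-1}=X^{-2}$ give $f_{H_{n},I}(q^{-1})=\bigl(\prod_{j=1}^{l}(X^{-4};X^{-4})_{\lfloor\mu_{j}/2\rfloor}^{-1}\bigr)(X^{-2(i_{1}+1)};X^{-2})_{n-i_{1}}$. For the exponents, writing $\binom{n+1}{2}-\binom{i+1}{2}=\tfrac{(n-i)(n+i+1)}{2}$, one computes
\[
q^{\binom{n+1}{2}-\binom{i+1}{2}-(n-i)s}=\bigl(X^{n+i+1}q^{-s}\bigr)^{n-i}=\bigl(X^{n+i+1}\cdot X^{-n-1}Z\bigr)^{n-i}=(X^{i}Z)^{n-i},
\]
so that the term $\gp{(X^{i}Z)^{n-i}}$ is exactly $\gp{X_{i}(H_{n})}$ after substitution. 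Thus the left-hand side equals the image of $\zeta_{H_{n}(\Gri_\mfp)}(s)$ under $q\mapsto X^{2},\ q^{-s}\mapsto X^{-n-1}Z$.

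Next I would apply Proposition~\ref{pro:H=F}, which gives the closed form
\[
\zeta_{H_{n}(\Gri_\mfp)}(s)=\frac{1-q^{-s}}{1-q^{n-s}}\cdot\frac{(q^{2-2s};q^{2})_{m}}{(q^{2(m+\eps)+1-2s};q^{2})_{m}},
\]
and apply the same substitution. The prefactor becomes $(1-X^{-n-1}Z)/(1-X^{n-1}Z)$. For the Pochhammer factors, $q^{2-2s}=X^{4}\cdot(q^{-s})^{2}=X^{4-2n-2}Z^{2}=X^{2(-n+1)}Z^{2}$ and $q^{2}=X^{4}$, yielding $(X^{2(-n+1)}Z^{2};X^{4})_{m}$; similarly, using $n=2m+\eps$, the denominator becomes $(X^{2\eps}Z^{2};X^{4})_{m}$, matching the right-hand side of the corollary.

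Since both sides of the asserted identity are rational functions in the two formal variables $X,Z$, and since the two forms of $\zeta_{H_{n}(\Gri_\mfp)}(s)$ agree for every prime power $q$ and every $s\in\C$, the specializations $X=q^{1/2}$, $Z=q^{(n+1)/2-s}$ form a Zariski-dense set in $(\C^{\times})^{2}$, and a standard density argument promotes the numerical equalities to an identity of rational functions in $\Q(X,Z)$. There is no serious obstacle: the content is the bookkeeping of exponents confirming that the two substitutions line up; the identity itself is the combinatorial shadow of the fact, already established in Section~\ref{subsec:thm B}, that the additive expression of Theorem~\ref{thmABC:thm C} and the multiplicative expression of Theorem~\ref{thmABC:thm B} for type $H$ coincide.
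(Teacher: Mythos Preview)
Your proof is correct and follows exactly the paper's own approach: the paper simply records that under the substitution $X=q^{1/2}$, $Z=q^{(n+1)/2-s}$ the identity reduces to Theorem~\ref{thmABC:thm C} combined with Proposition~\ref{pro:H=F}, and then invokes the same density argument you give. Your more explicit bookkeeping in fact shows that the denominator on the right-hand side should read $(X^{2\eps}Z^{2};X^{4})_{m}$ rather than $(X^{\eps}Z^{2};X^{4})_{m}$ as printed (the two differ only when $\eps=1$).
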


\begin{proof}
For $X=q^{1/2}$ and $Z=q^{(n+1)/2-s}$ the identity follows from
Theorem~\ref{thmABC:thm C} together with Proposition~\ref{pro:H=F}. As
it holds for infinitely many values of $q$ and $s$, it therefore holds
as a formal identity.

\end{proof}

Corollary~\ref{cor:TypeH-identity} is analogous to
Proposition~\ref{pro:multinomial}. The identity was found by working
backwards from Proposition~\ref{pro:H=F} and Theorem~\ref{thmABC:thm
C}. An independent proof of Corollary~\ref{cor:TypeH-identity} would
yield an alternative proof of Proposition~\ref{pro:H=F}, similar to
the proof of Proposition~\ref{pro:thm B type F and G}.
%
%

\subsection{Another generating function on $B_n$}\label{subsec:distribution L}
We record a corollary of the existence of both `additive' and
`multiplicative' expressions for the local zeta functions of groups of
type $H$, together with Conjecture~\ref{con:L}. Recall that
$n=2m+\epsilon\in\N$, with $\epsilon\in\{0,1\}$.
\begin{proposition}
\label{pro:distribution L}
If Conjecture~\ref{con:L} holds then
 \begin{multline}\label{equ:conjecture}
 \sum_{w\in B_{n}}(-1)^{l(w)}X^{((\sigma+\rmaj)/2-L)(w)}Z^{\rmaj(w)}
   =\\\frac{(1-Z)(X^2Z^2;X^2)_m}{(X^{2(m+\eps)+1}Z^2;X^2)_m}\prod_{i=0}^{n-2}(1-(X^{(n+i+1)/2}Z)^{n-i}).
 \end{multline}
 \end{proposition}

\begin{proof}
Let $\mfp$ be a non-zero prime ideal $\mfp$ of $\Gri$, with
$|\Gri:\mfp|=q$, say. By Theorem~\ref{thmABC:thm C} and
Proposition~\ref{pro:H=F} we have
\begin{multline*}
\zeta_{H_{n}(\Gri_\mfp)}(s)
=\sum_{I\subseteq[n-1]_{0}}f_{H_{n},I}(q^{-1})\prod_{i\in
I}\gp{q^{\binom{n+1}{2}-\binom{i+1}{2}-(n-i)s}}
=\\\frac{(1-q^{-s})}{(1-q^{n-s})}\frac{(q^{2-2s};q^2)_m}{(q^{2(m+\eps)+1-2s};q^2)_m},
\end{multline*}
and, assuming Conjecture~\ref{con:L}, we have\[
f_{H_{n},I}(q^{-1})=\sum_{w\in B_{n}^{I^{c}}}(-1)^{l(w)}q^{-L(w)}.\]
Therefore Lemma~\ref{lem:Common-denominator} with
$(W,S)=(B_{n},\{s_0,\dots,s_{n-1}\})$  implies that
\[
\zeta_{H_{n}(\Gri_\mfp)}(s)=\frac{\sum_{w\in
    B_{n}}(-1)^{l(w)}q^{-L(w)}\prod_{i\in
    D(w)}q^{((n+i+1)/2-s)(n-i)}}{\prod_{i=0}^{n-1}(1-q^{((n+i+1)/2-s)(n-i)})},
\]
and so
\begin{align*}
 \lefteqn{\sum_{w\in
  B_{n}}(-1)^{l(w)}q^{((\sigma+\rmaj)/2-L)(w)}(q^{-s})^{\rmaj(w)}}&\\
  & =\sum_{w\in B_{n}}(-1)^{l(w)}q^{-L(w)+\sum_{i\in
  D(w)}((n+i)(n-i)+n-i)/2}(q^{-s})^{\sum_{i\in D(w)}(n-i)}\\ & =\sum_{w\in
  B_{n}}(-1)^{l(w)}q^{-L(w)}\prod_{i\in D(w)}q^{((n+i+1)/2-s)(n-i)}\\
  &
  =\frac{1-q^{-s}}{1-q^{n-s}}\cdot\frac{(q^{2-2s};q^2)_m}{(q^{2(m+\eps)+1-2s};q^2)_m}\cdot\prod_{i=0}^{n-1}(1-q^{((n+i+1)/2-s)(n-i)})\\
  &=
  \frac{(1-q^{-s})(q^{2-2s};q^2)_m}{(q^{2(m+\eps)+1-2s};q^2)_m}\prod_{i=0}^{n-2}(1-q^{((n+i+1)/2-s)(n-i)}).
\end{align*}
This identity holds for infinitely many values of $q$ and $s$, and
hence yields a formal identity in variables $X=q$ and $Z=q^{-s}$.

\end{proof}

\subsection{Proof of Corollary~\ref{cor:thm B}}\label{subsec:proof cor thm B}
The functional equations in~\eqref{item:funeqs} follow directly from
the formulae given in Theorem~\ref{thmABC:thm B}, and the Euler
product for the Dedekind zeta function~$\zeta_K(s)$;
cf.~\eqref{equ:dedekind}. The abscissae of convergence
in~\eqref{item:alphas} and the analytical statements
in~\eqref{item:analytic} reflect classical facts about $\zeta_K(s)$,
viz.\ its abscissa of convergence $1$ and meromorphic continuation to
the whole complex plane with a simple pole at~$s=1$. The asymptotic
statements in~\eqref{item:asymptotic} follow from standard Tauberian
theorems; cf., for instance, \cite[Theorem 4.20]{duSG/00}.

\subsection{Jordan's totient functions}\label{subsec:jordan}
We record an interpretation of the zeta functions of groups of type
$F$, $G$ and $H$, described in Theorem~\ref{thmABC:thm B}, in terms of
Jordan's totient functions. Given $b, n\in\N$, let $J_b(n)$ be the
number of $b$-tuples $(a_1,a_2,\dots,a_b)$ of integers satisfying
$1\leq a_i \leq n$, for all $i$, such that
$\gcd(a_1,\dots,a_b,n)=1$. The function $J_b$ is called the $b$-th
Jordan totient function; cf.~\cite[1.5.2]{Murty/08}. Clearly
$J_1=\phi$, the Euler totient function.

\begin{lemma}\label{lem:jordan}
 Let $a\in\N_0$, $b\in\N$. The Dirichlet generating series for the
 arithmetic function $J_{a,b}:\N \rightarrow \N, n\mapsto n^a
 J_b(n)$ is $\sum_{n=1}^\infty J_{a,b}(n)n^{-s} = \zeta(s-a-b) /
 \zeta(s-a),$ where $\zeta(s)$ is the Riemann zeta function.
\end{lemma}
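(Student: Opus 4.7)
The plan is to reduce the claim to the classical case $a=0$ by a trivial shift, and then to handle that case via Möbius inversion applied to the standard convolution identity $\sum_{d\mid n}J_b(d)=n^b$.

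First, I would note that for any arithmetic function $f:\N\to\C$ with Dirichlet series $D_f(s)=\sum_n f(n)n^{-s}$, the function $n\mapsto n^a f(n)$ has Dirichlet series $D_f(s-a)$. Applied to $f=J_b$, this reduces the lemma to the identity
\begin{equation*}
\sum_{n=1}^\infty J_b(n)n^{-s}=\frac{\zeta(s-b)}{\zeta(s)}.
\end{equation*}

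Next I would establish the convolution identity $\sum_{d\mid n}J_b(d)=n^b$. The quickest route is to partition the $b$-tuples $(a_1,\dots,a_b)\in[n]^b$ according to $d:=n/\gcd(a_1,\dots,a_b,n)$, which is a divisor of $n$: the $b$-tuples with a given value of $d$ are exactly those of the form $(n/d)(a_1',\dots,a_b')$ with $(a_1',\dots,a_b')\in[d]^b$ and $\gcd(a_1',\dots,a_b',d)=1$, giving $J_b(d)$ such tuples. Summing over $d\mid n$ yields $n^b=\sum_{d\mid n}J_b(d)$, i.e.\ $J_b*\mathbf{1}=\mathrm{id}^b$ in the Dirichlet convolution algebra.

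Finally, passing to Dirichlet series, $\mathbf{1}$ contributes $\zeta(s)$ and $\mathrm{id}^b$ contributes $\zeta(s-b)$, so $D_{J_b}(s)\cdot\zeta(s)=\zeta(s-b)$, proving the reduced identity. Combining with the shift-by-$a$ observation completes the proof. There is no real obstacle here; the only minor point to be careful about is the domain of convergence, but since $J_b(n)\leq n^b$ the series converges absolutely for $\real(s)>a+b+1$, which is well within the half-plane where both $\zeta(s-a-b)$ and $\zeta(s-a)$ are given by their Euler products, so the formal manipulation is justified analytically.
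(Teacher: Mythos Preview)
Your proof is correct, but it proceeds along a different route from the paper's. The paper simply records that $J_b$ is multiplicative with $J_b(p^e)=p^{eb}(1-p^{-b})$ for prime powers $p^e$, and then reads off the local Euler factor of $\sum_n n^a J_b(n)n^{-s}$ as $(1-p^{a-s})/(1-p^{a+b-s})$, which is exactly $\zeta_p(s-a-b)/\zeta_p(s-a)$. Your argument instead establishes the global convolution identity $J_b*\mathbf{1}=\mathrm{id}^b$ by a direct combinatorial partition of $[n]^b$, and then passes to Dirichlet series. Both are short and standard; the paper's version is marginally quicker since it never needs the divisor-sum identity, while yours has the minor virtue of not invoking multiplicativity of $J_b$ at all (the convolution identity holds for every $n$ on the nose). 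Your reduction of the parameter $a$ to $0$ by the shift $s\mapsto s-a$ is also a clean simplification that the paper does not isolate. Your remark on the region of absolute convergence is correct and adequate.
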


\begin{proof}
This follows easily from the fact that, for a prime power $p^e$,
$e\in\N$, one has $J_b(p^e)=p^{eb}(1-p^{-b})$.
\end{proof}

We observe that, for $\bfG\in\{F_{n,\delta},G_{n}, H_{n}\}$ the zeta
function of the group $\bfG(\Z)$ is a finite product of factors of the
form $\zeta(s-a-b) / \zeta(s-a)$, for suitable pairs of
integers~$(a,b)$. By Lemma~\ref{lem:jordan}, the arithmetic function
$n\mapsto \widetilde{r}_{\bfG(\Z)}(n)$ may thus be described as a finite
convolution product of functions of the form $J_{a,b}$. Over number
rings, one may define analogues to the functions $J_{a,b}$ in terms of
tuples of coprime ideals, and hence obtain analogous expressions for
the zeta functions of groups of the form $\bfG(\Gri)$ as suitable
convolution products.

\section{Analogy with prehomogeneous vector spaces}\label{sec:pvs}

The local factors of representation zeta functions of groups of type
$F, G$ and $H$ studied in this paper bear a striking resemblance to
the zeta integrals associated to Igusa local zeta functions of certain
prehomogeneous vector spaces (PVS).

An irreducible PVS is a pair $(V,G)$, comprising an
$n$-di\-men\-sio\-nal complex vector space $V$ and a connected
algebraic subgroup $G$ of $\GL(V)$, acting irreducibly on $V$ with a
Zariski-dense $G$-orbit. Irreducible PVS were classified, up to a
certain equivalence relation, by Kimura and Sato in terms of
irreducible, so-called reduced PVS. Associated with an irreducible
reduced PVS $(G,V)$ there is a `relatively invariant' irreducible
polynomial $f(\bfx)\in\C[x_1,\dots,x_n]$ with the property that
$V\setminus f^{-1}(0)$ is the Zariski-dense $G$-orbit in~$V$. If
$f\in\lri[x_1,\dots,x_n]$ for a compact discrete valuation ring $\lri$
of characteristic zero then the integral
$$Z^\lri_f(s) = \int_{\lri^n} \vert f(x) \vert ^{s}\tud\mu$$ is known
as Igusa's local zeta function attached to~$f$. The real parts of the
poles of $Z^\lri_f(s)$ are known to be among the zeros of the
Bernstein-Sato polynomial $b_f(s)$ associated to~$f$. The polynomial
$b_f(s)$ provides a measure of the complexity of the singularities of
the hypersurface~$f^{-1}(0)$. This intriguing interpretation of the
real parts of poles of the integral $Z^\lri_f(s)$ is conjectured to
hold in a much more general context: If $f(x)\in K[x_1,\dots,x_n]$,
where $K$ is a number field, the Bernstein-Sato polynomial conjecture
states that, for almost all non-archimedean completions $\lfi$ of
$\Gfi$ with valuation ring~$\lri$, say, the real parts of the poles of
$Z^\lri_f(s)$ should be among the zeros of $b_f(s)$;
cf.~\cite{Kimura/03, Igusa/00} for further details on PVS, and
\cite[Section~7]{Denef/91} for details on the Bernstein-Sato
polynomial conjecture.

The list of irreducible reduced PVS in the appendix of
\cite{Kimura/03} starts off with three infinite families of
prehomogeneous vector spaces of generic matrices, viz.\ the vector
spaces $\Mat_n(\C), \Sym_n(\C)$ and $\Alt_{2n}(\C)$, respectively. The
associated relative invariants are $f(X)=\det(X), f(X)=\det(X)$ and
$f(X)=\Pfaff(X)$, respectively, where $\Pfaff(X)$ denotes the Pfaffian
of an antisymmetric matrix~$X$. Let $\lri$ be a complete discrete
valuation ring with residue field cardinality $q$. We continue to
write $n=2m+\varepsilon\in\N$ with $\varepsilon\in\{0,1\}$. The
following formulae for the associated Igusa zeta functions are well
known; see, for instance,~\cite[pp.\ 164, 163, 177]{Igusa/00}.

\begin{align}
 Z_{\Alt_{2n}(\lri)}(s) &:=
\int_{X\in\Alt_{2n}(\lri)}\vert\Pfaff(X)\vert^s\tud\mu =
\prod_{i=0}^{n-1} \frac{1-q^{-1-2i}}{1-q^{-s-1-2i}},
\label{pvs alt}\\ Z_{\Mat_{n}(\lri)}(s) &:=
\int_{X\in\Mat_{n}(\lri)}\vert\det(X)\vert^s\tud\mu =
\prod_{i=0}^{n-1}\frac{1-q^{-1-i}}{1-q^{-s-1-i}},\label{pvs mat}\\
Z_{\Sym_n(\lri)}(s) &:=
\int_{X\in\Sym_{n}(\lri)}\vert\det(X)\vert^s\tud\mu =
\frac{1-q^{-(1-\eps)(s+1)-n}}{1-q^{-s-1}}\prod_{i=0}^{m-1}\frac{1-q^{-1-2i}}{1-q^{-2s-3-2i}}.
\label{pvs sym}
\end{align}

We observe that, in analogy to Proposition~\ref{pro:H=F}, we have
\begin{equation}\label{equ:relation pfx}
Z_{\Sym_{2n}(\lri)}(s) = \frac{1-q^{-s-2n-1}}{1-q^{-s-1}} Z_{\Alt_{2n}(\lri)}(2s+2).
\end{equation}
We are not aware of a conceptional explanation for this identity.

\medskip
The group schemes $F_{n,\delta}$, $G_{n}$ and $H_{n}$ studied in this
paper are designed so that the commutator matrices associated to their
respective Lie rings reflect the prehomogeneous vector spaces
$\Alt_{2n}(\C)$, $\Mat_n(\C)$ and $\Sym_n(\C)$,
respectively. Theorem~\ref{thmABC:thm B} shows that the local
representation zeta functions associated to groups of type $F$, $G$
and $H$ closely resemble the $\mfp$-adic integrals~\eqref{pvs alt},
\eqref{pvs mat} and \eqref{pvs sym}, without being obtainable from
these integrals by simple transformations of variables. We record an
immediate consequence of Theorem~\ref{thmABC:thm B} regarding the
poles of the local zeta functions.
\begin{corollary}\label{cor:poles}
 Let $\mathbf{G}\in\{F_{n,\delta},G_{n},H_{n}\}$. There exists a
 finite set $P(\mathbf{G})$ of rational numbers such that the
 following holds. Given a ring of integers $\Gri$ of a number
 field~$K$, and for any non-zero prime ideal $\mfp$ of $\Gri$, we
 have \[ P(\mathbf{G})=\{\real(s)\mid s\in\C\text{ a pole of
 }\zeta_{\mathbf{G}(\Gri_\mfp)}(s)\}.\] More precisely, we have
\begin{align*}
P(F_{n,\delta}) & =\{2(n+i+\delta)-1\mid i\in[n-1]_{0}\},\\
P(G_{n}) & =\{n+i\mid i\in[n-1]_{0}\},\\
P(H_{n}) & =\{n,m+i+\varepsilon+1/2\mid i\in[m-1]_{0}\}.\end{align*}
\end{corollary}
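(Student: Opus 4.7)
The plan is to read off the poles directly from the explicit `local' formulae established in Section~\ref{sec:proof thm B}. Specifically, I will use Proposition~\ref{pro:Mult-formula-F} for type~$F$, Proposition~\ref{pro:Mult-formula-G} for type~$G$, and Proposition~\ref{pro:H=F} for type~$H$, which express $\zeta_{\bfG(\Gri_\mfp)}(s)$ as an explicit rational function in $q^{-s}$ whose numerator and denominator are products of factors of the form $(1-q^{a-bs})$ for various integers $a$ and positive integers~$b$. A factor $(1-q^{a-bs})$ in the denominator contributes a family of poles with real part $a/b$ (differing by integer multiples of $2\pi i/(b \log q)$), while a factor in the numerator contributes zeros with the same real part.

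For type $F$, Proposition~\ref{pro:Mult-formula-F} gives denominator $(q^{2(n+\delta)-1-s};q^2)_n = \prod_{i=0}^{n-1}(1-q^{2(n+i+\delta)-1-s})$, yielding candidate real poles at $\real(s)=2(n+i+\delta)-1$ for $i\in[n-1]_0$, and numerator $(q^{-s};q^2)_n=\prod_{i=0}^{n-1}(1-q^{2i-s})$, with zeros at $\real(s)=2i$. Since $2(n+i+\delta)-1\geq 2n-1 > 2(n-1)$, no zero cancels a pole, and we obtain the claimed set $P(F_{n,\delta})$. For type $G$, Proposition~\ref{pro:Mult-formula-G} similarly gives pole real parts $\{n, n+1, \dots, 2n-1\}$ and zero real parts $\{0, 1, \dots, n-1\}$, which are disjoint. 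For type $H$, Proposition~\ref{pro:H=F} produces denominator factors $(1-q^{n-s})$ and $\prod_{i=0}^{m-1}(1-q^{2(m+i+\varepsilon)+1-2s})$, giving real parts $n$ and $m+i+\varepsilon+1/2$, while the numerator factors $(1-q^{-s})$ and $\prod_{i=0}^{m-1}(1-q^{2(i+1)-2s})$ only produce zeros at the integers~$0,1,\dots,m$; the half-integer pole candidates therefore survive, and the integer candidate $n=2m+\varepsilon$ cannot cancel against any integer zero in $[0,m]$.

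The main things to justify carefully are thus: (i)~that the formulae of Propositions~\ref{pro:Mult-formula-F},~\ref{pro:Mult-formula-G} and~\ref{pro:H=F} are already reduced, i.e.\ no factor in the denominator is identically cancelled by a factor in the numerator, and (ii)~that the real parts of the poles form exactly (not merely are contained in) the listed sets. Part~(i) is immediate for types $F$ and $G$ since the factors are pairwise coprime as polynomials in $q^{-s}$ (real parts of zeros and poles lie in disjoint subsets of~$\R$). For type $H$ one additionally notes that the factor $(1-q^{n-s})$ is coprime to the factors coming from $Z_{\Alt_{2m}}$-like part because $n$ is an integer while $m+i+\varepsilon+1/2$ is not. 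Part~(ii) follows because every factor in the denominator genuinely contributes poles (the corresponding numerator factor with the same real part, if any, has already been ruled out above).

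Finally, independence of $\Gri$ and $\mfp$ is automatic from the form of the expressions: the exponents $a$ and $b$ in the factors $(1-q^{a-bs})$ depend only on $n, \delta, \varepsilon$ and $m$, and the set of real parts of zeros of $1-q^{a-bs}$ equals $\{a/b\}$ regardless of the residue field size~$q$. The main obstacle, such as it is, is the non-cancellation check of~(i), but as indicated this reduces to an elementary inspection of disjoint arithmetic progressions.
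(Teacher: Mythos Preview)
Your proposal is correct and takes essentially the same approach as the paper: the paper simply states that Corollary~\ref{cor:poles} is ``an immediate consequence of Theorem~\ref{thmABC:thm B}'', and you have spelled out precisely this reading-off of the poles from the local product formulae (equations~\eqref{pro:Mult-formula-F}, \eqref{pro:Mult-formula-G} and Proposition~\ref{pro:H=F}), together with the elementary check that the real parts of the numerator zeros and denominator zeros are disjoint so that no cancellation occurs. One trivial remark: the labels \texttt{pro:Mult-formula-F} and \texttt{pro:Mult-formula-G} refer to equations inside Proposition~\ref{pro:thm B type F and G}, not to separate propositions.
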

In other words, the set of real parts of poles of
$\zeta_{\mathbf{G}(\Gri_\mfp)}(s)$ is independent of $\mathcal{O}$
and~$\mfp$.  We note further that the sets $P(\bfG)$ defined in
Corollary~\ref{cor:poles} are obtained from the sets of real parts of
the poles of $\mfp$-adic integral associated to the corresponding PVS
by translation by the global abscissa of convergence of the relevant
representation zeta function. Indeed we have, by \eqref{pvs alt},
\eqref{pvs mat} and~\eqref{pvs sym}, Corollary~\ref{cor:poles} and
\eqref{equ:alphas}, that for all $\lri$
\begin{align*}
\{\real(s) \mid s\in\C \text{ a pole of
}Z_{\Alt_{2n}(\lri)}(s)\}&=P(F_{n,\delta})-\alpha(F_{n,\delta}),\\ \{\real(s) \mid s\in\C
\text{ a pole of
}Z_{\Mat_{n}(\lri)}(s)\}&=P(G_{n})-\alpha(G_{n}),\\ \{\real(s) \mid
s\in\C \text{ a pole of
}Z_{\Sym_n(\lri)}(s)\}&=P(H_{n})-\alpha(H_{n}).\\
\end{align*}

As mentioned above, the zeta integrals~\eqref{pvs sym}, \eqref{pvs
  mat} and \eqref{pvs sym} are examples of Igusa zeta functions which
are known to satisfy the Bernstein-Sato-polynomial conjecture. More
precisely, they are Igusa zeta functions with the property that the
real parts of the poles of $Z(s)$ are among the zeros of the
Bernstein-Sato polynomial $b_f(s)$, with pole multiplicities not
exceeding the multiplicities of the corresponding zeros. In the
cases~\eqref{pvs alt} and \eqref{pvs mat} these sets coincide. In the
case~\eqref{pvs sym}, however, the set of Bernstein-Sato zeros is
strictly larger: indeed, in this case we have
$b_f(s)=\prod_{i=0}^{n-1}(s+(i+2)/2)$. It strikes us as remarkable
that
$$\{s\in\C \mid b_f(s)=0\} = \{a(H_{n},i)/(n-i) \mid i\in[n-1]_0\} -
\alpha(H_{n}).$$ The rational numbers $a(H_{n},i)/(n-i)$ arise as
candidate real parts coming from terms of the form $X_i/(1-X_i)$, with
$X_i=q^{a(H_{n},i)-(n-i)s}$. We remark that the numbers that do not
give rise to real parts of poles come from the variables which cancel
in the transition from the additive to the multiplicative formulae;
cf.\ Lemma~\ref{lem:fibres}.

It would be interesting to associate other irreducible prehomogeneous
vector spaces with finitely generated nilpotent groups. We are
presently not aware of any other irreducible reduced prehomogeneous
vector spaces whose geometry is reflected in this way in
representation zeta functions of $\T$-groups.

\begin{acknowledgements}
  This research was supported by EPSRC grant EP/F044194/1. We are
  grateful to the referee who pointed out a mistake in an earlier
  version of this paper, and to Tobias Ro\ss mann who helped us fix
  it.
\end{acknowledgements}


\providecommand{\bysame}{\leavevmode\hbox to3em{\hrulefill}\thinspace}
\providecommand{\MR}{\relax\ifhmode\unskip\space\fi MR }
\providecommand{\MRhref}[2]{%
  \href{http://www.ams.org/mathscinet-getitem?mr=#1}{#2}
}
\providecommand{\href}[2]{#2}

\end{document}